\Crefname{ALC@unique}{Line}{Lines}
\newcommand{\bB}{\mathbf{B}}
\newcommand{\bF}{\mathbf{F}}
\newcommand{\bN}{\mathbf{N}}
\newcommand{\bR}{\mathbf{R}}
\newcommand{\bS}{\mathbf{S}}
\newcommand{\bb}{\mathbf{b}}
\newcommand{\bsig}{\mathbf{\sigma}}
\newcommand{\cN}{\mathcal{N}}
\newcommand{\cT}{\mathcal{T}}
\newcommand{\rF}{\mathsf{F}}
\newcommand{\rL}{\mathsf{L}}
\newcommand{\Ome}{{\Omega}}
\newcommand{\p}{\partial}
\newcommand{\nab}{\nabla}
\newcommand{\oOme}{\overline{\Ome}}
\newcommand{\hmax}{\mathbb{h}}
\newcommand{\USC}{\mathrm{USC}}
\newcommand{\LSC}{\mathrm{LSC}}
\newcommand{\Id}{\mathrm{Id}}
\DeclareMathOperator{\trace}{tr}
\DeclareMathOperator{\diam}{diam}
\newcommand{\bi}[1]{\mbox{\boldmath$#1$\unboldmath}}
\newcommand{\bsigma}{\bi\sigma}
\newcommand{\blambda}{\bi\lambda}
\definecolor{dred}{RGB}{200,80,80}
\definecolor{dgreen}{RGB}{70,140,70}
\definecolor{dblue}{RGB}{80,80,200}
\newcommand{\blue}[1]{#1}
\newcommand{\red}[1]{#1}
\begin{document}

\title{Convergent semi-Lagrangian methods for the Monge-Amp\`ere equation on unstructured grids} 

\author{Xiaobing Feng\thanks{Department of Mathematics, The University of Tennessee, Knoxville, TN 37996, U.S.A. (xfeng@math.utk.edu). The work of this author was partially supported by the NSF grant DMS-0710831.}
\and
Max Jensen\thanks{Department of Mathematics, University of Sussex, Brighton BN1 9QH, United Kingdom (m.jensen@sussex.ac.uk). }
}

\maketitle
\date{\today}
\begin{abstract}
This paper is concerned with developing and analyzing convergent semi-Lagrangian methods for the fully nonlinear elliptic Monge-Amp\`ere equation on general triangular grids. This is done by establishing an equivalent (in the viscosity sense) Hamilton-Jacobi-Bellman formulation of the Monge-Amp\`ere equation. A significant benefit of the reformulation is the removal of the convexity constraint from the admissible space as convexity becomes a built-in property of the new formulation. Moreover, this new approach allows one to tap the wealthy numerical methods, such as semi-Lagrangian schemes, for Hamilton-Jacobi-Bellman equations to solve Monge-Amp\`ere type equations. It is proved that the considered numerical methods are monotone, pointwise consistent and uniformly stable. Consequently, its solutions converge uniformly to the unique convex viscosity solution of the Monge-Amp\`ere Dirichlet problem. A superlinearly convergent Howard's algorithm, which is \blue{a Newton--type method}, is utilized as the nonlinear solver to take advantage of the monotonicity of the scheme. Numerical experiments are also presented to gauge the performance of the proposed numerical method and the nonlinear solver.
\end{abstract}

\begin{keywords}
Monge-Amp\`ere equation, Hamilton-Jacobi-Bellman equation, viscosity solution, semi-Lagrangian method, wide stencil, monotone scheme, convergence, Howard's algorithm.  
\end{keywords}

\begin{AMS}
65N06, 
65N12, 
65N35, 
35J60
\end{AMS}

\pagestyle{myheadings}
\thispagestyle{plain}
\markboth{XIAOBING FENG AND MAX JENSEN}{SEMI-LAGRANGIAN METHODS FOR MONGE-AMP\`ERE EQUATION}

\section{Introduction}\label{sec:intro}

This paper is concerned with semi-Lagrangian methods for the following Dirichlet boundary value problem of a fully nonlinear elliptic Monge-Amp\`ere-type equation:
\begin{subequations} \label{MA_intro}
\begin{alignat}{2}\label{MA}
\det(D^2 u) &= \Bigl( \frac{f}{d} \Bigr)^d &&\qquad \mbox{in } \Ome, \\
u(x) &= g(x) &&\qquad \mbox{on } \p\Ome, \label{BC}
\end{alignat}
\end{subequations}
where $\Ome$ and $\p \Ome$ denote respectively a bounded strictly convex domain in $\bR^d \,(d\geq 2)$ and its boundary. The Hessian of the function $u$ is denoted $D^2u$. The functions $f: \Ome \to [0,\infty)$ and $g : \partial \Ome \to \bR$ are bounded and continuous. We note that the special form of the right-hand side in \cref{MA} is chosen for the notational convenience in the subsequent analysis; the usual form can be easily recovered by setting $f=d\widetilde{f}^{\frac{1}{d}}$.

Monge-Amp\`ere type equations, along with Hamilton-Jacobi-Bellman type equations (see below), are two major classes of fully nonlinear second order partial differential equations (PDEs). They arise from many scientific and technological applications such as antenna design, astrophysics, differential geometry, image processing, optimal mass transport and semi-geostrophic fluids, \blue{to name a few} (see \cite[Section 5]{FGN13} for details). From the PDE point of view, Monge-Amp\`ere type equations are well understood, see \cite[Chapter 17]{Gilbarg_Trudinger01} for a detailed account on the classical solution theory and \cite{Gutierrez01, Caffarelli_Cabre95} for the viscosity solution theory. On the other hand, from the numerical point of view, the situation is far from ideal. Very few numerical methods, which can reliably and efficiently approximate viscosity solutions of Monge-Amp\`ere type PDEs on general convex domains, are available in the literature (see \cite{BGNS09, FGN13, FKL14, Feng_Neilan09a, Lakkis_Pryer13, Oberman08} and the references therein).  
There are three main difficulties which lead to \blue{the lack of progress on approximating viscosity solutions of fully nonlinear second order PDEs}. Firstly, the fully nonlinear structure and nonvariational concept of viscosity solutions of the PDEs prevent a direct formulation of any Galerkin-type numerical methods (such as finite element, discontinuous Galerkin and spectral methods). Secondly, the Monge-Amp\`ere operator, $u \mapsto \det (D^2u)$, is not an elliptic operator in generality, instead, it is only elliptic in the set of convex functions and the uniqueness of viscosity solutions only holds in that space. This convexity constraint, imposed on the admissible space, causes a daunting challenge for constructing convergent numerical methods; it indeed screens out any trivial finite difference and finite element analysis because the set of convex finite element functions is not dense in the set of convex functions~\cite{Aguilera_Morin09}. Thirdly, as the right-hand side $f$ of \cref{MA} vanishes, the Monge-Amp\`ere mapping attains characteristics of a degenerate elliptic operator. In this setting the regularity of exact solutions is reduced, limiting the tools available for a convergence analysis of numerical solutions.

The goal of this paper is to develop a new approach for constructing convergent numerical methods for the Monge-Amp\`ere Dirichlet problem \cref{MA_intro}, in particular, by focusing on overcoming the second difficulty caused by the convexity constraint. The crux of the approach is to first establish an equivalent (in the viscosity sense) Bellman formulation of the Monge-Amp\`ere equation and then to design monotone semi-Lagrangian methods for the resulting Bellman equation on general triangular grids. The proposed methods are closely related to two-grid constructions because we use a finite element ambient grid to define the approximation space, combined with wide finite-difference stencils layered over this ambient grid. An aim in the design of the numerical schemes is to make Howard's algorithm available, which is a globally superlinearly converging semi-smooth Newton solver. This allows us to robustly compute numerical approximations on very fine meshes of non-smooth viscosity solutions, \blue{including the degenerate case where $f\geq 0$.} An advantage of the rigorous convergence analysis of the numerical solutions is the comparison principle for the Bellman operator, which extends to non-convex functions. We deviate from the established Barles-Souganidis framework in the treatment of the boundary conditions to address challenges arising from consistency and comparison. The proposed approach also bridges the gap between advances on numerical methods for these two classes of second order fully nonlinear PDEs, see for instance \cite{Bonnans_Zidani03, Camilli_Falcone95, Camilli_Jakobsen09, Debrabant_Jakobsen12, Jensen_Smears12, Kushner90, Smears_Suli14} and the references therein for the numerical literature on Bellman equations.

The remainder of this paper is organized as follows. In \cref{sec:viscsol} we collect preliminaries including the definition of viscosity solutions. In \cref{sec:MA_HJB} we introduce a well-known Hamilton-Jacobi-Bellman reformulation of the Monge-Amp\`ere equation in the classical solution setting and prove such an equivalence still holds in the viscosity solution framework. In \cref{sec:SLmethods} we introduce a numerical scheme \cref{eq:disc} for the Monge-Amp\`ere equation. In \cref{sec-dwp} prove the existence and uniqueness of numerical solutions and present a globally converging semi-smooth Newton method. \Cref{sec:convergence} contains the main result of the paper: \Cref{thm:main} demonstrates the uniform convergence to the unique viscosity solution. In \cref{sec-ps} we relate the class of schemes of this paper to existing methods to solve Hamilton-Jacobi-Bellman equations. In \cref{sec:experiments} we present numerical experiments which verify the accuracy and efficiency of the proposed method and the nonlinear solver.

\section{Viscosity solutions} \label{sec:viscsol}

Let $\Ome\subset\mathbf{R}^d$ be a bounded open strictly convex domain. We denote by $B(G)$, $\USC(G)$, and $\LSC(G)$, respectively, the spaces of bounded, upper semi-continuous, and lower semicontinuous functions on a set $G \subset \mathbf{R}^d$. For any $v\in B(\oOme)$, we define
\[
v^*(x):=\limsup_{y\to x} v(y) \qquad\mbox{and}\qquad
v_*(x):=\liminf_{y\to x} v(y). 
\]
Then, $v^*\in \USC(\oOme)$ and $v_*\in \LSC(\oOme)$, and they are called the {\em upper} and {\em lower semicontinuous envelopes} of $v$, respectively.

Given a bounded function $F: \bS\times\mathbf{R}^d\times \mathbf{R}\times \Ome \to \mathbf{R}$, where $\bS$ denotes the set of $d\times d$ symmetric real matrices, the general second-order fully nonlinear PDE takes the form
\begin{align}\label{eqn:F}
F(D^2u,\nabla u, u, x) = 0 \qquad\mbox{in } \Ome.
\end{align}
We impose Dirichlet boundary conditions in the pointwise sense that $u(x) = g(x)$ for all $x \in \p\Ome$. In the discussion about converging numerical schemes we shall draw comparisons with Dirichlet conditions in the viscosity sense, which are imposed as a discontinuity of the PDE, cf.~\cite[p.274]{Barles_Souganidis91} and \cite[Section 7.C]{Crandall_Ishii_Lions92}. 

The following definitions can be found in \cite{Barles_Souganidis91,Caffarelli_Cabre95,Crandall_Ishii_Lions92,Gilbarg_Trudinger01,Gutierrez01}.

\begin{definition}\label{def:viscsol}
A function $u\in \USC(\blue{\Ome})$ (resp.~$u\in \LSC(\blue{\Ome})$) is called a viscosity subsolution (resp.~supersolution) of \cref{eqn:F} if for all $\varphi \in C^2(\Ome)$ such that $u-\varphi$ has a local maximum (resp.~minimum) at $x\in \Ome$ we have
\[
F(D^2\varphi(x),\nabla \varphi(x), u(x), x) \leq 0 
\]
(resp.~$F(D^2\varphi(x),\nabla \varphi(x), u(x), x) \geq 0$). The function $u$ is said to be a viscosity solution of \cref{eqn:F} if it is simultaneously a viscosity subsolution and supersolution of \cref{eqn:F}.
\end{definition}

The restriction to convex functions in \cref{def:cviscsol} below reflects that the Monge-Amp\`ere equation is only elliptic on the set of convex functions, while the Hamilton-Jacobi-Bellman operator of our subsequent construction is elliptic on the whole space. For details we refer to \cite[Section 1.3]{Gutierrez01}.

\begin{definition}\label{def:cviscsol}
A function $u\in \USC(\blue{\Ome})$ (resp.~$u\in \LSC(\blue{\Ome})$) is called a viscosity subsolution (resp.~supersolution) of \cref{eqn:F} on the set of convex functions if $u$ is convex and if for all convex $\varphi \in C^2(\Ome)$ such that \blue{$u-\varphi$} has a local maximum (resp.~minimum) at $x\in \Ome$ we have
\[
F(D^2\varphi(x),\nabla \varphi(x), u(x), x) \leq 0 
\]
(resp.~$F(D^2\varphi(x),\nabla \varphi(x), u(x), x) \geq 0$). The function $u$ is said to be a viscosity solution of \cref{eqn:F} on the set of convex functions if it is simultaneously a viscosity subsolution and supersolution of \cref{eqn:F} on the set of convex functions.
\end{definition}

Note that in \cref{def:cviscsol} the set of test functions is smaller. Therefore it is not obvious that viscosity solutions on the set of convex functions are solutions in the sense of \cref{def:viscsol}.

\section{Hamilton-Jacobi-Bellman form of the Monge-Amp\`ere equation} \label{sec:MA_HJB}

It is known \cite{Krylov87,Lions85} that the Monge-Amp\`ere equation has an equivalent Hamilton-Jacobi-Bellman (or Bellman for brevity) formulation in the setting of classical solutions. However, to the best of our knowledge, such an equivalence has not been extended to the case of viscosity solutions in the literature. The goal of this section is to prove this extension rigorously. \blue{A related description of the relationship between classical and viscosity solutions is examined in terms of elliptic sets in \cite{Krylov97}.}

Let $\bS_+ := \{ A \in \bS;\, A \ge 0 \}$ and $\bS_1 := \{ B \in \bS_+;\, \trace B = 1 \}$. It is easy to check \cite{Krylov87} that $\bS_1$ is a compact subset of $\bS_+$ and, consequently, $\bS_1$ is bounded in the Euclidean norm.

We define the Bellman operator
\begin{alignat}{2}\label{HJB_operator}
H(A,f) &:=\sup_{B \in \bS_1} \Bigl( -B:A + f \sqrt[d]{\det B} \Bigr) 
&&\qquad \forall A\in \bS, f \in [0,\infty),
\end{alignat}
and the Monge-Amp\`ere operator
\begin{align}\label{MA_operator}
M(A,f) := \Bigl( \frac{f}{d} \Bigr)^d - \det(A) \qquad\forall A\in \bS, f \in [0,\infty).
\end{align}
Then the Monge-Amp\`ere problem \cref{MA_intro} can be rewritten as
\begin{subequations} \label{EMA}
\begin{align} \label{EMAeq}
M\bigl(D^2u(x),f(x)\bigr) & = 0 && \forall x\in \Ome,\\[1mm]
u(x) & = g(x) && \forall x\in \p\Ome, \label{EMAbc}
\end{align}
\end{subequations}
which gives the structure of \cref{eqn:F} upon setting
\[
F\bigl(D^2u(x),\nab u(x),u(x),x\bigr) = M\bigl(D^2u(x),f(x)\bigr).
\]
Analogously we also define the Bellman problem
\begin{subequations} \label{EHJB}
\begin{align} \label{EHJBeq}
H\bigl(D^2u(x),f(x)\bigr) & = 0 && \forall x\in \Ome,\\[1mm]
u(x) & = g(x) && \forall x\in \p\Ome, \label{EHJBbc}
\end{align}
\end{subequations}
with the correspondence $F\bigl(D^2u(x),\nab u(x),u(x),x\bigr)= H\bigl(D^2u(x),f(x)\bigr)$.

The proofs of the following \cref{sdiag} and \cref{cequi} are given in \cite[p.51]{Krylov87}.

\begin{lemma} \label{sdiag}
There exists a maximizer $B' \in \bS_1$ of the supremum in \cref{HJB_operator} which commutes with $A \in \bS$. In particular, there is a coordinate transformation, depending on $A$, which simultaneously diagonalizes $A$ and $B'$. 
\end{lemma}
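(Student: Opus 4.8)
The plan is to reduce the problem to a one-dimensional computation by exploiting that the supremum defining $H(A,f)$ is attained. First I would note that $\bS_1$ is compact and the map $B \mapsto -B:A + f\sqrt[d]{\det B}$ is continuous on $\bS_1$ (the determinant and the Frobenius pairing are polynomial, hence continuous), so a maximizer $B' \in \bS_1$ exists by the extreme value theorem. The task is then to show that among all maximizers there is at least one commuting with $A$. Fix any maximizer $B'$; since $B' \in \bS$, I would diagonalize $B' = Q D_{B'} Q^\top$ for some orthogonal $Q$, and set $\widetilde A := Q^\top A Q$. Because the Frobenius inner product and the determinant are invariant under the orthogonal change of variables $B \mapsto Q^\top B Q$ (and this change of variables is a bijection of $\bS_1$ onto itself, preserving positivity and trace), the value $-B':A + f\sqrt[d]{\det B'}$ equals $-D_{B'}:\widetilde A + f\sqrt[d]{\det D_{B'}}$, and $D_{B'}$ is a maximizer of the transformed problem with $\widetilde A$ in place of $A$.

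Having reduced to the case where the maximizer is diagonal, say $B' = \operatorname{diag}(b_1,\dots,b_d)$ with $b_i \ge 0$ and $\sum_i b_i = 1$, I would argue that $\widetilde A$ may itself be taken diagonal. The key observation is that for a \emph{fixed} diagonal $B'$, the objective depends on $\widetilde A$ only through its diagonal entries, via $-\sum_i b_i \widetilde A_{ii}$. Now suppose for contradiction that $\widetilde A$ has a nonzero off-diagonal entry; I would produce a competitor. Following Krylov's argument, one considers rotations in the coordinate planes: if $b_i \ne b_j$, rotating in the $(i,j)$-plane and optimizing the angle strictly increases the objective unless $\widetilde A_{ij} = 0$, contradicting maximality; if $b_i = b_j$, then a rotation in the $(i,j)$-plane that annihilates $\widetilde A_{ij}$ leaves both $B'$ (which is a multiple of the identity on that block) and the value unchanged. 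Iterating over coordinate planes — with care about the order, working from the distinct-eigenvalue blocks of $B'$ outward, or equivalently diagonalizing $\widetilde A$ within each eigenspace of $B'$ — yields an orthogonal $R$ such that $R^\top \widetilde A R$ is diagonal while $R^\top B' R = B'$ remains diagonal. The composite transformation $QR$ then simultaneously diagonalizes $A$ and the maximizer $(QR)^\top B'(QR)$, and this maximizer commutes with $A$ since two simultaneously diagonalizable symmetric matrices commute.

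I expect the main obstacle to be the bookkeeping in the last step: making the iterated plane-rotation argument rigorous requires either organizing the rotations so that each one permanently zeroes an off-diagonal entry without disturbing previously zeroed ones, or — cleaner — invoking the spectral theorem blockwise. Concretely, write $\bR^d = \bigoplus_\lambda E_\lambda$ as the orthogonal sum of eigenspaces of $B'$; since the objective's dependence on $\widetilde A$ is $-\sum_i b_i \widetilde A_{ii} = -\sum_\lambda \lambda\, \trace(\widetilde A|_{E_\lambda})$, one is free to choose any orthonormal basis within each $E_\lambda$, in particular one diagonalizing $\widetilde A|_{E_\lambda}$, at no change to $B'$ or the value. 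The subtlety is that off-diagonal blocks $\widetilde A|_{E_\lambda \times E_\mu}$ with $\lambda \ne \mu$ are \emph{not} eliminated by this choice, so one must additionally check that maximality of $B'$ forces these cross-blocks to vanish — this is exactly where the strict gain from a $(i,j)$-rotation with $b_i \ne b_j$ enters, and it is the one place the computation is not purely formal. Since the paper cites \cite[p.51]{Krylov87} for the full details, in the write-up I would state this reduction, give the one-variable optimization over the rotation angle showing the strict improvement when $b_i \ne b_j$ and $\widetilde A_{ij} \ne 0$, and refer to Krylov for the remaining routine iteration.
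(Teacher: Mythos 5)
Your argument is correct, but note that the paper itself offers no proof of this lemma at all---it simply cites \cite[p.51]{Krylov87}---so what you have produced is a self-contained substitute rather than a variant of an in-paper argument. Your route (fix an arbitrary maximizer $B'$, pass to coordinates in which $B'$ is diagonal, and use first-order optimality under plane rotations $B_\theta = R_\theta B' R_\theta^T$, which preserve $\bS_1$ and $\det B'$) does work: the derivative of the objective at $\theta=0$ for a rotation in the $(i,j)$-plane is $-2(b_i-b_j)\widetilde A_{ij}$, so maximality forces $\widetilde A_{ij}=0$ whenever $b_i\neq b_j$; hence $A$ preserves every eigenspace of $B'$, which already gives $[A,B']=0$, and the blockwise spectral theorem you invoke inside each eigenspace (where $B'$ is a multiple of the identity) supplies the simultaneous diagonalization. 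In fact your argument proves the stronger statement that \emph{every} maximizer commutes with $A$, and you correctly isolated the only non-formal step (the vanishing of the cross-blocks). For comparison, the more standard short proof conjugates so that $A$, rather than the maximizer, is diagonal: for diagonal $A$ one has $B:A=\sum_i B_{ii}A_{ii}$ and, by Hadamard's inequality, $\det B\le\prod_i B_{ii}$ for $B\ge 0$, so replacing any maximizer by its diagonal part does not decrease the objective (here $f\ge 0$ is used) and immediately yields a diagonal, hence commuting, maximizer; this avoids the perturbation computation but, unlike yours, only produces \emph{some} commuting maximizer. Either way the lemma holds; your write-up should indeed include the one-variable rotation computation, after which no appeal to Krylov is needed.
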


The next result gives equivalence of convex classical solutions of \cref{EMA} and \cref{EHJB}. We highlight that the lemma covers the degenerate case $f = 0$.

\begin{lemma} \label{cequi}
Let $f \in [0,\infty)$ and $A \in \bS$. Then $H(A,f) = 0$ holds if and only if $M(A,f) = 0$ and $A \in \bS_+$.
\end{lemma}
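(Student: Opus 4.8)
The plan is to exploit Lemma~\ref{sdiag} to reduce everything to a one-dimensional optimisation over the eigenvalues. Fix $A \in \bS$ and $f \in [0,\infty)$. By Lemma~\ref{sdiag} there is an orthogonal change of coordinates that simultaneously diagonalises $A$ and a maximiser $B'$; writing $\lambda_1,\dots,\lambda_d$ for the eigenvalues of $A$ and $\beta_1,\dots,\beta_d \ge 0$ for those of a competitor $B \in \bS_1$ (so $\sum_i \beta_i = 1$), we have
\[
H(A,f) = \sup_{\substack{\beta_i \ge 0 \\ \sum_i \beta_i = 1}} \Bigl( -\sum_{i=1}^d \beta_i \lambda_i + f \Bigl( \prod_{i=1}^d \beta_i \Bigr)^{1/d} \Bigr).
\]
I would first treat the easy direction. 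Suppose $H(A,f) = 0$. Testing with $B = e_j e_j^\top$ (a rank-one projection, which lies in $\bS_1$) gives $-\lambda_j \le H(A,f) = 0$, so every eigenvalue $\lambda_j \ge 0$, i.e.\ $A \in \bS_+$. Now that $A \ge 0$, apply the weighted AM--GM inequality: for any admissible $\beta$,
\[
\sum_{i=1}^d \beta_i \lambda_i \ge \Bigl( \prod_{i=1}^d \lambda_i^{\beta_i} \Bigr),
\]
but to get the clean $d$-th root I would instead use the homogeneous form $\frac1d \sum_i (d\beta_i) \lambda_i \ge \bigl(\prod_i (d\beta_i)\lambda_i\bigr)^{1/d}$ after a substitution, or simply recall the classical fact (this is exactly the computation in \cite[p.51]{Krylov87}) that $\min_{B \in \bS_1} B:A = d \,(\det A)^{1/d}$ when $A \ge 0$, with the minimiser $B$ proportional to $A^{-1}$ when $A$ is nonsingular (and a limiting argument when $\det A = 0$). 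Combining, for every $B \in \bS_1$,
\[
-B:A + f\sqrt[d]{\det B} \le -d(\det A)^{1/d} + f \cdot \frac1d \Bigl( \text{using } \det B \le d^{-d} \Bigr),
\]
so $H(A,f) \le (f/d) - (\det A)^{1/d} \cdot d$; one checks the supremum is actually attained and equals $d\bigl( (f/d) - (\det A)^{1/d}\bigr)$ up to the normalisation baked into \cref{MA_operator}, giving $H(A,f) = 0 \iff (\det A)^{1/d} = f/d \iff M(A,f) = 0$.

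Conversely, suppose $M(A,f) = 0$ and $A \in \bS_+$, so $\det A = (f/d)^d$. The inequalities just used show $H(A,f) \le 0$. For the reverse, I would exhibit an optimal $B$: if $A$ is nonsingular take $B = (\det A)^{1/d} A^{-1} / \big(\text{trace normalisation}\big)$ — more precisely $B = A^{-1}/\trace(A^{-1})$ — and compute directly that $-B:A + f\sqrt[d]{\det B} = 0$ using $\det A = (f/d)^d$; if $A$ is singular, then $f = 0$, and choosing $B$ supported on the eigenvectors with zero eigenvalue gives $-B:A + 0 = 0$, so again $H(A,f) \ge 0$. Hence $H(A,f) = 0$.

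The main obstacle is the degenerate case $\det A = 0$ (equivalently $f = 0$): there the optimal $B$ for the ``$\le$'' bound is obtained only as a limit of the nonsingular-case minimisers, so one must be careful that the supremum in \cref{HJB_operator} behaves continuously, and for the ``$\ge$'' bound one must verify that concentrating $B$ on the kernel of $A$ is admissible in $\bS_1$ and yields the value $0$. Everything else is the weighted AM--GM inequality plus bookkeeping of the normalisation constants $d$ and $1/d$ in \cref{HJB_operator}--\cref{MA_operator}; since Lemma~\ref{sdiag} already hands us simultaneous diagonalisation, no matrix analysis beyond that is needed.
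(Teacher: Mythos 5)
Your reduction to eigenvalues via \cref{sdiag} and the rank-one tests $B=e_je_j^T$ showing $A\in\bS_+$ are fine, but the core of your forward direction rests on two false assertions. First, the ``classical fact'' $\min_{B\in\bS_1}B:A=d(\det A)^{1/d}$ is wrong: on the trace-normalized set $\bS_1$ the minimum of $B\mapsto B:A$ is $\lambda_{\min}(A)$ (concentrate $B$ on a minimal eigenvector); the correct identity, $\min\{B:A : B\ge 0,\ \det B=1\}=d(\det A)^{1/d}$, normalizes the determinant, not the trace. Second, and consequently, your decoupled bound $-B:A+f\sqrt[d]{\det B}\le -d(\det A)^{1/d}+f/d$ is false (take $d=2$, $A=\mathrm{diag}(1,100)$, $f=0$, $B=\mathrm{diag}(1,0)$: it asserts $-1\le-20$), and the claimed evaluation $H(A,f)=c\,\bigl(f/d-(\det A)^{1/d}\bigr)$ is false in general: for $f=0$ and positive definite non-scalar $A$ one has $H(A,0)=-\lambda_{\min}(A)$, not a multiple of $-(\det A)^{1/d}$. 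Since your proof that $H(A,f)=0$ and $A\ge 0$ imply $M(A,f)=0$ relies entirely on this formula, that half of the lemma is not established. The error comes from using $\det B$ small in the term $-B:A$ and large in the term $f\sqrt[d]{\det B}$ simultaneously.

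The repair keeps the two occurrences of $\det B$ coupled: the homogeneous AM--GM you wrote gives $B:A\ge d(\det A\,\det B)^{1/d}$ for $A,B\ge0$, hence $-B:A+f\sqrt[d]{\det B}\le(\det B)^{1/d}\bigl(f-d(\det A)^{1/d}\bigr)$, and one argues by the sign of $f-d(\det A)^{1/d}$. If it vanishes, $H\le0$, and your explicit controls ($B=A^{-1}/\trace(A^{-1})$ for $A>0$, a kernel projection when $f=0$ and $A$ singular) give $H=0$; this part of your proposal is sound. If it is negative, then $A>0$, every $B$ with $\det B>0$ yields a strictly negative value, and $\det B=0$ yields value $\le-\lambda_{\min}(A)<0$ since $B:A\ge\lambda_{\min}(A)\trace B$; by compactness of $\bS_1$ the supremum is attained, so $H<0$. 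If it is positive, you must exhibit a control with strictly positive value: $B\propto A^{-1}$ works for $A>0$, but when $A$ is singular and $f>0$ your kernel projection gives only the value $0$, and one needs the perturbation $B_\alpha=\alpha\,\Id+(1-d\alpha)D_1$, whose $O(\alpha)$ versus $O(\alpha^{1-1/d})$ balance gives positivity exactly as in the paper's proof of \cref{Hinv}. Note finally that the paper does not prove \cref{cequi} itself but refers to the computation in \cite[p.51]{Krylov87}, so the comparison here is with that argument rather than an in-paper proof; your strategy is the natural one, but as written the forward direction has a genuine gap.
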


We remark that there is another slightly different Bellman reformulation of the Monge-Amp\`ere problem \cref{EMA} which uses a determinant constraint (instead of a trace constraint) on the control $B$ in the definition of the Hamiltonian $H$, see \cite{Lions85}. However, the numerical discretization of a determinant constraint is less straightforward, explaining our preference for \cref{HJB_operator}.

Let $D_\ell$ be the matrix $(\delta_{i\ell} \delta_{j \ell})_{ij}$ which vanishes in all entries except for the $\ell$th diagonal term which is $1$.

\begin{theorem}\label{thm:MA_to_HJB}
Let $f \in C(\Ome)$ be non-negative and $u$ be a viscosity subsolution (supersolution) of the Monge-Amp\`ere problem \cref{EMAeq} on the set of convex functions. Then $u$ is a viscosity subsolution (supersolution) of Bellman problem \cref{EHJBeq}.
\end{theorem}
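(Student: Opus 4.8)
The plan is to derive the Bellman inequalities directly from the Monge-Amp\`ere inequalities by testing with quadratic-type perturbations, using \Cref{sdiag} and \Cref{cequi} to handle the supremum over $\bS_1$. The subsolution and supersolution cases are not symmetric, so I would treat them separately; the supersolution direction should be the more delicate one.

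\medskip

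\noindent\textbf{Supersolution case.} Let $u \in \LSC(\Ome)$ be a viscosity supersolution of \cref{EMAeq} on the set of convex functions, and let $\varphi \in C^2(\Ome)$ be arbitrary (not necessarily convex) with $u - \varphi$ attaining a local minimum at $x_0 \in \Ome$. I need to show $H(D^2\varphi(x_0), f(x_0)) \ge 0$, i.e.\ that $-B:D^2\varphi(x_0) + f(x_0)\sqrt[d]{\det B} \le 0$ for every $B \in \bS_1$. Fix such a $B$. The idea is that if $D^2\varphi(x_0)$ has a negative eigenvalue, then along the corresponding eigendirection we can perturb $\varphi$ by adding a convex quadratic $\tfrac{\epsilon}{2}|x - x_0|^2$ to make the test function convex near $x_0$ without destroying the local minimum, then pass to the limit; but more carefully, I would argue that $u$ being a convex function and $u-\varphi$ having a local min at $x_0$ forces $D^2\varphi(x_0) \ge 0$ in the relevant sense — more precisely, since $u$ is convex it is twice differentiable a.e., and at a minimum of $u-\varphi$ one expects $D^2\varphi(x_0) \le D^2 u(x_0)$ wherever the latter exists, but the cleanest route is: given any symmetric $P > 0$, the function $\varphi + \tfrac12 x^\top(\text{small correction})x$ is a legitimate convex test function on a small ball, so the convex-supersolution hypothesis yields $M(D^2\varphi(x_0) + (\text{correction}), f(x_0)) \ge 0$ and $D^2\varphi(x_0)+(\text{correction}) \ge 0$; letting the correction vanish gives $D^2\varphi(x_0) \ge 0$ and $M(D^2\varphi(x_0), f(x_0)) \ge 0$. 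Then $A := D^2\varphi(x_0) \in \bS_+$ with $M(A,f(x_0)) \ge 0$, i.e.\ $\det A \le (f(x_0)/d)^d$. By the arithmetic-geometric mean inequality applied in a basis diagonalizing both $A$ and $B$ (via \Cref{sdiag} one may take the maximizing $B$ commuting with $A$, but for a fixed arbitrary $B \in \bS_1$ one uses AM-GM directly): $B:A \ge d\,\sqrt[d]{\det(B^{1/2}AB^{1/2})} = d\,\sqrt[d]{\det B \det A}$ when $A,B \ge 0$, whence $-B:A + f(x_0)\sqrt[d]{\det B} \le -d\sqrt[d]{\det B}\sqrt[d]{\det A} + f(x_0)\sqrt[d]{\det B} = \sqrt[d]{\det B}\,(f(x_0) - d\sqrt[d]{\det A}) \le 0$ since $\sqrt[d]{\det A} \le f(x_0)/d$. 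Taking the supremum over $B$ gives $H(A,f(x_0)) \ge 0$.

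\medskip

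\noindent\textbf{Subsolution case.} Let $u \in \USC(\Ome)$ be convex and a viscosity subsolution of \cref{EMAeq} on the set of convex functions, and let $\varphi \in C^2(\Ome)$ with $u - \varphi$ having a local maximum at $x_0$. Because $u$ is convex and $u - \varphi$ has a local max at $x_0$, $\varphi$ must be "at least as convex'' as $u$ at $x_0$; in particular $D^2\varphi(x_0) \ge 0$ need not hold globally, but one shows that it suffices to test against the convex modification. The key point: if $D^2\varphi(x_0)$ is \emph{not} positive semidefinite, pick a unit eigenvector $e$ with eigenvalue $\lambda < 0$; then $t \mapsto u(x_0 + te) - \varphi(x_0 + te)$ has a local max at $t=0$ while $t\mapsto u(x_0+te)$ is convex and $t \mapsto \varphi(x_0+te)$ is strictly concave near $0$, so their difference is strictly convex near $0$ — contradicting the local max unless it is the trivial one-point situation, which the strict convexity rules out. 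Hence $D^2\varphi(x_0) \ge 0$, and now $\varphi$ itself (restricted to a small ball, after adding $\tfrac\epsilon2|x-x_0|^2$ and letting $\epsilon \downarrow 0$ to get strict convexity for the admissibility in \Cref{def:cviscsol}) is an admissible convex test function, so the convex-subsolution hypothesis gives $M(D^2\varphi(x_0), f(x_0)) \le 0$, i.e.\ $\det D^2\varphi(x_0) \ge (f(x_0)/d)^d$, with $A := D^2\varphi(x_0) \ge 0$. By \Cref{cequi} (with $A \in \bS_+$ and $M(A,f(x_0)) \le 0$, which is the reverse inequality — so here I instead argue directly): choose via \Cref{sdiag} the maximizer $B' \in \bS_1$ commuting with $A$; in the common eigenbasis, with $a_i$ the eigenvalues of $A$ and $b_i$ those of $B'$, optimality and the constraint $\sum b_i = 1$, $b_i \ge 0$ give $b_i \propto$ the choice that by AM-GM achieves $B':A = d\sqrt[d]{\det A \det B'}$ and $\det B' = d^{-d}\cdot(\text{the right factor})$, yielding $H(A,f(x_0)) = \sqrt[d]{\det B'}(f(x_0) - d\sqrt[d]{\det A}) \le 0$ since $d\sqrt[d]{\det A} \ge f(x_0)$.

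\medskip

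\noindent\textbf{Main obstacle.} The crux is the admissibility issue in \Cref{def:cviscsol}: that definition only lets us test against \emph{convex} $\varphi \in C^2$, whereas the Bellman notion uses arbitrary $\varphi$. So the real work is the reduction showing that for a convex $u$, whenever $u - \varphi$ has a local extremum at an interior point the second-order behaviour of $\varphi$ at that point is already constrained (positive semidefinite Hessian) and can be realized by a genuinely convex $C^2$ test function after an arbitrarily small quadratic perturbation, so that the hypothesis applies and the perturbation can be removed by continuity of $M$ and $H$ in $A$. Handling the borderline eigenvalue $\lambda = 0$ and making the "strictly convex difference contradicts a local max'' argument fully rigorous (one-dimensional restriction, using $\LSC$/$\USC$ regularity and convexity of $u$) is where I expect the technical care to concentrate; the algebraic AM-GM/Lagrange-multiplier computation identifying the value of $H(A,f)$ for $A \ge 0$ is routine given \Cref{sdiag}.
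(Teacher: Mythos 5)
Your supersolution case rests on a false claim: that convexity of $u$ forces $D^2\varphi(x_0)\ge 0$ whenever $u-\varphi$ has a local \emph{minimum} at $x_0$. At a minimum $\varphi$ touches the convex function $u$ from \emph{below}, and a smooth function touching a convex function from below can be arbitrarily concave there: in $d=1$ take $u(x)=|x|$ (or $x^2$) and $\varphi(x)=-x^2$, so that $u-\varphi$ has a global minimum at $0$ while $\varphi''(0)=-2<0$. Your perturbation justification fails twice over: adding $\tfrac{\epsilon}{2}|x-x_0|^2$ with small $\epsilon$ cannot cure a strictly negative eigenvalue, and adding a convex quadratic to $\varphi$ \emph{destroys} the local-minimum property of $u-\varphi$ (that trick only preserves a local maximum). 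So genuinely non-convex test functions do occur in the supersolution half and must be dealt with; this is exactly what the paper's Step 2(b) does in one line: after diagonalizing, some $\partial^2_{\ell\ell}\varphi(x_0)\le 0$, and the rank-one control $D_\ell\in\bS_1$ (with $\det D_\ell=0$, so the $f$-term drops) gives $H(D^2\varphi(x_0),f(x_0))\ge -\partial^2_{\ell\ell}\varphi(x_0)\ge 0$ directly.

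Even in the sub-case where $\varphi$ is convex near $x_0$, your algebra runs the wrong way: from $\det A\le (f(x_0)/d)^d$ one gets $f(x_0)-d\sqrt[d]{\det A}\ge 0$, not $\le 0$, so your displayed chain proves nothing; and an estimate of the form ``each term in the supremum is $\le 0$'' would in any case yield $H\le 0$, whereas the supersolution assertion $H(A,f(x_0))\ge 0$ requires exhibiting \emph{one} $B\in\bS_1$ with $-B:A+f(x_0)\sqrt[d]{\det B}\ge 0$ (for $A>0$ take $b_i\propto 1/a_i$ in a diagonalizing basis, or follow the paper: set $\widehat f=d\sqrt[d]{\det A}\le f(x_0)$, use \cref{cequi} to get $H(A,\widehat f)=0$ and monotonicity of $H$ in $f$). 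By contrast, your subsolution half is essentially correct and parallels the paper's Step 1: the one-dimensional strict-convexity argument forcing $D^2\varphi(x_0)\ge 0$ is the content of the fact the paper cites from \cite{Gutierrez01}, and your AM--GM conclusion $B:A\ge d\sqrt[d]{\det A\,\det B}\ge f(x_0)\sqrt[d]{\det B}$ for all $B\in\bS_1$ is a legitimate alternative to the paper's slack-variable/\cref{cequi}/monotonicity ending. As written, however, the supersolution direction is not proved.
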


\begin{proof}
{\em Step 1}: We first consider the case that $u$ is a viscosity subsolution of \cref{EMAeq}. Let $\phi\in C^2(\Ome)$ such that $u-\phi$ attains a local maximum at $x\in \Ome$. Since $u$ is convex it follows that $\phi$ is convex in a neighborhood $N$ of $x$, cf.~\cite[Remark 1.3.2]{Gutierrez01}. By the definition of viscosity subsolutions on the set of convex functions, noting the local character of the definition, we have $M\bigl(D^2\phi(x),f(x)\bigr)\leq 0$. 

Let $\xi\geq 0$ such that $M(D^2\phi(x),f(x))+\xi=0.$ Equivalently, 
\[
M(D^2\phi(x),\widehat{f})=0 \quad\mbox{with}\quad \widehat{f}:= d\sqrt[d]{\Bigl(\frac{f(x)}{d}\Bigr)^d +\xi} \ge f(x).
\]
By \cref{cequi} we have $H(D^2\phi(x),\widehat{f}) = 0$. Thus, $u$ is a viscosity subsolution of~\cref{EHJBeq}, using that $g \mapsto H(D^2\phi(x),g)$ is monotonically increasing.

\medskip
{\em Step 2:} Now we consider the case that $u$ is a viscosity supersolution of \cref{EMAeq}. The proof of this step differs because now non-convex $\phi$ which are test functions for $H$ but not $M$ need to be considered and because a negative slack variable $\xi$ can in general not be covered by \cref{cequi}.

Let $\phi\in C^2(\Ome)$ such that $u-\phi$ attains a local minimum at $x\in \Ome$. 

{\em (a)} We first suppose that $\phi$ is convex in a neighborhood of $x$. Then we have $M\bigl(D^2\phi(x),f(x)\bigr)\geq 0$ and that
\[
\Bigl( \frac{f}{d} \Bigr)^d \ge \det(D^2\phi(x)) \ge 0.
\]
Hence with $\widehat{f} := d \sqrt[d]{\det(D^2\phi(x))}$ there holds $f(x) \ge \widehat{f} \ge 0$ and $M(D^2\phi(x),\widehat{f}) = 0$. Due to \cref{cequi}, $H(D^2\phi(x),f(x)) \ge H(D^2\phi(x),\widehat{f}) = 0$.

{\em (b)} Now suppose that $\phi$ is not convex in the vicinity of $x$. We may assume without loss of generality that $D^2\phi(x)$ is diagonal. Then there is a $\partial^2_{\ell\ell} \phi(x) \le 0$. Therefore
\begin{align*}
H(D^2\phi(x),f(x))
& \ge - D_\ell : D^2\phi(x) = - \partial^2_{\ell\ell} \phi(x) \ge 0.
\end{align*}

Parts (a) and (b) guarantee that $u$ is a viscosity supersolution of
\cref{EHJBeq}.
\end{proof}

To show that solutions of the Bellman problem solve the Monge-Amp\`ere problem, convexity needs to be enforced. We first prove a technical lemma.

\begin{lemma} \label{Hinv}
Let $A \in \bS_+$, $f \in [0,\infty)$ and let $\lambda$ be the smallest eigenvalue of $A$. Then the function
\[
\Phi_{A,f} : \; [-f, \infty) \to [-\lambda, \infty), \; \delta \mapsto H(A, f + \delta)
\]
is continuous, strictly monotonically increasing and bijective.
\end{lemma}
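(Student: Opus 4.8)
The plan is to reduce the statement to an analysis of the scalar function $g \mapsto H(A,g)$ on $[0,\infty)$, and then to read off the claim for $\Phi_{A,f}$ through the affine substitution $g = f+\delta$ (which maps $[-f,\infty)$ onto $[0,\infty)$). So throughout I would work with $\phi(g) := H(A,g) = \sup_{B\in\bS_1}\bigl(-B:A + g\sqrt[d]{\det B}\bigr)$, $g\ge 0$.

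First I would extract three ``soft'' properties from the representation of $\phi$ as a supremum of affine functions. For each fixed $B\in\bS_1$ the map $g\mapsto -B:A + g\sqrt[d]{\det B}$ is affine with slope $\sqrt[d]{\det B}\ge 0$, so $\phi$ is convex and non-decreasing; and by the arithmetic--geometric mean inequality applied to the eigenvalues of $B$, the slope satisfies $\sqrt[d]{\det B}\le \tfrac1d\trace B = \tfrac1d$, so these affine functions are uniformly $\tfrac1d$-Lipschitz and hence $\phi$ is $\tfrac1d$-Lipschitz, in particular continuous — which already gives continuity of $\Phi_{A,f}$. Next I would identify the endpoints of the range: at $g=0$, $\phi(0) = -\inf_{B\in\bS_1} B:A$, and since $B:A = \trace(BA)\ge \lambda\,\trace B = \lambda$ for every $B\in\bS_1$ with equality when $B$ is the spectral projection onto a $\lambda$-eigenvector of $A$, we get $\phi(0) = -\lambda = \Phi_{A,f}(-f)$; while testing with $B=\tfrac1d\Id\in\bS_1$ yields $\phi(g)\ge -\tfrac1d\trace A + \tfrac gd \to \infty$. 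Given continuity and the intermediate value theorem, the only remaining ingredient for bijectivity onto $[-\lambda,\infty)$ is strict monotonicity.

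The heart of the proof — and the step I expect to need the most care — is therefore strict monotonicity of $\phi$ on $[0,\infty)$. I would first show $\phi(g) > -\lambda = \phi(0)$ for every $g>0$: working in coordinates that diagonalize $A$ with eigenvalues $\lambda = \lambda_1\le\dots\le\lambda_d$ and testing with $B_\varepsilon := \mathrm{diag}(1-(d-1)\varepsilon,\varepsilon,\dots,\varepsilon)\in\bS_1$, a direct computation gives $-B_\varepsilon:A + g\sqrt[d]{\det B_\varepsilon} = -\lambda - \varepsilon C + g\,(1-(d-1)\varepsilon)^{1/d}\varepsilon^{(d-1)/d}$ with $C := \sum_{i\ge 2}(\lambda_i-\lambda)\ge 0$, and since $1/d<1$ the last term dominates the $O(\varepsilon)$ term for $\varepsilon$ small, so $\phi(g)>-\lambda$. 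Finally I would upgrade this to strict monotonicity using convexity: if $\phi$ were not strictly increasing it would, being non-decreasing, be constant on some interval $[g_1,g_2]$ with $g_1<g_2$; but a non-decreasing convex function that is constant on an interval is constant to the left of it as well, forcing $\phi(0)=\phi(g_2)$ with $g_2>0$, contradicting the previous step. (An alternative route to strict monotonicity is to note via \cref{sdiag} that for $g>0$ a maximizer commuting with $A$ must lie in the interior of the simplex — since $t\mapsto t^{1/d}$ has infinite one-sided derivative at $0$ — hence has $\det B>0$, which makes the affine function realizing $\phi(g)$ strictly increasing; the convexity argument is cleaner because it sidesteps a case analysis when several eigenvalues of the maximizer vanish.) Substituting $g=f+\delta$ then yields that $\Phi_{A,f}$ is a continuous, strictly increasing bijection of $[-f,\infty)$ onto $[-\lambda,\infty)$.
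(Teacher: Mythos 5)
Your proof is correct, and its computational core coincides with the paper's: you identify the endpoint value $-\lambda$ by testing with the rank-one matrix (the spectral projector, which after diagonalization is the paper's $D_1$), you show the supremum strictly exceeds $-\lambda$ for a positive coefficient by the same perturbation $\mathrm{diag}(1-(d-1)\varepsilon,\varepsilon,\dots,\varepsilon)$, exploiting that $\varepsilon^{(d-1)/d}$ dominates $O(\varepsilon)$, and you obtain surjectivity from the lower bound furnished by the control $\frac1d\,\Id$ together with the intermediate value theorem. The two places where you genuinely deviate are worth noting. For continuity, the paper invokes convexity of a supremum of affine functions, while you observe via AM--GM that all slopes $\sqrt[d]{\det B}$ lie in $[0,\tfrac1d]$, so $\Phi_{A,f}$ is $\tfrac1d$-Lipschitz; this is slightly cleaner, since it settles continuity at the left endpoint of the domain without any appeal to lower semicontinuity of the supremum. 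For strict monotonicity, the paper argues that since the rank-one matrix maximizes over the singular elements of $\bS_1$ and is beaten by the perturbed matrix, the true maximizer $B'$ must be invertible, and then $\det B'>0$ gives strict increase at every $\delta$ directly; you instead establish strictness only at the left endpoint and propagate it by the convexity fact that a non-decreasing convex function constant on an interval is constant to the left of that interval (a correct deduction from monotone difference quotients). Your route avoids any discussion of the maximizer's rank — indeed your parenthetical alternative, which is essentially the paper's argument, is the case analysis you sidestep — at the price of an extra structural lemma about convex functions; both are complete proofs of the same statement.
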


\begin{proof}
We assume without loss of generality that $A$ is a diagonal matrix and that $\lambda$ is the first entry on the diagonal of $A$.

If $\delta = -f$ then \blue{the function value of $H(A, f + \delta)$ cannot be affected by the term $(f + \delta) \sqrt[d]{\det B}$ in \cref{HJB_operator}} for any $B \in \bS_1$. Hence $D_1 \in \bS_1$ is a maximizer in \cref{HJB_operator} and $H(A,f + \delta) = - \lambda$.

Now let $\delta > -f$ and consider $B_\alpha = \alpha \, {\rm Id} + (1 - d \alpha) D_1$. Then, as $\alpha \to 0$,
\[
- B_\alpha : A = - \alpha \, {\rm tr}A - (1 - d \alpha) \lambda = - \lambda + \mathcal{O}(\alpha).
\]
Similarly,
\[
\sqrt[d]{\det B_\alpha} (f + \delta) = \left( (1 - (d-1) \alpha) \alpha^{d-1} \right)^{\frac{1}{d}} (f + \delta) = \mathcal{O}(\alpha^{1-1/d}).
\]
It follows that there is an $\alpha \in (0,1]$ such that
\[
-B_\alpha: A + (f + \delta)  \sqrt[d]{\det B_\alpha} > -D_1 : A + (f + \delta)  \sqrt[d]{\det D_1} = - \lambda.
\]
As $D_1$ is maximizer over the set of singular matrices in $\bS_1$, it is clear that the maximizer $B'$ over all of $\bS_1$ is invertible. Let $h > 0$. Then, 
\[
\Phi_{A,f}(\delta) < -B' : A + \sqrt[d]{\det B'} (f + \delta + h) \le H(A, f + \delta + h) = \Phi_{A,f}(\delta + h).
\]
Hence $\Phi_{A,f}$ is strictly monotone and thus injective.

As supremum of affine functions, $\Phi_{A,f}$ is convex and therefore continuous. This with $\Phi_{A,f}(\delta) \ge \frac{1}{d} ( f + \delta  - {\rm tr}A )$, owing to the control $\frac{1}{d} {\rm Id} \in \bS_1$, ensures that $\Phi_{A,f}$ is surjective. 
\end{proof}

With \cref{Hinv} we can find for each $A$ a suitable $\widehat{f}$ such that $H(A,\widehat{f}) = 0$.

\begin{theorem} \label{thm:HJB_to_MA}
Let $f \in C(\Ome)$ be non-negative and $u$ be a viscosity solution of the Bellman problem \cref{EHJBeq}. Then $u$ is a viscosity solution of Monge-Amp\`ere problem \cref{EMAeq} on the set of convex functions.
\end{theorem}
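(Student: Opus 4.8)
The plan is to verify the three requirements of \cref{def:cviscsol} for $u$: that $u$ is convex, that it satisfies the viscosity subsolution inequality for $M$ against convex test functions, and that it satisfies the corresponding supersolution inequality. Note first that, being simultaneously a viscosity subsolution and supersolution of \cref{EHJBeq}, the function $u$ lies in $\USC(\Ome)\cap\LSC(\Ome)$ and is therefore continuous; this continuity enters the convexity step.

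First I would show that $u$ is convex. Let $\varphi\in C^2(\Ome)$ be such that $u-\varphi$ has a local maximum at $x\in\Ome$. The subsolution property gives $H(D^2\varphi(x),f(x))\le 0$. Since $f(x)\ge 0$ and $\sqrt[d]{\det B}\ge 0$ for every $B\in\bS_1$, discarding the determinant term in \cref{HJB_operator} can only lower the value, so $0\ge H(D^2\varphi(x),f(x))\ge\sup_{B\in\bS_1}\bigl(-B:D^2\varphi(x)\bigr)$, and this last supremum equals minus the smallest eigenvalue of $D^2\varphi(x)$; hence $D^2\varphi(x)\in\bS_+$. Thus every $C^2$ function touching $u$ from above has positive semidefinite Hessian, and a continuous function on the convex domain $\Ome$ with this property is convex; the reduction to a one-dimensional statement along line segments is obtained by extending, for a fixed direction, a one-dimensional touching function by a quadratic in the transverse variables.

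Next I would establish the two inequalities for $M$ using \cref{Hinv} and \cref{cequi}. For the subsolution inequality, take a convex $\varphi\in C^2(\Ome)$ with $u-\varphi$ attaining a local maximum at $x$; then $A:=D^2\varphi(x)\in\bS_+$ and $H(A,f(x))\le 0$. By \cref{Hinv} the map $\delta\mapsto H(A,f(x)+\delta)$ is a continuous, strictly increasing bijection from $[-f(x),\infty)$ onto $[-\lambda,\infty)$, where $\lambda\ge 0$ is the smallest eigenvalue of $A$; since $0$ lies in the range there is a unique $\delta^*\ge -f(x)$ with $H(A,\widehat f)=0$ for $\widehat f:=f(x)+\delta^*$, and monotonicity together with $H(A,f(x))\le 0$ forces $\delta^*\ge 0$, hence $\widehat f\ge f(x)\ge 0$. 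Then \cref{cequi} gives $\det A=(\widehat f/d)^d\ge (f(x)/d)^d$, i.e.\ $M(A,f(x))\le 0$. For the supersolution inequality, take a convex $\varphi\in C^2(\Ome)$ with $u-\varphi$ attaining a local minimum at $x$; running the same argument from $H(A,f(x))\ge 0$ now yields $\delta^*\in[-f(x),0]$, hence $0\le\widehat f\le f(x)$, and \cref{cequi} gives $\det A=(\widehat f/d)^d\le (f(x)/d)^d$, i.e.\ $M(A,f(x))\ge 0$.

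The two applications of \cref{Hinv} and \cref{cequi} are routine once those lemmas are in hand. The step I expect to require the most care is the convexity argument: passing from the assertion that every $C^2$ test function touching $u$ from above has positive semidefinite Hessian to genuine convexity of $u$ relies on the continuity of $u$ together with a careful localisation of the transverse test quadratic near a point of a segment along which convexity would otherwise fail. This is the main obstacle, though it is a standard fact about continuous functions.
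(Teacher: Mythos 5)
Your proposal is correct and follows essentially the same route as the paper: convexity of $u$ is extracted from the subsolution property by dropping the $f\sqrt[d]{\det B}$ term, and the sub- and supersolution inequalities for $M$ are obtained exactly as in the paper by inverting $\Phi_{D^2\varphi(x),f(x)}$ (\cref{Hinv}) to produce $\widehat f$ with $H(D^2\varphi(x),\widehat f)=0$ and then applying \cref{cequi} together with the monotonicity of $f\mapsto(f/d)^d$. The only deviation is cosmetic: where the paper phrases the convexity step through second-order superjets and cites \cite[Lemma 1]{ALL97}, you re-derive that fact directly (reduction to lines via a large transverse quadratic), which is a valid, standard argument.
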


\begin{proof}
{\em Step 0:} \blue{Let $x \in \Ome$ and let $(p,A)$ belong to the second-order superjet
\begin{align} \label{eq:sosj}
J^{2,+} u(x):=\left\{ (D \phi(x), D^2 \phi(x)) : \phi \in C^2 \text{ and } u - \phi \text{ has local maximum at } x \right\}.
\end{align}
Then}
\[
\sup_{B \in \bS_1} \Bigl( -B:A + f \sqrt[d]{\det B} \Bigr) \le 0
\]
due to the definition of viscosity subsolutions in terms of second-order jets instead of test functions. Thus $B : A \ge f \sqrt[d]{\det B} \ge 0$ for all $B \in \bS_1$, implying that $A \ge 0$. It follows from \cite[Lemma 1]{ALL97} that $u$ is convex on $\Ome$.

{\em Step 1:} We now show that $u$ is a viscosity subsolution of \cref{EMAeq}. Let $\phi\in C^2(\blue{\Ome})$ be convex such that $u-\phi$ attains a local maximum at $x\in \Ome$. Then $H\bigl(D^2\phi(x),f(x)\bigr)\leq 0$. Let
\[
\widehat{f} = f(x) + \Phi_{D^2\phi(x),f(x)}^{-1}(0),
\]
so that $H(D^2\phi(x),\widehat{f})=0$. Since $H(D^2\phi(x),f(x)) \le 0$ it follows from monotonicity that $\widehat{f} \ge f(x) \ge 0$. By \cref{cequi} we have $M(D^2\phi(x),\widehat{f}) = 0$. Thus, $u$ is a viscosity subsolution of \cref{EMAeq}.

{\em Step 2:} Now we show that $u$ is a viscosity supersolution of \cref{EMAeq}. Let $\phi\in C^2(\blue{\Ome})$ be convex such that $u-\phi$ attains a local minimum at $x\in \Ome$. Then we have $H\bigl(D^2\phi(x),f(x)\bigr)\geq 0$. Since $D^2\phi(x)$ is positive semi-definite we know that $H(D^2\phi(x), 0) \le 0$. So $0$ is in the domain of $\Phi_{D^2\phi(x),f(x)}^{-1}$. Set
\[
\widehat{f} = f(x) + \Phi_{D^2\phi(x),f(x)}^{-1}(0).
\]
It follows $f(x) \ge \widehat{f} \ge 0$. By \cref{cequi} we have $M(D^2\phi(x),\widehat{f}) = 0$. Thus, $u$ is a viscosity supersolution of \cref{EMAeq}.
\end{proof}

At this point we have shown that the set of viscosity solutions of the Bellman and Monge-Amp\`ere operators coincide without imposing any boundary conditions. It is clear that the solution sets also coincide if Dirichlet conditions are enforced pointwise:
\begin{align*}
& \{ \blue{v \in C(\oOme) :} \text{ viscosity solution of \cref{EMAeq}} \} \cap \{ v \in C(\oOme) : v|_{\p \Ome} = 0 \}\\
= \, &\{ \blue{v \in C(\oOme) :} \text{ viscosity solution of \cref{EHJBeq}} \} \cap \{ v \in C(\oOme) : v|_{\p \Ome} = 0 \}.
\end{align*}
We now turn to a comparison principle for the Bellman problem, which holds on the whole function space. This is an advantage over comparison principles for Monge-Amp\`ere problem, which are usually formulated for the set of convex functions.

\begin{lemma} \label{thm:comparison_principle}
Let $\blue{u \in \USC(\oOme)}$ be a subsolution and $\blue{v \in \LSC(\oOme)}$ be a supersolution of the Bellman problem \cref{EHJBeq}.  Then $u \le v$ on $\oOme$ if $u \le v$ on $\p\Ome$.
\end{lemma}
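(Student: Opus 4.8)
The plan is to derive a comparison principle for the Bellman equation \cref{EHJBeq} from the by-now standard theory for Hamilton--Jacobi--Bellman equations, after verifying that the operator $H$ satisfies the structural hypotheses required for that theory. The key observation is that $H(A,f)$, being a supremum over the \emph{compact} set $\bS_1$ of the affine-in-$A$ maps $A \mapsto -B:A + f\sqrt[d]{\det B}$, is a concave, degenerate-elliptic operator: it is nonincreasing in $A$ (if $A_1 \le A_2$ then $-B:A_1 \ge -B:A_2$ for every $B \in \bS_1 \subset \bS_+$, so $H(A_1,f)\ge H(A_2,f)$), it is Lipschitz in $A$ uniformly in the control (since $\bS_1$ is bounded), and it involves no zeroth-order term in $u$ and no first-order term at all. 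First I would make the dependence on the (fixed, continuous) datum $f$ explicit by writing $\widehat H(A,x) := H(A,f(x))$, and note that $x \mapsto \widehat H(A,x)$ is continuous uniformly in $A$ on bounded sets, because $f$ is continuous on $\Ome$ (and bounded) and $g \mapsto H(A,g)$ is Lipschitz with constant $\sup_{B\in\bS_1}\sqrt[d]{\det B} \le d^{-1}$, uniformly in $A$.

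The next step is to run the standard doubling-of-variables argument. Suppose for contradiction that $\sup_{\oOme}(u-v) =: \theta > 0$; since $u \le v$ on $\p\Ome$ and $u-v$ is upper semicontinuous on the compact set $\oOme$, this supremum is attained at an interior point. For $\varepsilon > 0$ consider $\Psi_\varepsilon(x,y) := u(x) - v(y) - \tfrac{1}{2\varepsilon}|x-y|^2$ on $\oOme \times \oOme$, let $(x_\varepsilon,y_\varepsilon)$ be a maximum point, and recall the usual facts: $\tfrac{1}{\varepsilon}|x_\varepsilon - y_\varepsilon|^2 \to 0$, any limit point of $x_\varepsilon,y_\varepsilon$ is an interior maximizer of $u-v$, so for $\varepsilon$ small $x_\varepsilon,y_\varepsilon \in \Ome$. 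By the Crandall--Ishii lemma there are symmetric matrices $X,Y$ with $(\tfrac{1}{\varepsilon}(x_\varepsilon-y_\varepsilon),X) \in \overline{J}^{2,+}u(x_\varepsilon)$, $(\tfrac{1}{\varepsilon}(x_\varepsilon-y_\varepsilon),Y)\in \overline{J}^{2,-}v(y_\varepsilon)$, and $X \le Y$. Using the jet formulations of sub/supersolution (as already invoked in Step~0 of the proof of \cref{thm:HJB_to_MA}) gives $\widehat H(X,x_\varepsilon) \le 0 \le \widehat H(Y,y_\varepsilon)$. Now $X \le Y$ and degenerate ellipticity give $\widehat H(Y,x_\varepsilon) \le \widehat H(X,x_\varepsilon) \le 0$, while the uniform Lipschitz bound on $\bS_1$ gives $|\widehat H(Y,x_\varepsilon) - \widehat H(Y,y_\varepsilon)| \le \sup_{B\in\bS_1}|B|\,|X - Y|$ is \emph{not} what we want — rather the $x$-dependence enters only through $f$, so $|\widehat H(Y,x_\varepsilon)-\widehat H(Y,y_\varepsilon)| \le d^{-1}|f(x_\varepsilon)-f(y_\varepsilon)| \to 0$ since $|x_\varepsilon - y_\varepsilon|\to 0$ and $f$ is (uniformly) continuous near the limit point. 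Combining, $0 \le \widehat H(Y,y_\varepsilon) \le \widehat H(Y,x_\varepsilon) + o(1) \le o(1)$, which is not yet a contradiction; one therefore adds a strict-subsolution perturbation.

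The way to get strictness is the classical trick: replace $u$ by $u_\eta := u - \eta\,w$ for small $\eta>0$, where $w \in C^2(\oOme)$ is chosen so that $\widehat H(D^2 u_\eta, x)$ picks up a strictly negative term. A convenient choice exploits that $\tfrac{1}{d}\Id \in \bS_1$, which yields the linear lower bound $H(A,f) \ge \tfrac1d(f - \trace A)$ recorded in the proof of \cref{Hinv}; equivalently $H(A + s\Id, f) \le H(A,f) - \tfrac{s}{d}$ for $s>0$ — no wait, the correct monotone direction is $H(A + s\Id,f)\le H(A,f) - \tfrac{s}{d}\cdot d$? Let me instead simply use $w(x) = |x|^2$, so $D^2 w = 2\Id$ and, by degenerate ellipticity in $A$ together with $B:\Id = \trace B = 1$ for $B\in\bS_1$, $\widehat H(D^2\varphi + 2\eta\Id, x) = \sup_{B}(-B:(D^2\varphi + 2\eta\Id) + f\sqrt[d]{\det B}) = \widehat H(D^2\varphi,x) - 2\eta$, giving a clean strict gain of $-2\eta$. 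Thus $u_\eta$ is a subsolution of $\widehat H(\cdot,\cdot) = -2\eta < 0$; since $u_\eta \to u$ uniformly it suffices to show $u_\eta \le v$ on $\oOme$ for each $\eta > 0$ and let $\eta \downarrow 0$. Rerunning the doubling argument with $u_\eta$ in place of $u$ now gives $2\eta \le \widehat H(Y,y_\varepsilon) - \widehat H(X + 2\eta\Id, x_\varepsilon) \le (\widehat H(Y,x_\varepsilon) - \widehat H(X + 2\eta\Id,x_\varepsilon)) + o(1) \le -2\eta + o(1)$ using $X \le Y$, i.e.\ $4\eta \le o(1)$ as $\varepsilon \to 0$, a contradiction. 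Hence $\sup_{\oOme}(u-v) \le 0$.

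\textbf{Main obstacle.} The routine parts are the Crandall--Ishii doubling and the bookkeeping of the error terms; the only genuinely delicate point is that the maximum of $\Psi_\varepsilon$ is interior for small $\varepsilon$ — this is where $u \le v$ on $\p\Ome$ is used, and it requires the (standard) penalization argument ensuring $x_\varepsilon, y_\varepsilon$ stay away from $\p\Ome$ once $\theta > 0$. Since the boundary data are imposed pointwise (not in the viscosity sense) and $\oOme$ is compact with $u \in \USC(\oOme)$, $v \in \LSC(\oOme)$, this step is clean here — no boundary viscosity inequalities enter — which is precisely why the comparison principle holds on the whole function space with no convexity assumption. The absence of lower-order terms in $u$ means no sign condition or exponential change of variables is needed; the perturbation by $\eta|x|^2$ suffices, and its feasibility rests solely on $\Id/d$, equivalently any fixed interior point of $\bS_1$, being an admissible control.
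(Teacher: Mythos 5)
Your overall route is the same as the paper's: argue by contradiction, regularize $u$ by a quadratic to obtain a \emph{strict} subsolution (exploiting $\trace B=1$ for $B\in\bS_1$), then run the doubling-of-variables/Crandall--Ishii argument and use the Lipschitz dependence of $H$ on $f$ with constant $\max_{B\in\bS_1}\sqrt[d]{\det B}=1/d$. However, there is a genuine error in the one step that produces the strictness: the sign of your perturbation is wrong. Since $H(A,f)=\sup_{B\in\bS_1}(-B:A+f\sqrt[d]{\det B})$ is non-increasing in $A$, a strict subsolution is obtained by \emph{adding} a convex quadratic to $u$, not subtracting one. Your own identity $H(A+2\eta\,\Id,f)=H(A,f)-2\eta$ is correct, but for $u_\eta:=u-\eta|x|^2$ the jets satisfy $(p,X)\in J^{2,+}u_\eta(x)$ iff $(p+2\eta x,\,X+2\eta\,\Id)\in J^{2,+}u(x)$, so the subsolution property of $u$ only yields $H(X+2\eta\,\Id,f(x))\le 0$, i.e.\ $H(X,f(x))\le +2\eta$ --- weaker, not stronger, than being a subsolution. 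Consequently the concluding chain ``$2\eta \le \widehat H(Y,y_\varepsilon)-\widehat H(X+2\eta\Id,x_\varepsilon)\le\cdots\le -2\eta+o(1)$'' is unjustified at both ends: the left inequality has no basis (the difference is only $\ge 0$), and with $X\le Y$ and antitonicity the middle term is bounded above by $+2\eta$, not $-2\eta$, so no contradiction results.

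The fix is exactly what the paper does: perturb upward, $u_\epsilon(x)=u(x)+\frac{\epsilon}{2}|x-x'|^2-\frac{\epsilon}{2}\sup_{y\in\Ome}|y-x'|^2$, where $x'$ is an interior point with $u(x')>v(x')$. The added convex quadratic shifts the jet Hessians down by $\epsilon\,\Id$, so $H(X,f(x))\le H(X-\epsilon\,\Id,f(x))-\epsilon\le-\epsilon$ on the superjets of $u_\epsilon$, giving the needed strictness, while the subtracted constant (which your version omits and would need) preserves $u_\epsilon\le v$ on $\p\Ome$ and keeps $u_\epsilon(x')>v(x')$ for small $\epsilon$, so that the doubling maxima stay interior. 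With that single correction, the remainder of your argument (interior maximizers, $\frac1\varepsilon|x_\varepsilon-y_\varepsilon|^2\to0$, Crandall--Ishii with $X\le Y$, and $|H(Y,f(x_\varepsilon))-H(Y,f(y_\varepsilon))|\le \frac1d|f(x_\varepsilon)-f(y_\varepsilon)|\to0$) goes through and coincides with the paper's proof of \cref{thm:comparison_principle}.
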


\begin{proof}
We briefly outline how the comparison argument of Section 5.C in \cite{Crandall_Ishii_Lions92} applies in this context. Suppose that $u \le v$ on $\p\Ome$ but $u(x') > v(x')$ for some $x' \in \Ome$.  For $\epsilon > 0$ set $u_\epsilon(x) := u(x) + \frac{\epsilon}{2} | x - x' |^2 - \blue{\frac{\epsilon}{2} \sup_{y\in \Omega} |y-x'|^2}$, where $| \cdot |$ denotes the Euclidean norm. \blue{Notice that} $u_\epsilon \le v$ on $\p\Ome$. Moreover, for $x \in \Ome$, one has \blue{\cite[Remark 2.7(ii)]{Crandall_Ishii_Lions92}}
\[
(p, X) \in \overline{J}^{2,+} u_\epsilon(x) \qquad \text{if and only if} \qquad (p - \nabla_x {\textstyle\frac{\epsilon}{2} | x - x' |^2}, X - \epsilon {\rm Id}) \in \overline{J}^{2,+} u(x),
\]
\blue{where we referred to the closures
\begin{align*}
\overline{J}^{2,+} u(x) := \bigl\{ & (p,X) \in \bR^d \times \bS : \exists \, (x_n, p_n, X_n) \in \Omega \times \bR \times \bS \text{ so that } \\
& (p_n, X_n) \in J^{2,+} u(x_n) \text{ and } (x_n, u(x_n), p_n, X_n) \to (x, u(x), p, X) \bigr\}
\end{align*}
of the superjets \cref{eq:sosj} as required by Theorem 3.2 of \cite{Crandall_Ishii_Lions92} used below. 

Now, with} the maximizer $B'$,
\begin{align*}
H(X,f(x)) & = \sup_{B \in \bS_1} \left( - B : X + f(x) \sqrt[d]{\det B} \right) = - B' : X + f(x) \sqrt[d]{\det B'}\\[1mm]
& = - B' : (X - \epsilon {\rm Id}) + f(x) \sqrt[d]{\det B'} - \epsilon \le H(X - \epsilon {\rm Id},f(x)) - \epsilon \le - \epsilon,
\end{align*}
where we used that $B' : \Id = \trace B' = 1$. 

We assume $\epsilon \in (0, \blue{2 (u(x') - v(x')) / \diam(\Omega)^2})$ because then $u_\epsilon(x') > v(x')$. Arguing with Proposition 3.7 of \cite{Crandall_Ishii_Lions92}, for $\alpha$ sufficiently large there exist $(x_\alpha, y_\alpha) \in \Ome \times \Ome$ which maximize $(x,y) \mapsto u_\epsilon(x) - v(y) - \frac{\alpha}{2} | x - y |^2$, as the maxima cannot be attained at the boundary. Appealing to Theorem 3.2, (3.9) and (3.10) of \cite{Crandall_Ishii_Lions92}, there are
\[
(\alpha (x_\alpha - y_\alpha), X) \in \overline{J}^{2,+} u_\epsilon(x_\alpha), \qquad (\alpha (x_\alpha - y_\alpha), Y) \in \overline{J}^{2,-} v(y_\alpha)
\]
such that $X \le Y$. \blue{Therefore
\begin{equation} \label{eq:contradict}
\begin{array}{rl}
0 & = H(X, f(x_\alpha)) - H(Y, f(y_\alpha)) + H(Y, f(y_\alpha)) - H(X, f(x_\alpha)) \\[1mm]
& \leq \frac{f(y_\alpha) - f(x_\alpha)}{d} - \epsilon,
\end{array}
\end{equation}
where we used $H(X, f(x_\alpha)) \le - \varepsilon$ and $H(Y, f(y_\alpha)) \ge 0$ and
\begin{align*}
H(Y, f(y_\alpha)) - H(X, f(x_\alpha)) & \le \sup_{B \in \bS_1} \Bigl( -B:(Y-X) + (f(y_\alpha) - f(x_\alpha)) \sqrt[d]{\det B} \Bigr)\\
& \le (f(y_\alpha) - f(x_\alpha)) \sup_{B \in \bS_1} \sqrt[d]{\det B}.
\end{align*}
Owing to the continuity of $f$ we find $f(y_\alpha) - f(x_\alpha) \to 0$ as $\alpha \to \infty$, so that \cref{eq:contradict} is a contradiction. Hence $u_\epsilon(x) \le v(x)$ for small $\epsilon > 0$ and $x \in \oOme$.}
\end{proof}

\begin{remark}[General boundary conditions and convexity] It is a straightforward exercise to show that we can impose the more general (possibly nonlinear) boundary conditions $(p,r,x) \to B(p,r,x)$ in the viscosity sense in \cref{EMA} and \cref{EHJB}, where the new argument $p$ takes the role of a gradient, and retain equal solution sets.
 
We also observe that the proof of equivalence does not need convexity of the domain~$\Ome$. We note however a close relationship between boundary conditions, comparison and convexity in \cite{Gutierrez01} and also in the \cref{sec:convergence} below, where we study convergence of numerical methods.
\end{remark}

\section{Monotone semi-Lagrangian methods} \label{sec:SLmethods}

In \cref{sec:MA_HJB} we prove that the Monge-Amp\`ere problem \cref{EMA} has a Bellman reformulation \cref{EHJB} in the viscosity sense. This equivalence opens a route for developing numerical methods for \cref{EMA} via \cref{EHJB}. There are major advantages in pursuing this approach. 

\begin{enumerate}[(a)]
\item In \cref{EMA} convexity is built into the boundary value problem as a constraint, cf.~\cref{def:cviscsol}, that is difficult to maintain at the discrete level. In contrast, the convexity of the solution is not enforced as a constraint in \cref{EHJB}. Instead, it arises implicitly from the structure of Bellman operator.
\item For monotone discretizations of Bellman equations there is a well-established framework of semi-smooth Newton methods, also known as Howard's algorithm \cite{Howard60}, which guarantee global superlinear convergence when solving the finite-dimensional equation. These methods have a successful track record for large-scale computations. Howard's algorithm also ensures existence and uniqueness of numerical solutions.
\item The treatment of the degenerate case $f(x) = 0$ is naturally incorporated in the converge proof and does not lead to complications in the analysis.
\item The literature on numerical methods for Bellman-type equations is in various aspects richer than that for Monge-Amp\`ere-type equations, for instance because of the connection to stochastic control problems. As a result, one can use or adapt the numerical methods for Bellman-type equations to solve Monge-Amp\`ere-type equations.
\end{enumerate}

In order to permit unstructured meshes we employ continuous \blue{linear} finite element spaces. \blue{Let $\cT_h$ denote a shape-regular triangular or tetrahedral partition, where $h$ is its mesh function. This means that
\begin{align} \label{eq:meshfun}
x \in T \text{ where } T \in \cT_h \qquad \implies \qquad h(x) = \diam(T).
\end{align}
On element boundaries $h(x)$ is equal to the diameter of the largest element neighboring it; so we could say that $h$ is the upper semicontinuous function with domain $\overline{\Omega}$ satisfying \cref{eq:meshfun}. We abbreviate $\| h \|_{L^\infty(\Omega)}$ by $\hmax$.} We denote by $\cN_h^I$ and $\cN_h^B$ respectively the interior and boundary grid points of $\cT_h$ and set $\cN_h:=\cN_h^I\cup \cN_h^B$. The union of elements, denoted $\Ome_h$, is called the computational domain. Because $\Ome$ is strictly convex, $\Ome_h$ cannot be equal to $\Ome$. We require that $\Ome_h$ approximates $\Ome$ in the sense that $\cN_h^B \subset \partial \Ome$ and $\Ome_h \subset \Ome$. 

\begin{figure}
\begin{center}
\begin{tikzpicture}[
  line width=0.2mm,
  >=stealth',
  dot/.style = {
   draw,
   fill = white,
   circle,
   inner sep = 0pt,
   minimum size = 4pt
  }
 ]
 \foreach \i [evaluate={\ii=int(\i-2);}] in {-3,-1,1,3}{
  \coordinate (\i) at (\i,0.15*\i*\i);
 }
 \foreach \i [evaluate={\ii=int(\i-2);}] in {-2,0,2}{
  \coordinate (\i) at (\i,1.0+0.15*\i*\i);
 }
 \coordinate (m34) at (-3.4,1.8375);
 \coordinate (m21) at (-2.1,1.8375);
 \coordinate (m19) at (-1.8,1.8375);
 \coordinate (m01) at (-0.4,1.8375);
 \coordinate (01) at (0.4,1.8375);
 \coordinate (19) at (1.8,1.8375);
 \coordinate (21) at (2.1,1.8375);
 \coordinate (34) at (3.4,1.8375);
 \coordinate (n1) at (-2,0.75);
 \coordinate (n2) at (-2.3,0.25);
 \filldraw[gray!10] (-3) -- (-1) -- (1) -- (3) -- (34) -- (m34);
 \draw[domain=-3.5:3.5, samples=50, very thick] plot (\x, {0.15*\x*\x});
 \draw[gray] (-3) -- (-2) -- (-1) -- (0) -- (1) -- (2) -- (3);
 \draw (m34) -- (-3) -- (-1) -- (1) -- (3) -- (34);
 \draw[gray] (-2) -- (0) -- (2);
 \draw[gray] (-2) -- (m21);
 \draw[gray] (-2) -- (m19);
 \draw[gray] (0) -- (m01);
 \draw[gray] (0) -- (01);
 \draw[gray] (2) -- (19);
 \draw[gray] (2) -- (21);
 \draw[->] (n1) -- (n2);
 \node at (-3,0.05) {outer normal vector};
 \node at (1,0.9) {$\Omega_h$};
 \node at (2.6,0.7) {$\partial \Omega$};
\end{tikzpicture}
\caption{$\Ome$ is approximated by $\Ome_h$ so that the nodes on $\partial \Ome_h$ belong to $\partial \Ome$. To extend functions $v : \Ome_h \to \mathbf{R}$ to $\Ome$, we assume that the extended function is constant along the normal coordinates of $\partial \Ome_h$, for $x \in \Ome \setminus \Ome_h$.}
\label{fig:computationaldomain}
\end{center}
\end{figure}
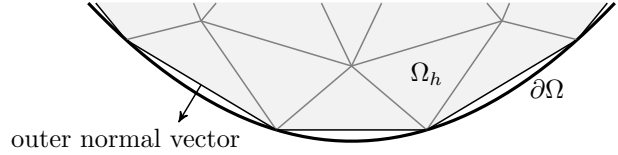

Let $V_h$ denote the space of continuous piecewise linear polynomials over $\cT_h$ and $V_h^0$ be the subspace of $V_h$ consisting of those functions which vanishes at every grid point in $\cN_h^B$. Further, let $\{ \psi_h^j\}_{j=1}^{J_0}$ denote the nodal basis for $V_h^0$ and $\{ \psi_h^j\}_{j=1}^{J}$ denote the nodal basis for $V_h$, where $J_0:=\mbox{card}\bigl(\cN_h^I\bigr)$ and $J:=\mbox{card}\bigl(\cN_h\bigr)$ are the cardinal numbers of $\cN_h^I$ and $\cN_h$, respectively. Often $\psi_h^j$ is called a hat function. In order to study convergence of numerical solutions we need to embed $V_h$ into $B(\Ome)$, i.e.~extend the domain of $v \in V_h$ from $\Ome_h$ to $\Ome$. We shall understand that $v \in V_h$ is extended \blue{as a} constant along the outer normal vectors of $\p \Ome_h$, see \cref{fig:computationaldomain}.  It is not intended that this extension is implemented in numerical codes. 

We first state a basic finite difference formula, which serves as building block for the numerical schemes in this paper. Let $\mathbf{b}\in \mathbf{R}^d$. For smooth $\phi : \mathbf{R}^d\to \mathbf{R}$ there holds for $k > 0$ and $x\in \mathbf{R}^d$
\begin{align} 
\trace \bigl[\bb\bb^T D^2\phi(x) \bigr] &=D^2\phi(x) \bb\cdot\bb = \p_{\bb\bb}^2 \phi(x) \label{eqn:SLscalar} \\
&= \frac{\phi(x-k\bb) -2\phi(x) + \phi(x+k\bb)}{k^2} + O(k^2). \nonumber
\end{align}
The proof of \cref{eqn:SLscalar} for $\phi\in C^4(\mathbf{R}^d)$ follows readily from an application of Taylor's formula. We omit the details.

For a $d\times d$ real valued matrix $\bsigma$, let $\bsigma=(\bsig_1, \bsig_2,\cdots, \bsig_d)$ with $\bsig_j\in \mathbf{R}^d$ denoting the $j$th column vector of $\bsigma$. Let $\bsigma^T$ be the transpose of $\bsigma$ and let $\blambda$ be a diagonal matrix with $\lambda_j$ in the $j$th position of the diagonal. Using \cref{eqn:SLscalar} we immediately get for all $x \in \mathbf{R}^d$:
\begin{align}\label{eqn:SLvector}
\bsigma \blambda \bsigma^T:D^2\phi(x) &=\trace \big[\bsigma \blambda \bsigma^T D^2\phi(x) \bigr] 
=\sum_{j=1}^d \trace \big[\lambda_j \sigma_j\sigma_j^T D^2\phi(x) \bigr] 
\\
&=\sum_{j=1}^d \lambda_j \frac{\phi(x-k\bsig_j) -2\phi(x) + \phi(x+k\bsig_j)}{k^2} + O(k^2) 
, \nonumber
\end{align}
where $A:B$ stands for the Frobenius inner product between two matrices $A$ and $B$. It is an important feature of \cref{eqn:SLvector} that the explicit finite difference discretization of mixed derivatives is avoided in order to build monotonicity into the scheme.

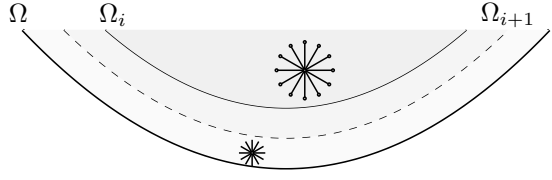
\begin{figure}
\begin{center}
\begin{tikzpicture}[
  line width=0.2mm,
  >=stealth',
  dot/.style = {
   draw,
   fill = white,
   circle,
   inner sep = 0pt,
   minimum size = 1pt
  }
 ]
 \draw[domain=-3.5:3.5, samples=50, very thick] plot (\x, {0.15*\x*\x});
 \fill[gray!4] (-3.5, 1.8375) -- (-3.465, 1.80093) -- (-3.43, 1.76474) -- (-3.395, 1.7289) -- (-3.36, 1.69344) -- (-3.325, 1.65834) -- (-3.29, 1.62361) -- (-3.255, 1.58925) -- (-3.22, 1.55526) -- (-3.185, 1.52163) -- (-3.15, 1.48837) -- (-3.115, 1.45548) -- (-3.08, 1.42296) -- (-3.045, 1.3908) -- (-3.01, 1.35901) -- (-2.975, 1.32759) -- (-2.94, 1.29654) -- (-2.905, 1.26585) -- (-2.87, 1.23554) -- (-2.835, 1.20558) -- (-2.8, 1.176) -- (-2.765, 1.14678) -- (-2.73, 1.11794) -- (-2.695, 1.08945) -- (-2.66, 1.06134) -- (-2.625, 1.03359) -- (-2.59, 1.00621) -- (-2.555, 0.979204) -- (-2.52, 0.95256) -- (-2.485, 0.926284) -- (-2.45, 0.900375) -- (-2.415, 0.874834) -- (-2.38, 0.84966) -- (-2.345, 0.824854) -- (-2.31, 0.800415) -- (-2.275, 0.776344) -- (-2.24, 0.75264) -- (-2.205, 0.729304) -- (-2.17, 0.706335) -- (-2.135, 0.683734) -- (-2.1, 0.6615) -- (-2.065, 0.639634) -- (-2.03, 0.618135) -- (-1.995, 0.597004) -- (-1.96, 0.57624) -- (-1.925, 0.555844) -- (-1.89, 0.535815) -- (-1.855, 0.516154) -- (-1.82, 0.49686) -- (-1.785, 0.477934) -- (-1.75, 0.459375) -- (-1.715, 0.441184) -- (-1.68, 0.42336) -- (-1.645, 0.405904) -- (-1.61, 0.388815) -- (-1.575, 0.372094) -- (-1.54, 0.35574) -- (-1.505, 0.339754) -- (-1.47, 0.324135) -- (-1.435, 0.308884) -- (-1.4, 0.294) -- (-1.365, 0.279484) -- (-1.33, 0.265335) -- (-1.295, 0.251554) -- (-1.26, 0.23814) -- (-1.225, 0.225094) -- (-1.19, 0.212415) -- (-1.155, 0.200104) -- (-1.12, 0.18816) -- (-1.085, 0.176584) -- (-1.05, 0.165375) -- (-1.015, 0.154534) -- (-0.98, 0.14406) -- (-0.945, 0.133954) -- (-0.91, 0.124215) -- (-0.875, 0.114844) -- (-0.84, 0.10584) -- (-0.805, 0.0972037) -- (-0.77, 0.088935) -- (-0.735, 0.0810337) -- (-0.7, 0.0735) -- (-0.665, 0.0663337) -- (-0.63, 0.059535) -- (-0.595, 0.0531037) -- (-0.56, 0.04704) -- (-0.525, 0.0413437) -- (-0.49, 0.036015) -- (-0.455, 0.0310537) -- (-0.42, 0.02646) -- (-0.385, 0.0222337) -- (-0.35, 0.018375) -- (-0.315, 0.0148837) -- (-0.28, 0.01176) -- (-0.245, 0.00900375) -- (-0.21, 0.006615) -- (-0.175, 0.00459375) -- (-0.14, 0.00294) -- (-0.105, 0.00165375) -- (-0.07, 0.000735) -- (-0.035, 0.00018375) -- (4.44089*10^-16, 2.95823*10^-32) -- (0.035, 0.00018375) -- (0.07, 0.000735) -- (0.105, 0.00165375) -- (0.14, 0.00294) -- (0.175, 0.00459375) -- (0.21, 0.006615) -- (0.245, 0.00900375) -- (0.28, 0.01176) -- (0.315, 0.0148838) -- (0.35, 0.018375) -- (0.385, 0.0222338) -- (0.42, 0.02646) -- (0.455, 0.0310538) -- (0.49, 0.036015) -- (0.525, 0.0413438) -- (0.56, 0.04704) -- (0.595, 0.0531038) -- (0.63, 0.059535) -- (0.665, 0.0663338) -- (0.7, 0.0735) -- (0.735, 0.0810338) -- (0.77, 0.088935) -- (0.805, 0.0972038) -- (0.84, 0.10584) -- (0.875, 0.114844) -- (0.91, 0.124215) -- (0.945, 0.133954) -- (0.98, 0.14406) -- (1.015, 0.154534) -- (1.05, 0.165375) -- (1.085, 0.176584) -- (1.12, 0.18816) -- (1.155, 0.200104) -- (1.19, 0.212415) -- (1.225, 0.225094) -- (1.26, 0.23814) -- (1.295, 0.251554) -- (1.33, 0.265335) -- (1.365, 0.279484) -- (1.4, 0.294) -- (1.435, 0.308884) -- (1.47, 0.324135) -- (1.505, 0.339754) -- (1.54, 0.35574) -- (1.575, 0.372094) -- (1.61, 0.388815) -- (1.645, 0.405904) -- (1.68, 0.42336) -- (1.715, 0.441184) -- (1.75, 0.459375) -- (1.785, 0.477934) -- (1.82, 0.49686) -- (1.855, 0.516154) -- (1.89, 0.535815) -- (1.925, 0.555844) -- (1.96, 0.57624) -- (1.995, 0.597004) -- (2.03, 0.618135) -- (2.065, 0.639634) -- (2.1, 0.6615) -- (2.135, 0.683734) -- (2.17, 0.706335) -- (2.205, 0.729304) -- (2.24, 0.75264) -- (2.275, 0.776344) -- (2.31, 0.800415) -- (2.345, 0.824854) -- (2.38, 0.84966) -- (2.415, 0.874834) -- (2.45, 0.900375) -- (2.485, 0.926284) -- (2.52, 0.95256) -- (2.555, 0.979204) -- (2.59, 1.00622) -- (2.625, 1.03359) -- (2.66, 1.06134) -- (2.695, 1.08945) -- (2.73, 1.11794) -- (2.765, 1.14678) -- (2.8, 1.176) -- (2.835, 1.20558) -- (2.87, 1.23554) -- (2.905, 1.26585) -- (2.94, 1.29654) -- (2.975, 1.32759) -- (3.01, 1.35902) -- (3.045, 1.3908) -- (3.08, 1.42296) -- (3.115, 1.45548) -- (3.15, 1.48838) -- (3.185, 1.52163) -- (3.22, 1.55526) -- (3.255, 1.58925) -- (3.29, 1.62362) -- (3.325, 1.65834) -- (3.36, 1.69344) -- (3.395, 1.7289) -- (3.43, 1.76474) -- (3.465, 1.80093) -- (3.5, 1.8375) -- cycle;
 \draw[domain=-2.95163:2.95163, samples=50, dashed] plot (\x, {0.165*\x*\x+0.4});
 \fill[gray!8] (-2.95163, 1.8375) -- (-2.92211, 1.80889) -- (-2.8926, 1.78057) -- (-2.86308, 1.75254) -- (-2.83356, 1.7248) -- (-2.80405, 1.69734) -- (-2.77453, 1.67017) -- (-2.74502, 1.64329) -- (-2.7155, 1.6167) -- (-2.68598, 1.59039) -- (-2.65647, 1.56437) -- (-2.62695, 1.53864) -- (-2.59743, 1.5132) -- (-2.56792, 1.48804) -- (-2.5384, 1.46317) -- (-2.50889, 1.43859) -- (-2.47937, 1.4143) -- (-2.44985, 1.39029) -- (-2.42034, 1.36657) -- (-2.39082, 1.34314) -- (-2.3613, 1.32) -- (-2.33179, 1.29714) -- (-2.30227, 1.27457) -- (-2.27276, 1.25229) -- (-2.24324, 1.2303) -- (-2.21372, 1.20859) -- (-2.18421, 1.18717) -- (-2.15469, 1.16604) -- (-2.12517, 1.1452) -- (-2.09566, 1.12464) -- (-2.06614, 1.10437) -- (-2.03662, 1.08439) -- (-2.00711, 1.0647) -- (-1.97759, 1.04529) -- (-1.94808, 1.02617) -- (-1.91856, 1.00734) -- (-1.88904, 0.9888) -- (-1.85953, 0.970544) -- (-1.83001, 0.952575) -- (-1.80049, 0.934894) -- (-1.77098, 0.9175) -- (-1.74146, 0.900394) -- (-1.71195, 0.883575) -- (-1.68243, 0.867044) -- (-1.65291, 0.8508) -- (-1.6234, 0.834844) -- (-1.59388, 0.819175) -- (-1.56436, 0.803794) -- (-1.53485, 0.7887) -- (-1.50533, 0.773894) -- (-1.47582, 0.759375) -- (-1.4463, 0.745144) -- (-1.41678, 0.7312) -- (-1.38727, 0.717544) -- (-1.35775, 0.704175) -- (-1.32823, 0.691094) -- (-1.29872, 0.6783) -- (-1.2692, 0.665794) -- (-1.23968, 0.653575) -- (-1.21017, 0.641644) -- (-1.18065, 0.63) -- (-1.15114, 0.618644) -- (-1.12162, 0.607575) -- (-1.0921, 0.596794) -- (-1.06259, 0.5863) -- (-1.03307, 0.576094) -- (-1.00355, 0.566175) -- (-0.974038, 0.556544) -- (-0.944522, 0.5472) -- (-0.915005, 0.538144) -- (-0.885489, 0.529375) -- (-0.855973, 0.520894) -- (-0.826456, 0.5127) -- (-0.79694, 0.504794) -- (-0.767424, 0.497175) -- (-0.737907, 0.489844) -- (-0.708391, 0.4828) -- (-0.678875, 0.476044) -- (-0.649359, 0.469575) -- (-0.619842, 0.463394) -- (-0.590326, 0.4575) -- (-0.56081, 0.451894) -- (-0.531293, 0.446575) -- (-0.501777, 0.441544) -- (-0.472261, 0.4368) -- (-0.442744, 0.432344) -- (-0.413228, 0.428175) -- (-0.383712, 0.424294) -- (-0.354196, 0.4207) -- (-0.324679, 0.417394) -- (-0.295163, 0.414375) -- (-0.265647, 0.411644) -- (-0.23613, 0.4092) -- (-0.206614, 0.407044) -- (-0.177098, 0.405175) -- (-0.147581, 0.403594) -- (-0.118065, 0.4023) -- (-0.0885489, 0.401294) -- (-0.0590326, 0.400575) -- (-0.0295163, 0.400144) -- (0., 0.4) -- (0.0295163, 0.400144) -- (0.0590326, 0.400575) -- (0.0885489, 0.401294) -- (0.118065, 0.4023) -- (0.147582, 0.403594) -- (0.177098, 0.405175) -- (0.206614, 0.407044) -- (0.23613, 0.4092) -- (0.265647, 0.411644) -- (0.295163, 0.414375) -- (0.324679, 0.417394) -- (0.354196, 0.4207) -- (0.383712, 0.424294) -- (0.413228, 0.428175) -- (0.442744, 0.432344) -- (0.472261, 0.4368) -- (0.501777, 0.441544) -- (0.531293, 0.446575) -- (0.56081, 0.451894) -- (0.590326, 0.4575) -- (0.619842, 0.463394) -- (0.649359, 0.469575) -- (0.678875, 0.476044) -- (0.708391, 0.4828) -- (0.737907, 0.489844) -- (0.767424, 0.497175) -- (0.79694, 0.504794) -- (0.826456, 0.5127) -- (0.855973, 0.520894) -- (0.885489, 0.529375) -- (0.915005, 0.538144) -- (0.944522, 0.5472) -- (0.974038, 0.556544) -- (1.00355, 0.566175) -- (1.03307, 0.576094) -- (1.06259, 0.5863) -- (1.0921, 0.596794) -- (1.12162, 0.607575) -- (1.15114, 0.618644) -- (1.18065, 0.63) -- (1.21017, 0.641644) -- (1.23968, 0.653575) -- (1.2692, 0.665794) -- (1.29872, 0.6783) -- (1.32823, 0.691094) -- (1.35775, 0.704175) -- (1.38727, 0.717544) -- (1.41678, 0.7312) -- (1.4463, 0.745144) -- (1.47582, 0.759375) -- (1.50533, 0.773894) -- (1.53485, 0.7887) -- (1.56436, 0.803794) -- (1.59388, 0.819175) -- (1.6234, 0.834844) -- (1.65291, 0.8508) -- (1.68243, 0.867044) -- (1.71195, 0.883575) -- (1.74146, 0.900394) -- (1.77098, 0.9175) -- (1.80049, 0.934894) -- (1.83001, 0.952575) -- (1.85953, 0.970544) -- (1.88904, 0.9888) -- (1.91856, 1.00734) -- (1.94808, 1.02617) -- (1.97759, 1.04529) -- (2.00711, 1.0647) -- (2.03662, 1.08439) -- (2.06614, 1.10437) -- (2.09566, 1.12464) -- (2.12517, 1.1452) -- (2.15469, 1.16604) -- (2.18421, 1.18717) -- (2.21372, 1.20859) -- (2.24324, 1.2303) -- (2.27276, 1.25229) -- (2.30227, 1.27457) -- (2.33179, 1.29714) -- (2.3613, 1.32) -- (2.39082, 1.34314) -- (2.42034, 1.36657) -- (2.44985, 1.39029) -- (2.47937, 1.4143) -- (2.50889, 1.43859) -- (2.5384, 1.46317) -- (2.56792, 1.48804) -- (2.59743, 1.5132) -- (2.62695, 1.53864) -- (2.65647, 1.56437) -- (2.68598, 1.59039) -- (2.7155, 1.6167) -- (2.74502, 1.64329) -- (2.77453, 1.67017) -- (2.80405, 1.69734) -- (2.83356, 1.7248) -- (2.86308, 1.75254) -- (2.8926, 1.78057) -- (2.92211, 1.80889) -- (2.95163, 1.8375) -- cycle; 
 \draw[domain=-2.4:2.4, samples=50] plot (\x, {0.18*\x*\x+0.8});
 \fill[gray!12] (-2.4, 1.8368) -- (-2.376, 1.81617) -- (-2.352, 1.79574) -- (-2.328, 1.77553) -- (-2.304, 1.75551) -- (-2.28, 1.73571) -- (-2.256, 1.71612) -- (-2.232, 1.69673) -- (-2.208, 1.67755) -- (-2.184, 1.65857) -- (-2.16, 1.63981) -- (-2.136, 1.62125) -- (-2.112, 1.6029) -- (-2.088, 1.58475) -- (-2.064, 1.56682) -- (-2.04, 1.54909) -- (-2.016, 1.53157) -- (-1.992, 1.51425) -- (-1.968, 1.49714) -- (-1.944, 1.48024) -- (-1.92, 1.46355) -- (-1.896, 1.44707) -- (-1.872, 1.43079) -- (-1.848, 1.41472) -- (-1.824, 1.39886) -- (-1.8, 1.3832) -- (-1.776, 1.36775) -- (-1.752, 1.35251) -- (-1.728, 1.33748) -- (-1.704, 1.32265) -- (-1.68, 1.30803) -- (-1.656, 1.29362) -- (-1.632, 1.27942) -- (-1.608, 1.26542) -- (-1.584, 1.25163) -- (-1.56, 1.23805) -- (-1.536, 1.22467) -- (-1.512, 1.21151) -- (-1.488, 1.19855) -- (-1.464, 1.18579) -- (-1.44, 1.17325) -- (-1.416, 1.16091) -- (-1.392, 1.14878) -- (-1.368, 1.13686) -- (-1.344, 1.12514) -- (-1.32, 1.11363) -- (-1.296, 1.10233) -- (-1.272, 1.09124) -- (-1.248, 1.08035) -- (-1.224, 1.06967) -- (-1.2, 1.0592) -- (-1.176, 1.04894) -- (-1.152, 1.03888) -- (-1.128, 1.02903) -- (-1.104, 1.01939) -- (-1.08, 1.00995) -- (-1.056, 1.00072) -- (-1.032, 0.991704) -- (-1.008, 0.982892) -- (-0.984, 0.974286) -- (-0.96, 0.965888) -- (-0.936, 0.957697) -- (-0.912, 0.949714) -- (-0.888, 0.941938) -- (-0.864, 0.934369) -- (-0.84, 0.927008) -- (-0.816, 0.919854) -- (-0.792, 0.912908) -- (-0.768, 0.906168) -- (-0.744, 0.899636) -- (-0.72, 0.893312) -- (-0.696, 0.887195) -- (-0.672, 0.881285) -- (-0.648, 0.875583) -- (-0.624, 0.870088) -- (-0.6, 0.8648) -- (-0.576, 0.85972) -- (-0.552, 0.854847) -- (-0.528, 0.850181) -- (-0.504, 0.845723) -- (-0.48, 0.841472) -- (-0.456, 0.837428) -- (-0.432, 0.833592) -- (-0.408, 0.829964) -- (-0.384, 0.826542) -- (-0.36, 0.823328) -- (-0.336, 0.820321) -- (-0.312, 0.817522) -- (-0.288, 0.81493) -- (-0.264, 0.812545) -- (-0.24, 0.810368) -- (-0.216, 0.808398) -- (-0.192, 0.806636) -- (-0.168, 0.80508) -- (-0.144, 0.803732) -- (-0.12, 0.802592) -- (-0.096, 0.801659) -- (-0.072, 0.800933) -- (-0.048, 0.800415) -- (-0.024, 0.800104) -- (0., 0.8) -- (0.024, 0.800104) -- (0.048, 0.800415) -- (0.072, 0.800933) -- (0.096, 0.801659) -- (0.12, 0.802592) -- (0.144, 0.803732) -- (0.168, 0.80508) -- (0.192, 0.806636) -- (0.216, 0.808398) -- (0.24, 0.810368) -- (0.264, 0.812545) -- (0.288, 0.81493) -- (0.312, 0.817522) -- (0.336, 0.820321) -- (0.36, 0.823328) -- (0.384, 0.826542) -- (0.408, 0.829964) -- (0.432, 0.833592) -- (0.456, 0.837428) -- (0.48, 0.841472) -- (0.504, 0.845723) -- (0.528, 0.850181) -- (0.552, 0.854847) -- (0.576, 0.85972) -- (0.6, 0.8648) -- (0.624, 0.870088) -- (0.648, 0.875583) -- (0.672, 0.881285) -- (0.696, 0.887195) -- (0.72, 0.893312) -- (0.744, 0.899636) -- (0.768, 0.906168) -- (0.792, 0.912908) -- (0.816, 0.919854) -- (0.84, 0.927008) -- (0.864, 0.934369) -- (0.888, 0.941938) -- (0.912, 0.949714) -- (0.936, 0.957697) -- (0.96, 0.965888) -- (0.984, 0.974286) -- (1.008, 0.982892) -- (1.032, 0.991704) -- (1.056, 1.00072) -- (1.08, 1.00995) -- (1.104, 1.01939) -- (1.128, 1.02903) -- (1.152, 1.03888) -- (1.176, 1.04894) -- (1.2, 1.0592) -- (1.224, 1.06967) -- (1.248, 1.08035) -- (1.272, 1.09124) -- (1.296, 1.10233) -- (1.32, 1.11363) -- (1.344, 1.12514) -- (1.368, 1.13686) -- (1.392, 1.14878) -- (1.416, 1.16091) -- (1.44, 1.17325) -- (1.464, 1.18579) -- (1.488, 1.19855) -- (1.512, 1.21151) -- (1.536, 1.22467) -- (1.56, 1.23805) -- (1.584, 1.25163) -- (1.608, 1.26542) -- (1.632, 1.27942) -- (1.656, 1.29362) -- (1.68, 1.30803) -- (1.704, 1.32265) -- (1.728, 1.33748) -- (1.752, 1.35251) -- (1.776, 1.36775) -- (1.8, 1.3832) -- (1.824, 1.39886) -- (1.848, 1.41472) -- (1.872, 1.43079) -- (1.896, 1.44707) -- (1.92, 1.46355) -- (1.944, 1.48024) -- (1.968, 1.49714) -- (1.992, 1.51425) -- (2.016, 1.53157) -- (2.04, 1.54909) -- (2.064, 1.56682) -- (2.088, 1.58475) -- (2.112, 1.6029) -- (2.136, 1.62125) -- (2.16, 1.63981) -- (2.184, 1.65857) -- (2.208, 1.67755) -- (2.232, 1.69673) -- (2.256, 1.71612) -- (2.28, 1.73571) -- (2.304, 1.75551) -- (2.328, 1.77553) -- (2.352, 1.79574) -- (2.376, 1.81617) -- (2.4, 1.8368) -- cycle;
 \node at (-3.5,2.03) {$\Omega_{\phantom{i}}$};
 \node at (2.95163,2.03) {$\Omega_{i+1}$};
 \node at (-2.3,2.03) {$\Omega_{i}$};
 \begin{scope}[shift={(-0.45,0.2)},scale=0.07]
 \foreach \x in {0,30,60,90,120,150,180,210,240,270,300,330} {\draw (0,0) to (\x:2.5);};
 \node [dot] at (0.0:0.0) {};
 \end{scope}
 \begin{scope}[shift={(0.25,1.3)},scale=0.15]
 \foreach \x in {0,30,60,90,120,150,180,210,240,270,300,330} {\draw (0,0) to (\x:2.5); \node [dot] at (\x:2.5) {};};
 \node [dot] at (0.0:0.0) {};
 \end{scope}
\end{tikzpicture}
\caption{\blue{The $\Omega_i$ form a covering of $\Omega$. On each of the $\Ome_i$ the Wasow-Motzkin consistency condition `$h/k \to 0$' is implemented uniformly; near the boundary this is not enforced as local stencils are rescaled so that they do not extend out of the computational domain---illustrated by two cartoon stenils in the figure.}}
\label{fig:subdomains}
\end{center}
\end{figure}

\blue{The choice of $k$ depends on $h$ and $x$: 
\begin{enumerate}[(a)]
\item It is known as Wasow-Motzkin theorem \cite[Theorem 1]{WasowMotzkin53} that in order to achieve consistency with equations like \cref{EMAeq,EHJBeq} simultaneously with monotonicity, the mesh size $h$ has to decrease locally strictly faster than the stencil size $k$, see also \cite{Kocan95}. Therefore we expect $k$ to decrease as the mesh size $h$ shrinks, but within this `$h/k \to 0$' limitation. {\em In other words, the Wasow-Motzkin theorem implies that any monotone consistent method has to be a wide stencil scheme.}
\item Observe that if $\phi\in C^4(\oOme)$ then \cref{eqn:SLvector} remains valid as long as the stencil size $k$ is chosen small enough so that the stencil does not extend out of the domain. Hence near the boundary the size of $k$ needs to be reduced to the size of $h$ for $x \in \cN_h^I$. This makes $k$ dependent on $x$.
\end{enumerate}
A specific choice for $k$ is given in \cref{ex:k} below. In general, condition (b) is reflected by the requirement that
\[
k : \; L^\infty(\Omega) \times \Ome \to (0,\infty), \; (h,x) \mapsto k(h,x)
\]
is a function such that $x-k(h,x) \, \bsig_j$ and $x+k(h,x) \, \bsig_j$ are in $\oOme_h$ for all mesh functions $h$ and $x \in \Ome_h$ and $\sigma_j$. Condition (a) is in conflict with this as (b) implies that $h$ cannot decrease faster than $k$ near $\partial \Omega$. Therefore we shall impose Wasow-Motzkin limitation uniformly only on the subsets 
\[
\Omega_i = \left\{ x \in \Omega : {\rm distance}(x,\p\Ome) > {\textstyle \frac{1}{i}} \right\},
\]
illustrated in \cref{fig:subdomains}, see also the related \cref{stencils}. Thus on each $\Omega_i$ we require
\begin{align} \label{eq:cons_scaling}
\sup_{x \in \Ome_i} \frac{h(x)}{k(h,x)} \to 0 \qquad \text{as} \qquad \hmax \to 0,
\end{align}
recalling that $\hmax$ is the largest diameter of an element of the mesh.
\red{Furthermore, we shall assume that on each $\Omega_i$ the stencil size $k$ is eventually a constant function: for every $i \in \bN$ there is an $h'$ so that $x \mapsto k(h,x)$ is a constant function on $\Omega_i$ whenever $\| h \|_\infty < h'$.} Moreover, we assume that the stencil size $k$ shrinks uniformly, meaning that on the whole domain~$\Omega$
\begin{align} \label{eq:krefinement}
\sup_{x \in \Ome} k(h,x) \to 0 \qquad \text{as} \qquad \hmax \to 0.
\end{align}}
\begin{remark} \label{ex:k}
\blue{As prototypical choice for $k$ we have in mind that
\begin{align} \label{samplek}
k(h,x) = \min \{ \kappa(\hmax), {\rm distance}(x,\p\Ome) \} \qquad \forall \, x \in \Ome
\end{align}
for some $\kappa : (0,\infty) \to (0,\infty)$ with $\xi / \kappa(\xi) \to 0$ and $\kappa(\xi) \to 0$ as $\xi \to 0$, e.g.~\mbox{$\kappa(\xi)=\sqrt{\xi}$}. Observe that once $\kappa(\hmax) < \frac{1}{i}$ then $k = \kappa(\hmax)$ is constant on the restriction to $\Omega_i$ because there $\kappa(\hmax) < {\rm distance}(x,\p\Ome)$. Since the calculation of ${\rm distance}(x,\p\Ome)$ can be computationally expensive, one should in practice implement an approximation of \cref{samplek} satisfying \cref{eq:cons_scaling} and \cref{eq:krefinement}.}
\end{remark}

To discretize the linear operators
\begin{align} \label{eq:LB}
\phi \mapsto -B:D^2\phi(x) + f \sqrt[d]{\det B},
\end{align}
which are found under the supremum of \cref{HJB_operator}, we choose factorizations $B = \bsigma \blambda \bsigma^T$ for each $B \in \bS_1$. More precisely, we consider some compact set
\[
\bF \subset \mathbf R^{d \times d} \times \{ A \in \mathbf R^{d \times d} : A \text{ diagonal} \}
\]
such that the mapping
\begin{align} \label{factorisation} 
\bF \to \bS_1, \; (\bsigma, \blambda) \mapsto \bsigma \, \blambda \, \bsigma^T
\end{align}
is bijective. \red{Moreover we assume that all $\blambda$ have the same trace:
\begin{align} \label{eq:trace}
\exists \, C > 0 \, \forall \, (\bsigma, \blambda) \in \bF: \, \trace \blambda = C.
\end{align}}
The existence of such $\bsigma$ and $\blambda$ follows from the symmetry of~$B$. We remark that strictly speaking \cref{factorisation} only needs to be surjective for the subsequent analysis; however, without injectivity the notation becomes more cumbersome as more than one factorization represents a single $B$. We remark that our analysis also extends to direction dependent $k = k(h,x,\sigma_j)$, owing to the compactness of $\bF$.

At this point there is considerable flexibility in the selection of $\bF$. We discuss concrete choices in \cref{sec-ps}, after examining the well-posedness of the discrete equations in \cref{sec-dwp} and the convergence of numerical solutions in \cref{sec:convergence}.

The approximation of \cref{eq:LB} is the mapping $L_h^B : \; \bR \times B(\oOme) \to B(\oOme)$, where for any $\phi\in B(\oOme)$ the value $L_h^B(s, \phi)(x_i)$ at internal node $x_i \in \cN_h^I$ is set to be
\begin{align} \label{eq:L_pw_disc}
- \sum_{j=1}^d \lambda_j \frac{\phi(x_i-k\bsig_j) -2 s + \phi(x_i+k\bsig_j)}{k^2} + f(x_i) \sqrt[d]{\det B},
\end{align}
where $\bsigma = \bsigma(B)$ and $\blambda = \blambda(B)$ come from inversion of \cref{factorisation}. Recall that $k = k(h,x)$ is chosen so that $x-k(h,x) \, \bsig_j$ and $x+k(h,x) \, \bsig_j$ are in $\oOme_h$. Also observe how $s$ takes, in comparison with \cref{eqn:SLvector}, the place of $\phi(x_i)$. The auxiliary variable $s$ allows us to express the monotonicity of the scheme efficiently in the language of the Barles-Souganidis framework \cite{Barles_Souganidis91}, on which we model our proof of convergence. The value $L_h^B(s, \phi)(x_i)$ for boundary nodes $x_i \in \cN_h^B$ is
\begin{align} \label{eq:L_pw_disc_bc}
s - g(x_i).
\end{align}
At nodes $x_i \in \cN_h$, the Bellman operator $H$ is represented approximately by
\[
H_h(s, \phi)(x_i) = \sup_{B \in \bS_1} L_h^B(s, \phi)(x_i).
\]
For the remaining $x \in \Ome_h \setminus \cN_h$ the value of $H_h(s, \phi)(x)$ is defined by piecewise linear interpolation of the nodal values, so that we have a mapping
\begin{align} \label{eq:H_pw_disc}
H_h : \; \bR \times B(\oOme) \to B(\oOme),
\end{align}
upon constant extension in the normal direction for $x \in \oOme \setminus \Ome_h$, recall \cref{fig:computationaldomain}.

Finally, our numerical scheme for \cref{EHJB} is defined as seeking $u_h\in V_h$ such that
\begin{align} \label{eq:disc}
H_h(u_h(x_i), u_h)(x_i) = 0 \qquad \forall \,x_i \in \cN_h.
\end{align}

\section{Well-posedness of the discrete equations}\label{sec-dwp}

A common technique to show the well-posedness of a nonlinear system such as \cref{eq:disc} is to formulate a fixed point argument akin to a pseudo-time Euler scheme \cite{Debrabant_Jakobsen12,Oberman08}. However, to take advantage of the monotone discretization of the Bellman equation, we use instead Howard's algorithm \cite{BMZ09,Howard60} to establish the existence and uniqueness of numerical solutions. This algorithm, being globally superlinearly converging, is also used to compute the numerical solutions of our numerical experiments in \cref{sec:experiments}.

Let 
\[
\bB = (B_1, B_2, \ldots, B_{J_0}) = (\bsigma^{(1)} \blambda^{(1)} (\bsigma^{(1)})^T, \bsigma^{(2)} \blambda^{(2)} (\bsigma^{(2)})^T, \ldots, \bsigma^{(J_0)} \blambda^{(J_0)} (\bsigma^{(J_0)})^T)
\]
be an element of $\bS_1^{J_0}$. Then $\rL_h^\bB : \; V_h \to V_h$ discretizes $\phi \mapsto -B_i :D^2\phi(x)$ at the internal nodes as
\begin{align} \label{eq:VL_pw_disc}
\rL_h^\bB(\phi)(x_i) = \begin{cases}
- \sum_{j=1}^d \lambda^{(i)}_j \frac{\phi(x_i-k\bsig^{(i)}_j) -2 \phi(x_i) + \phi(x_i+k\bsig^{(i)}_j)}{k^2} & : \; x_i \in \cN_h^I,\\[1mm]
\phi(x_i) & : \; x_i \in \cN_h^B.
\end{cases}
\end{align}
Similarly, we set
\begin{align} \label{eq:VF_pw_disc}
\rF_h^\bB(x_i) = \begin{cases}
f(x_i) \sqrt[d]{\det B_i} & : \; x_i \in \cN_h^I,\\[1mm]
- g(x_i) & : \; x_i \in \cN_h^B.
\end{cases}
\end{align}
For the remaining $x \in \overline{\Ome_h} \setminus \cN_h$ the values of $\rL_h^\bB(\phi)(x)$ and $\rF_h^\bB(x)$ are defined by piecewise linear interpolation of the nodal values. It is worthwhile to bring the differences between $\rL_h^\bB$ in \cref{eq:VL_pw_disc} and $L_h^B$ in \cref{eq:L_pw_disc} to mind. While the former has the right structure for the \blue{finite-dimensional} analysis of Howard's algorithm, the latter mirrors the Barles--Souganidis formulation \blue{with the additional argument $s$ to examine the monotonicity property efficiently}.

\begin{lemma} \label{lin_nonneg}
Let $\bB \in \bS_1^{J_0}$ and $h > 0$. If $\rL_h^\bB v \le 0$ then $v$ attains its maximum at a boundary node. Moreover, the representation of the mapping $\rL_h^\bB$ as \blue{a matrix} $A$, using the linear finite element hat functions as basis, is an invertible M-matrix.
\end{lemma}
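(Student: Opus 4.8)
The plan is to analyze the matrix $A$ that represents $\rL_h^\bB$ on $V_h$ in the hat-function basis, row by row according to whether the node $x_i$ is a boundary node or an interior node, and to verify directly the three properties that characterize an invertible M-matrix: nonnegative diagonal entries, nonpositive off-diagonal entries, and strict diagonal dominance (which, together with irreducibility or a connectivity argument, yields invertibility). First I would write out the entries of $A$ explicitly. For a boundary node $x_i \in \cN_h^B$, the definition \cref{eq:VL_pw_disc} gives $\rL_h^\bB(\phi)(x_i) = \phi(x_i)$, so the $i$-th row of $A$ is simply $e_i^T$: diagonal entry $1$, all off-diagonal entries $0$. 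For an interior node $x_i \in \cN_h^I$, the value $\rL_h^\bB(\phi)(x_i)$ is the finite-difference expression
\[
- \sum_{j=1}^d \lambda^{(i)}_j \frac{\phi(x_i-k\bsig^{(i)}_j) -2 \phi(x_i) + \phi(x_i+k\bsig^{(i)}_j)}{k^2}.
\]
Here the subtlety is that the stencil points $x_i \pm k \bsig^{(i)}_j$ need not be grid points; by construction $k = k(h,x_i)$ is chosen so that these points lie in $\oOme_h$, and there the value $\phi(x_i \pm k \bsig^{(i)}_j)$ is obtained by piecewise linear interpolation, i.e.~a convex combination $\sum_m \mu_m \phi(x_m)$ with $\mu_m \ge 0$ and $\sum_m \mu_m = 1$ over the vertices $x_m$ of the element containing the stencil point. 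Consequently the $i$-th row of $A$ has diagonal entry $\frac{2}{k^2}\sum_{j=1}^d \lambda^{(i)}_j = \frac{2C}{k^2} > 0$ by the trace normalization \cref{eq:trace}, and each off-diagonal entry is a nonpositive combination $-\frac{1}{k^2}\sum_j \lambda^{(i)}_j(\mu_m^{+,j} + \mu_m^{-,j}) \le 0$, since $\lambda^{(i)}_j \ge 0$ (as $B_i \in \bS_1 \subset \bS_+$ forces nonnegative eigenvalues $\lambda^{(i)}_j$) and the interpolation weights are nonnegative. This establishes that $A$ is a $Z$-matrix with nonnegative diagonal.

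The next step is the monotonicity statement: if $\rL_h^\bB v \le 0$ then $v$ attains its maximum at a boundary node. This I would prove by the usual discrete maximum principle argument. Suppose $v$ attains its maximum at an interior node $x_i \in \cN_h^I$, with value $v_{\max}$. Evaluating the row,
\[
0 \ge (\rL_h^\bB v)(x_i) = \frac{1}{k^2}\sum_{j=1}^d \lambda^{(i)}_j\Bigl( 2 v(x_i) - v(x_i - k\bsig^{(i)}_j) - v(x_i + k\bsig^{(i)}_j)\Bigr).
\]
Each interpolated value $v(x_i \pm k\bsig^{(i)}_j)$ is a convex combination of nodal values, hence bounded above by $v_{\max} = v(x_i)$; therefore each bracketed term $2 v(x_i) - v(x_i - k\bsig^{(i)}_j) - v(x_i + k\bsig^{(i)}_j) \ge 0$, and the weighted sum is $\le 0$ only if it is $0$, which forces $v(x_i \pm k\bsig^{(i)}_j) = v(x_i)$ for every $j$ with $\lambda^{(i)}_j > 0$. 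Since $\trace \blambda^{(i)} = C > 0$ at least one such $j$ exists, and the $d$ directions $\bsig^{(i)}_j$ span $\bR^d$ (the factorization $B_i = \bsigma^{(i)}\blambda^{(i)}(\bsigma^{(i)})^T$ together with $\trace B_i = 1$ prevents $\bsigma^{(i)}$ from being singular in the relevant directions); propagating the equality to all nodal values in the supporting elements and then across the connected computational domain $\Ome_h$ shows $v$ is constant, so in particular it attains its maximum at a boundary node as well. (If $\cN_h^B$ is empty the claim is vacuous; since $\Ome_h \subset \Ome$ is bounded, $\cN_h^B \ne \emptyset$.)

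Finally I would deduce that $A$ is an invertible M-matrix. We have already shown $A$ is a $Z$-matrix. For invertibility it suffices to show $A$ has no kernel: if $Av = 0$ then both $\rL_h^\bB v \le 0$ and $\rL_h^\bB(-v) \le 0$, so by the monotonicity statement both $v$ and $-v$ attain their maxima on $\cN_h^B$; but the boundary rows of $A$ read $v(x_i) = 0$ for all $x_i \in \cN_h^B$, whence $\max_{\oOme_h} v \le 0$ and $\max_{\oOme_h}(-v) \le 0$, i.e.~$v \equiv 0$. An invertible $Z$-matrix whose inverse is nonnegative is an M-matrix; nonnegativity of $A^{-1}$ follows from the same maximum principle applied to $\rL_h^\bB v = b$ with $b \ge 0$ entrywise (the interior rows give a sub-solution inequality $\rL_h^\bB(-v) \le 0$ off the nodes where $b$ is supported, and the boundary rows pin $-v \le 0$ on $\cN_h^B$; a standard comparison then yields $v \ge 0$). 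I expect the main obstacle to be the bookkeeping around the interpolation: one must be careful that the stencil points genuinely lie in $\oOme_h$ so that the piecewise linear interpolation weights are well defined and nonnegative, and that the graph whose vertices are $\cN_h$ and whose edges record "a nonzero interpolation weight at $x_m$ appears in row $i$" is connected enough to run the maximum-principle propagation down to the boundary — this is where the geometric assumptions on $\cT_h$, the choice of $k(h,x)$, and $\Ome_h \subset \Ome$ with $\cN_h^B \subset \p\Ome$ are used.
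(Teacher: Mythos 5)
There is a genuine gap in your discrete maximum principle step. You first assert that the directions $\bsig^{(i)}_j$ with $\lambda^{(i)}_j>0$ span $\bR^d$; this is false in general, since $\bS_1$ contains singular controls (e.g.\ $B_i=D_1$, which is admissible because it has trace one), in which case only a single direction is active at the node $x_i$. You then conclude constancy of $v$ by ``propagating across the connected computational domain,'' but the local argument only propagates the value $v(x_i)$ to those nodes that carry a positive interpolation weight in the stencils of the current maximum nodes; whether this stencil graph reaches a boundary node (or exhausts $\cN_h$) is precisely what must be proved, and you flag it yourself as ``the main obstacle'' without resolving it. The conclusion ``$v$ is constant'' is moreover stronger than what is true or needed. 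The paper avoids both issues with an extreme-point argument: let $X\subset\cN_h^I$ be the set of maximum nodes and let $x_i$ be an extreme point of its convex hull $C(X)$; the inequality $\rL_h^\bB v(x_i)\le 0$ forces $v(x_i\pm k\bsig^{(i)}_j)=v(x_i)$ for every $j$ with $\lambda^{(i)}_j\neq 0$, and since these interpolated values are convex combinations of nodal values not exceeding $v(x_i)$, every node with positive weight lies in $X$, so $x_i\pm k\bsig^{(i)}_j\in C(X)$. Because $\trace B_i=1$ there is some $j$ with $\lambda^{(i)}_j|\bsig^{(i)}_j|^2>0$, so $x_i$ is the midpoint of a nondegenerate segment in $C(X)$, contradicting extremality --- no spanning set of directions and no connectivity argument are required.

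The remainder of your proposal is essentially sound. The sign-pattern computation via the nonnegative interpolation weights and the kernel argument for invertibility coincide with the paper's reasoning (note only that a stencil point $x_i\pm k\bsig^{(i)}_j$ may lie in an element containing $x_i$ itself, so your explicit diagonal value $2C/k^2$ is not exact, and interior rows of $A$ have zero row sums, so the strict diagonal dominance mentioned in your opening plan is not available; the paper instead uses that $A+\epsilon\,\Id$ is strictly diagonally dominant and passes to the limit with the Berman--Plemmons characterizations). Your alternative route to the M-matrix property --- a $Z$-matrix with nonnegative inverse, the nonnegativity of $A^{-1}$ being read off from the maximum principle applied to $\rL_h^\bB v=b\ge 0$ --- is legitimate, but it rests entirely on the maximum principle, so the gap above must be closed before it can be used.
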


\begin{proof}
Let $X$ be the set of nodes where $v$ attains its maximum. Suppose that $X$ consists only of internal nodes, i.e.~$X \subset \cN_h^I$, and that $\rL_h^\bB v \le 0$ holds. Let $C(X)$ be the convex hull of $X$. Let $x_i$ be an extreme point of $C(X)$; it is clear that such $x_i$ exists, not least by the Krein-Milman theorem, and that it is a node. For each $1 \le j \le d$, the value $v(x_i \pm k\bsig_j^{(i)})$ is a weighted average of the nodal values of $v$ at the vertices $x_\ell$ of the finite element which contains $x_i \pm k\bsig_j^{(i)}$. It follows from $\rL_h^\bB v \le 0$ that $v$ is equal to $v(x_i)$ at all those nodes $x_\ell$ whenever~$\lambda_j \neq 0$, noting that there is at least one non-zero $\lambda_j$. Thus $x_i \pm k\bsig_j^{(i)} \in C(X)$, which contradicts that $x_i$ is an extreme point. Hence $X$ must contain a boundary node.

Suppose that $A v = 0$. Then $v$ attains its maximum and, considering the argument of the previous paragraph for $- v$, its minimum on the boundary. As the restriction of $A$ to boundary nodes is an identity map, it follows that $v = 0$. Hence $A$ is invertible.

Owing to \cref{eq:VL_pw_disc}, \blue{$A \in \mathbf{R}^{n \times n}$ and $a_{ij} \le 0$ for all $i \neq j$ and with $n=J_0$. Moreover}, $A + \epsilon \, {\rm Id}$ is strictly diagonally dominant for each $\epsilon > 0$. Therefore such $A + \epsilon \, {\rm Id}$ are M-matrices \cite[Theorem (2.3) with $(M_{35})$ on p.137]{BermanPlemmons} and hence $A$ is a M-matrix \cite[Theorem (2.3) with $(D_{15})$ on p.135]{BermanPlemmons}.  
\end{proof} 

It follows directly from the construction of the discrete Hamiltonian that the numerical scheme, defined in \cref{eq:disc}, is equivalent to
\begin{align*} 
\sup_{\bB \in \bS_1^{J_0}} \bigl[ \rL_h^\bB(u_h)(x_i) + \rF_h^\bB(x_i) \bigr] = 0 \qquad \forall \, x_i \in \cN_h.
\end{align*}
\blue{For the solution of \cref{eq:disc} we use \cref{alg:Howard}, known as Howard's method.}

\begin{algorithm}
\caption{Howard's method}
\label{alg:Howard}
\begin{algorithmic}[1]
\STATE{Select an arbitrary $\bB_0 \in \bS_1^{J_0}$}
\FOR{$\ell \in \bN$}
\STATE{Let $v^\ell$ be the solution of the affine equation $\rL_h^\bB(v^\ell) + \rF_h^\bB= 0$.}
\STATE{Set $\bB_{\ell+1} = {\rm argmax}_{\bB \in \bS_1^{J_0}} \,\bigl[ \rL_h^\bB(v^\ell) + \rF_h^\bB \bigr]$}
\ENDFOR
\end{algorithmic}
\end{algorithm}

\begin{theorem} \label{thm:disk_wp}
Let $h > 0$ and assume that $\bF$ is compact and \cref{factorisation} bijective. Then for every $g \in B(\p\Ome)$ there exists a unique numerical solution $u_h \in V_h$ of \cref{eq:disc}. Moreover, the sequence $(v^\ell)_\ell$ generated by Howard's algorithm converges monotonically decreasing and superlinearly to $u_h$ as $\ell \to \infty$.
\end{theorem}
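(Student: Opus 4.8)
The plan is to establish \cref{thm:disk_wp} by invoking the general convergence theory for Howard's algorithm (policy iteration) applied to monotone discretizations, using \cref{lin_nonneg} as the crucial structural input. The argument has two conceptual pieces: first, that the hypotheses of the abstract Howard's-algorithm convergence result are met, and second, that the limit of the iteration is indeed the unique solution of \cref{eq:disc}.

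First I would verify that the maximization over $\bB \in \bS_1^{J_0}$ defining each step is well-posed. Since $\bF$ is compact and the factorization map \cref{factorisation} is a bijection onto $\bS_1$, and since $\bB \mapsto \rL_h^\bB(\phi)(x_i)$ and $\bB \mapsto \rF_h^\bB(x_i)$ are continuous for fixed $\phi$ (the entries $\lambda_j^{(i)}$, $\bsig_j^{(i)}$ and $\sqrt[d]{\det B_i}$ depend continuously on $B_i$, and $k = k(h,x)$ is a fixed positive function for fixed $h$), the supremum in \cref{HJB_operator} and in Step 4 of \cref{alg:Howard} is attained, so the argmax is nonempty; this gives a well-defined (set-valued, but selectable) iteration. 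Next, for each fixed policy $\bB$, \cref{lin_nonneg} guarantees that the matrix $A = A(\bB)$ representing $\rL_h^\bB$ is an invertible M-matrix, so the affine equation in Step 3 has a unique solution $v^\ell \in V_h$, and moreover $A^{-1} \ge 0$ entrywise (a standard property of M-matrices), i.e.\ the solution map $\bB \mapsto v$ is monotone in the right-hand side. These two facts---compactness of the policy set with continuity of the data, and the uniform M-matrix property---are exactly the hypotheses under which the policy iteration of \cite{BMZ09,Howard60} is known to converge monotonically and superlinearly; I would cite \cite[Theorem 2.1 / Section 3]{BMZ09} for the precise statement, adapting notation.

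Then I would run the monotonicity/convergence argument concretely: by definition of $\bB_{\ell+1}$ as the argmax, $\rL_h^{\bB_{\ell+1}}(v^\ell) + \rF_h^{\bB_{\ell+1}} \ge \rL_h^{\bB_\ell}(v^\ell) + \rF_h^{\bB_\ell} = 0$, so $\rL_h^{\bB_{\ell+1}}(v^\ell - v^{\ell+1}) = \rL_h^{\bB_{\ell+1}}(v^\ell) + \rF_h^{\bB_{\ell+1}} \ge 0$; applying the minimum-principle half of \cref{lin_nonneg} (to $-(v^\ell - v^{\ell+1})$, using that the boundary values of $v^\ell$ and $v^{\ell+1}$ agree, both being $g$ at $\cN_h^B$) yields $v^{\ell+1} \le v^\ell$ on $\oOme$. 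Thus $(v^\ell)_\ell$ is monotonically decreasing. For a lower bound I would exhibit a fixed subsolution-type barrier: since $\bS_1^{J_0}$ is compact and all the affine solutions $\bB \mapsto v(\bB)$ range over a compact set (continuous image of a compact set, using invertibility of the M-matrices which varies continuously and stays bounded away from singular by the uniform diagonal-dominance estimate), the sequence is bounded below, hence converges pointwise to some $u_h \in V_h$. Passing to the limit, using continuity of $(\phi, \bB) \mapsto \rL_h^\bB(\phi) + \rF_h^\bB$ and compactness of the policy set to extract a convergent subsequence of $(\bB_\ell)$, shows $\sup_{\bB}[\rL_h^\bB(u_h) + \rF_h^\bB] = 0$, i.e.\ $u_h$ solves \cref{eq:disc}. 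Uniqueness follows the same way it is used inside the iteration: if $u_h, \tilde u_h$ both solve \cref{eq:disc} with optimal policies $\bB, \tilde\bB$, then $\rL_h^{\bB}(u_h - \tilde u_h) \le \rL_h^{\bB}(u_h) + \rF_h^{\bB} - (\rL_h^{\tilde\bB}(\tilde u_h) + \rF_h^{\tilde\bB}) = 0$ and symmetrically with roles swapped, so the maximum-principle statement of \cref{lin_nonneg} forces $u_h = \tilde u_h$ (their boundary nodal values both equal $g$).

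The superlinear rate is the part I expect to require the most care, and here I would lean on the semismooth-Newton interpretation of Howard's algorithm: policy iteration for $\sup_{\bB}[\rL_h^\bB(v) + \rF_h^\bB] = 0$ coincides with the semismooth Newton method applied to the (Lipschitz, semismooth, piecewise-smooth) map $v \mapsto \sup_{\bB}[\rL_h^\bB(v) + \rF_h^\bB]$, whose Clarke generalized Jacobians are exactly the matrices $A(\bB)$ for optimal (or near-optimal) $\bB$; these are uniformly invertible M-matrices by \cref{lin_nonneg}, which is precisely the nonsingularity condition ensuring local superlinear convergence of semismooth Newton, and combined with the global monotone convergence just established this upgrades to global superlinear convergence. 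The main obstacle, then, is not any single hard estimate but assembling these pieces carefully---in particular making the lower bound / boundedness of $(v^\ell)$ rigorous (which is where compactness of $\bF$ together with the uniform M-matrix bound from \cref{lin_nonneg} is essential) and citing the correct form of the abstract policy-iteration convergence theorem from \cite{BMZ09} so that the superlinear rate is genuinely justified rather than merely asserted.
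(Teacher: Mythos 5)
Your proposal is correct and takes essentially the same route as the paper: the paper likewise passes to the compact control set $\bF$ via bijectivity of \cref{factorisation}, notes continuity of $(\bsigma,\blambda)\mapsto \rL_h^\bB, \rF_h^\bB$, verifies the monotonicity hypothesis through the M-matrix property of \cref{lin_nonneg}, and then simply invokes Theorem 2.1 (existence, uniqueness, monotone convergence, with $\min$ replaced by $\max$) and Theorem 3.4 (superlinear rate) of \cite{BMZ09}, so the policy-iteration and semismooth-Newton details you spell out are precisely what those cited theorems package. One small slip worth fixing: in your uniqueness display the inequality should run the other way---optimality of $\bB$ at $u_h$ gives $\rL_h^{\bB}(\tilde u_h) + \rF_h^{\bB} \le 0 = \rL_h^{\bB}(u_h) + \rF_h^{\bB}$, hence $\rL_h^{\bB}(\tilde u_h - u_h) \le 0$, so $\tilde u_h \le u_h$ by \cref{lin_nonneg}, and swapping roles completes the argument.
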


\begin{proof}
Due to the bijectivity of \cref{factorisation} we may consider $\bF$ instead of $\bS_1$ as the set of controls. Clearly the mappings $(\bsigma, \blambda) \mapsto \rL_h^\bB$ are $(\bsigma, \blambda) \mapsto \rF_h^\bB$ are continuous. The monotonicity condition of \cite{BMZ09} is verified in above \cref{lin_nonneg}. The result now follows from Theorem 2.1 of \cite{BMZ09}, noting that the substitution of the $\min$ in (1.1) of \cite{BMZ09} by $\max$ reverses the direction of the monotone convergence. The superlinear rate follows from Theorem 3.4 of \cite{BMZ09}.
\end{proof}

Observe that this well-posedness result for the discrete Bellman problem does not require convexity of the domain---the proof of \cref{lin_nonneg} remains valid for non-convex $\Omega$, even though then possibly $C(X) \not\subset \Omega_h$, where $C(X)$ is defined in the proof of \cref{lin_nonneg}.

\section{Convergence analysis}\label{sec:convergence}

Comparison principles are a central component of the theory of viscosity solutions. With Perron's method they are commonly used to show existence of solutions. For the analysis of numerical methods, the Barles--Souganidis framework, which we loosely follow in this section, provides a convergence argument based on comparison of subsolutions and supersolutions.

Dirichlet boundary conditions pose here a particular difficulty. The strong comparison principle underlying the original Barles--Souganidis proof requires comparison of semi-continuous subsolutions and supersolutions, which obey boundary conditions in the viscosity sense. Yet, general degenerate elliptic equations usually only satisfy comparison of semi-continuous functions with pointwise Dirichlet conditions or comparison of continuous functions with viscosity Dirichlet conditions \cite[Section 7.C]{Crandall_Ishii_Lions92}. The combination as in the Barles--Souganidis framework without additional structure assumptions about the boundary value problem does in general not hold.

To resolve this mismatch we verify that the upper and lower semicontinuous envelopes of the numerical solutions satisfy the boundary conditions pointwise, at which point the Barles--Souganidis argument becomes in its essential steps available. In fact, this \cref{classicalBC} is the only place in our analysis where the convexity of the domain is used, being aware that a Barles-Souganidis argument is a proof of existence and uniqueness of viscosity solutions.

\blue{We introduce}
\begin{align} \label{eq:S_HJB}
S : \; \mathbf{R}_+\times \overline{\Ome}\times \mathbf{R} \times B(\overline{\Ome})\to \mathbf{R}, \; (\blue{\hmax},x,s,\phi) \mapsto \blue{\hmax} \, H_h(s,\phi)(x),
\end{align}
\blue{to match precisely the structure of the solution operator in (2.1) of \cite{Barles_Souganidis91}.}

\begin{lemma}\label{monotonicity}
The mapping $S$ is monotone in the sense that
\[
S(\blue{\hmax},x,s,u)\leq S(\blue{\hmax},x,s,v) \qquad \text{if} \qquad u\geq v 
\]
for all $\blue{\hmax} > 0$, $x \in \overline{\Ome}$, $s \in \mathbf{R}$ and $u,v\in B(\overline{\Ome})$.
\end{lemma}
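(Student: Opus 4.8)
The plan is to peel off, one layer at a time, the definitions that assemble $S$ and $H_h$ until the claim reduces to a one-line sign check. First I would use that $S(\hmax,x,s,\phi)=\hmax\,H_h(s,\phi)(x)$ with $\hmax>0$, so multiplication by $\hmax$ preserves inequalities and it suffices to prove $H_h(s,u)(x)\le H_h(s,v)(x)$ for every $x\in\overline{\Ome}$ whenever $u\ge v$. For $x\in\overline{\Ome}\setminus\Ome_h$ the value $H_h(s,\phi)(x)$ is, by construction, the value of $H_h(s,\phi)$ at the foot of the outer normal on $\p\Ome_h$, which is in turn a nodal value or a convex combination of nodal values; and for $x\in\Ome_h\setminus\cN_h$ the value $H_h(s,\phi)(x)$ is the piecewise-linear interpolant, i.e.\ a convex combination $\sum_\ell\mu_\ell\,H_h(s,\phi)(x_{i_\ell})$ with barycentric weights $\mu_\ell\ge 0$. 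All of these operations are monotone in the nodal data, so the assertion reduces to the nodal inequality $H_h(s,u)(x_i)\le H_h(s,v)(x_i)$ for all $x_i\in\cN_h$.

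Next, at a node $H_h(s,\phi)(x_i)=\sup_{B\in\bS_1}L_h^B(s,\phi)(x_i)$, and since $\sup$ is monotone under pointwise domination of the indexed family, it suffices to fix $B\in\bS_1$ and show $L_h^B(s,u)(x_i)\le L_h^B(s,v)(x_i)$. If $x_i\in\cN_h^B$ this is an equality because, by \cref{eq:L_pw_disc_bc}, $L_h^B(s,\phi)(x_i)=s-g(x_i)$ is independent of $\phi$. If $x_i\in\cN_h^I$, set $w:=u-v\ge 0$; using \cref{eq:L_pw_disc}, the forcing term $f(x_i)\sqrt[d]{\det B}$ and the $-2s$ contributions cancel, leaving
\[
L_h^B(s,u)(x_i)-L_h^B(s,v)(x_i)=-\sum_{j=1}^d\frac{\lambda_j}{k^2}\Bigl(w(x_i-k\bsig_j)+w(x_i+k\bsig_j)\Bigr),
\]
where $k=k(h,x_i)>0$ and $(\bsigma,\blambda)$ is the factorization of $B$ obtained by inverting \cref{factorisation}; recall that $x_i\pm k\bsig_j\in\overline{\Ome_h}\subset\overline{\Ome}$, so $w$ is evaluated inside its domain. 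The right-hand side is $\le 0$ because $w\ge 0$, $k^2>0$, and the diagonal entries $\lambda_j$ are nonnegative. Chaining the reductions yields the claim.

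The only genuinely structural ingredient is the nonnegativity of the $\lambda_j$: this is exactly why the factorization $B=\bsigma\blambda\bsigma^T$ is chosen with $\blambda$ positive semidefinite (e.g.\ $\bsigma$ orthogonal and $\blambda$ the diagonal matrix of the nonnegative eigenvalues of $B\in\bS_1$), and it is what turns the wide-stencil difference quotients in \cref{eq:L_pw_disc} into a monotone discretization of the linear operators \cref{eq:LB}. I expect this sign property, rather than the reductions, to be the point that must be made explicit; the remaining work is routine bookkeeping, namely checking that each intermediate operation — scaling by $\hmax>0$, constant normal extension, piecewise-linear interpolation, and the supremum over $B\in\bS_1$ — preserves the inequality, so that no monotonicity is lost when passing between the nodal and the global statements.
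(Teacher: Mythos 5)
Your proposal is correct and takes essentially the same route as the paper, whose proof is simply the remark that the claim follows directly from \cref{eq:L_pw_disc} and \cref{eq:L_pw_disc_bc}; you have just spelled out the reductions through the scaling by $\hmax$, the extension, the interpolation and the supremum. Your identification of the nonnegativity of the diagonal entries $\lambda_j$ as the one structural ingredient is accurate and consistent with the paper's (implicit) choice of factorizations of $B\in\bS_1$.
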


\begin{proof}
This follows directly from \cref{eq:L_pw_disc} and \cref{eq:L_pw_disc_bc}.
\end{proof}

For the proof of stability we construct a comparison function $\zeta$. This $\zeta$ will subsequently also be used to show that the envelopes of the numerical solutions obey the boundary conditions in the pointwise sense, cf.~\cref{classicalBC}. 

\begin{lemma}\label{stability}
The mapping $S$ is stable: there exists a $h$-independent constant $C>0$ such that
\begin{align} \label{stabconst}
\|u_h\|_{L^\infty(\oOme)} \leq C
\end{align}
for $u_h$ given by \cref{eq:disc}. Furthermore, let $p \in \mathbf R^d$ and choose
\begin{align} \label{comparisoncoef}
M \ge \| f \|_{L^\infty(\Ome)} \, \max_{B \in \bS_1} \sqrt[d]{\det B} = \blue{\frac{\| f \|_{L^\infty(\Ome)}}{d}}.
\end{align}
Let $\zeta(x;M,p) = \textstyle \frac{M}{2} | x - p |^2$ and $I_h$ be the nodal interpolant onto $V_h$. Then, for all $h > 0$, the function $u_h - I_h \zeta$ (resp.~$u_h + I_h \zeta$) attains its minimum (resp.~maximum) over $\oOme$ at a boundary node. 
\end{lemma}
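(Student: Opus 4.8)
The plan is to prove \cref{stability} in two parts. First I would establish the uniform bound \cref{stabconst} by exhibiting explicit discrete sub- and supersolutions of \cref{eq:disc} that are independent of $h$, and then invoke \cref{lin_nonneg} together with the comparison structure of the scheme. Specifically, let $g$ be bounded by $G := \|g\|_{L^\infty(\partial\Omega)}$ and $f$ by $F := \|f\|_{L^\infty(\Omega)}$. Pick the quadratic $w^+(x) = \frac{M}{2}|x-p|^2 + c$ with $M$ as in \cref{comparisoncoef} and $c$ chosen large enough that $w^+ \ge G$ on $\partial\Omega$; because $D^2 w^+ = M\,\mathrm{Id}$, one computes $H(D^2 w^+, f(x)) = \sup_{B\in\bS_1}(-M + f(x)\sqrt[d]{\det B}) = -M + \frac{f(x)}{d} \le 0$ by the choice \cref{comparisoncoef}, so the nodal interpolant $I_h w^+$ is (up to the $O(k^2)$ consistency error, which I address below) a discrete supersolution. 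Symmetrically $w^-(x) = -\frac{M}{2}|x-p|^2 - c$ satisfies $H(D^2 w^-, f(x)) = \sup_{B}(M + f(x)\sqrt[d]{\det B}) \ge M \ge 0$, so $I_h w^-$ is a discrete subsolution. Then I would apply \cref{lin_nonneg} to the difference $u_h - I_h w^+$: evaluating the scheme and using monotonicity, $\rL_h^{\bB}(u_h - I_h w^+) \le 0$ for the optimal control $\bB$, so $u_h - I_h w^+$ attains its maximum at a boundary node, where it is $\le 0$ since $w^+ \ge g$ there; hence $u_h \le I_h w^+ \le C$ on $\oOme$, and symmetrically $u_h \ge I_h w^- \ge -C$.

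The subtlety here is that \cref{eqn:SLvector} only reproduces $\bsigma\blambda\bsigma^T : D^2\phi$ up to an $O(k^2)$ term, so $I_h w^\pm$ are supersolutions only approximately. Since $w^\pm$ are quadratic and the finite-difference stencil \cref{eqn:SLscalar} is exact on quadratics (the $O(k^2)$ term vanishes identically for polynomials of degree $\le 2$), the only error is the interpolation error $w^\pm - I_h w^\pm$, which is $O(\hmax^2)$ and bounded uniformly; one absorbs this by enlarging $c$ by a fixed constant, or more cleanly by noting that $I_h w^\pm$ restricted to nodes equals $w^\pm$, and \cref{eq:L_pw_disc} only samples $\phi$ at points $x_i \pm k\bsigma_j$ where the piecewise linear interpolant of a convex/concave quadratic lies above/below the quadratic by a controlled amount. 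I would make this precise by the estimate $|I_h\zeta(y) - \zeta(y)| \le C M \hmax^2$ and choosing the additive constant accordingly; the uniform-in-$h$ nature of $M$ and this interpolation bound is exactly what gives the $h$-independent $C$.

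For the second assertion, the argument is essentially the same computation localized to the specific comparison function $\zeta(x;M,p) = \frac{M}{2}|x-p|^2$ without the additive constant. I would show that $u_h - I_h\zeta$ satisfies $\rL_h^{\bB}(u_h - I_h\zeta) \le 0$ at every interior node for the optimal control $\bB = \bB(x_i)$ realizing the supremum in $H_h(u_h(x_i),u_h)(x_i) = 0$. Indeed, at an interior node $x_i$, since $\zeta$ is quadratic the stencil evaluates $-\bsigma^{(i)}\blambda^{(i)}(\bsigma^{(i)})^T : D^2\zeta = -M\,\trace(\bsigma^{(i)}\blambda^{(i)}(\bsigma^{(i)})^T) = -M$ exactly on the nodal values, while on the piecewise linear interpolant $I_h\zeta$ the second difference of a convex function is $\ge$ its exact value, hence $\le -M$ once signs are accounted for; combined with $0 = H_h(u_h(x_i),u_h)(x_i) \ge \rL_h^{\bB}(u_h)(x_i) + f(x_i)\sqrt[d]{\det B_i}$ and $f(x_i)\sqrt[d]{\det B_i} \le \frac{f(x_i)}{d} \le M$, one gets $\rL_h^{\bB}(u_h)(x_i) \le -f(x_i)\sqrt[d]{\det B_i}$ and therefore $\rL_h^{\bB}(u_h - I_h\zeta)(x_i) \le M - f(x_i)\sqrt[d]{\det B_i} \le M \le \cdots$; arranging the inequality so that it reads $\rL_h^{\bB}(u_h - I_h\zeta) \le 0$, \cref{lin_nonneg} forces the maximum of $u_h - I_h\zeta$ to be at a boundary node. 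The statement for $u_h + I_h\zeta$ follows by applying the same reasoning to $-u_h$, or equivalently by replacing $\rL$ with its action on $-(u_h+I_h\zeta)$.

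The main obstacle I anticipate is handling the interplay between the piecewise-linear interpolation in \cref{eq:H_pw_disc} and the exactness of the stencil on quadratics: the stencil is applied to the interpolated function $I_h\zeta$, not to $\zeta$ itself, and the sampling points $x_i \pm k\bsigma_j^{(i)}$ are generically not nodes, so one must carefully track the sign of $I_h\zeta - \zeta$ (which is nonnegative for the convex $\zeta$, since linear interpolation of a convex function overestimates) and confirm it pushes the residual in the favorable direction, i.e.\ that convexity of $\zeta$ makes $u_h - I_h\zeta$ a "subsolution-type" quantity in the sense required by \cref{lin_nonneg}. Getting these signs right, and ensuring the argument is uniform over all controls $\bB\in\bS_1^{J_0}$ (which is immediate since the bound $\trace(\bsigma\blambda\bsigma^T)=1$ is uniform and $\sqrt[d]{\det B}\le 1/d$ uniformly by AM-GM on $\trace B = 1$), is where the care lies; everything else is a direct application of the discrete maximum principle \cref{lin_nonneg}.
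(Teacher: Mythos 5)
Your ingredients are the right ones (exactness of the stencil on quadratics, the favorable sign of $I_h\zeta-\zeta$ for convex $\zeta$, the bound $\sqrt[d]{\det B}\le 1/d$ on $\bS_1$, and \cref{lin_nonneg}), but the comparison directions are systematically flipped, and this breaks the argument rather than being a bookkeeping issue. For the Bellman convention $H(A,f)=\sup_B(-B:A+f\sqrt[d]{\det B})$, the convex quadratic $w^+$ with $H(D^2w^+,f)=-M+f/d\le 0$ is a \emph{sub}solution, so it can only bound $u_h$ from \emph{below} (after shifting down so that $w^+\le g$ on $\p\Ome$); it cannot serve as an upper barrier. Concretely, your key claim $\rL_h^{\bB}(u_h-I_hw^+)\le 0$ fails: since the second differences of $I_h\zeta$ at interior nodes are $\ge B:M\,\Id=M$ for every $B\in\bS_1$ (convexity of $\zeta$ plus exactness on quadratics), one has $\rL_h^{\bB}(I_h\zeta)(x_i)\le -M$, hence $\rL_h^{\bB'}(u_h-I_hw^+)(x_i)= -f(x_i)\sqrt[d]{\det B'_i}-\rL_h^{\bB'}(I_h\zeta)(x_i)\ge M-f(x_i)\sqrt[d]{\det B'_i}\ge 0$ for the optimal control $\bB'$ --- the opposite sign, for every control. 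The conclusion you want from it is also false in general: take $f\equiv 0$, $g\equiv 0$, $p\in\Ome$; then $u_h\equiv 0$ and $u_h-I_hw^+=-I_h\zeta-c$ attains its maximum at the interior node nearest $p$, not at a boundary node. The same inversion appears in your second part: the lemma asserts that the \emph{minimum} of $u_h-I_h\zeta$ is at a boundary node, equivalently that the maximum of $I_h\zeta-u_h$ is; you instead try to place the \emph{maximum} of $u_h-I_h\zeta$ at the boundary, and the chain ``$\rL_h^{\bB}(u_h-I_h\zeta)(x_i)\le M-f(x_i)\sqrt[d]{\det B_i}\le\cdots\le 0$'' does not close, for the reason above.

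The repair is exactly the reversal of roles, which is the paper's route. For the upper bound no barrier is needed: with the maximizing control, $\rL_h^{\bB'}(u_h)(x_i)=-f(x_i)\sqrt[d]{\det B'_i}\le 0$ at interior nodes, so for large $N$ one has $\rL_h^{\bB'}(u_h-N)\le 0$ and \cref{lin_nonneg} gives $u_h\le\max_{\p\Ome}g$. For the lower bound and the first boundary-extremum claim, apply \cref{lin_nonneg} to $I_h\zeta-u_h-N$: at interior nodes $\rL_h^{\bB'}(I_h\zeta-u_h)(x_i)\le -M+f(x_i)\sqrt[d]{\det B'_i}\le 0$ by \cref{comparisoncoef}, so $I_h\zeta-u_h$ attains its maximum (i.e.\ $u_h-I_h\zeta$ its minimum) at a boundary node, which yields the $h$-independent bound $-u_h\le 2\|\zeta\|_{L^\infty(\Ome)}+\|g\|_{L^\infty(\p\Ome)}$. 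The claim for $u_h+I_h\zeta$ is the one place where your computation works as stated, since both terms satisfy $\rL_h^{\bB'}(\cdot)\le 0$ there. Finally, note that no enlargement of the additive constant to absorb interpolation error is needed: the convexity of $\zeta$ gives $I_h\zeta\ge\zeta$ with the favorable sign, and $I_h\zeta(x_i)=\zeta(x_i)$ at nodes, so the discrete inequality $\rL_h^{\bB}(I_h\zeta)(x_i)\le -M$ is exact, not approximate.
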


\begin{proof}
Let 
\[
\bB' = {\rm argmax}_{\bB \in \bS_1^{J_0}} \,\bigl[ \rL_h^\bB(u_h) + \rF_h^\bB \bigr].
\]
Then $S(\blue{\hmax},x_i,u_h(x_i),u_h) / \blue{\hmax} = \rL_h^{\bB'}(u_h)(x_i) + \rF_h^{\bB'}(x_i)$ at $x_i \in \cN_h$.

To derive a bound on $u_h$ from below, let $\zeta(x) = \zeta(x; M, p)$ be as in the statement of the theorem. \blue{Observe that for any internal node $x_i$, also near the boundary $\partial \Omega_h$,}
\begin{align*} 
\sum_{j=1}^d \lambda_j^{(i)} \frac{\zeta(\blue{x_i}-k\bsig^{(i)}_j) -2\zeta(\blue{x_i}) + \zeta(\blue{x_i}+k\bsig^{(i)}_j)}{k^2} & = \sum_{j=1}^d \lambda_j \partial_{\sigma^{(i)},\sigma^{(i)}}^2 \zeta(x_i)\\
& \hspace{-2.5cm} \stackrel{\cref{eqn:SLvector}}{=} B'_i : D^2 \zeta(x_i) = M \, (B'_i : {\rm Id}) = M \, ( {\rm tr} \, B'_i) = M.
\end{align*}
Because of the convexity of $\zeta$ we know that
\[
I_h \zeta(\blue{x_i}-k\bsig^{(i)}_j) \ge \zeta(\blue{x_i}-k\bsig^{(i)}_j), \qquad I_h \zeta(\blue{x_i}+k\bsig^{(i)}_j) \ge \zeta(\blue{x_i}+k\bsig^{(i)}_j)
\]
and consequently, \blue{since $I_h \rho(x_i)=\rho(x_i)$ as $x_i$ is a node},
\[
\sum_{j=1}^d \lambda_j^{(i)} \frac{I_h \zeta(\blue{x_i}-k\bsig^{(i)}_j) -2 I_h \zeta(\blue{x_i}) + I_h \zeta(\blue{x_i}+k\bsig^{(i)}_j)}{k^2} \ge M.
\]
Hence, with $N \in \mathbf R$,
\[
\rL_h^{\bB'}(I_h \zeta - u_h - N)(x_i) \le \begin{cases} 
\textstyle f(x_i) \sqrt[d]{\det B'_i} - M & : \text{ if } x_i \in \cN_h^I,\\
\textstyle \frac{M}{2} | x_i - p |^2 - g(x_i) - N & : \text{ if } x_i \in \cN_h^B.
\end{cases}
\]
As, for $N$ large, $\rL_h^{\bB'}(I_h \zeta - u_h - N) \le 0$ on $\oOme$ it \blue{follows from \cref{lin_nonneg}} that $I_h \zeta - u_h - N$ and equally $I_h \zeta - u_h$ attain their maximum at a boundary node $x_i$. Thus, for $x \in \oOme$,
\begin{align*}
- u_h(x) & \le \| I_h \zeta \|_{L^\infty(\Ome)} + \| I_h \zeta - u_h \|_{L^\infty(\partial \Ome)} \le 2 \, \| \zeta \|_{L^\infty(\Ome)} + \| g \|_{L^\infty(\partial \Ome)}
\end{align*}
gives an $h$-independent bound on $u_h$ from below.

Now to the bound from above. As for large $N$ we have $\rL_h^{\bB'}(u_h - N) \le 0$ on $\oOme$ it follows from \cref{lin_nonneg} that $u_h$ attains its maximum at a boundary node $x_i$, where $u_h(x_i) = g(x_i)$. Thus $u_h$ is bounded from above by $g$. It is also clear that the maximizer of $u_h + I_h \zeta(x)$ is attained on $\partial \Ome_h$ and $\partial \Ome$, in fact for any $M \ge 0$.
\end{proof} 

Our consistency condition \cref{conscond} differs from \cite{Barles_Souganidis91} in that we require $x \in \Ome$ instead of $x \in \oOme$, however see also \cref{classicalBC}. Indeed we would not expect our scheme to be consistent as in the Barles--Souganidis framework, predicted by the results in \cite{WasowMotzkin53,Kocan95} due to the violation of \cref{eq:cons_scaling} in the vicinity of the boundary. One would assume that any numerical method cropping a wide stencil near $\p\Ome$ is incompatible with the original Barles--Souganidis framework, because the viscosity boundary conditions used there require the consistent monotone discretization of both the boundary \blue{operator} and the differential operator at all $x \in \p\Ome$ for all test functions $\phi$, see (7.9)--(7.10) of \cite{Crandall_Ishii_Lions92} and (2.4) of \cite{Barles_Souganidis91}.

\begin{lemma}\label{consistency}
The mapping $S$ of \cref{eq:S_HJB} is consistent in the sense that for all $x\in \Ome$ and $\phi \in C^4(\Omega)$ there hold
\begin{align} \label{conscond} \begin{array}{rcl}
\displaystyle \limsup_{\substack{\blue{\hmax} \to 0\\ y\to x\\ \xi\to 0}}
\frac{ S(\blue{\hmax}, y, \phi(y)+\xi, \phi+\xi) }{\blue{\hmax}} 
\leq H(D^2\phi(x), f(x)),\\[2mm]
\displaystyle \liminf_{\substack{\blue{\hmax} \to 0\\ y\to x\\ \xi\to 0}}
\frac{ S(\blue{\hmax}, y, \phi(y)+\xi, \phi+\xi) }{\blue{\hmax}} 
\geq H(D^2\phi(x), f(x)).
\end{array} \end{align}

\end{lemma}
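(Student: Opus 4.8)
The plan is to analyze the discrete Hamiltonian $H_h(\phi(y)+\xi,\phi+\xi)(y)$ at an arbitrary point $y \in \Omega_h$, decompose the error contributions at nodes, and pass through the interpolation and the supremum over $B \in \bS_1$. Since $x \in \Omega$ is fixed, for $\hmax$ small enough the point $x$ lies in some $\Omega_i$, so \cref{eq:cons_scaling} applies with a uniform ratio $h/k \to 0$, and by the red assumption $k(h,\cdot)$ is eventually constant on $\Omega_i$; this is what makes the wide-stencil finite-difference formula \cref{eqn:SLvector} usable near $x$.

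First I would fix $\phi \in C^4(\Omega)$ and, for $y$ close to $x$ and $\hmax$ small, write $H_h(\phi(y)+\xi,\phi+\xi)(y)$ via piecewise-linear interpolation of the nodal values $H_h(\phi(y)+\xi,\phi+\xi)(x_i)$ over the element containing $y$. At an internal node $x_i$ the nodal value equals
\[
\sup_{B \in \bS_1} \Bigl( - \sum_{j=1}^d \lambda_j \frac{(\phi+\xi)(x_i-k\bsig_j) - 2(\phi(y)+\xi) + (\phi+\xi)(x_i+k\bsig_j)}{k^2} + f(x_i)\sqrt[d]{\det B}\Bigr).
\]
The additive constant $\xi$ cancels in the second difference, leaving only a term $\frac{2(\phi(x_i)-\phi(y))}{k^2}\sum_j \lambda_j = \frac{2(\phi(x_i)-\phi(y))}{k^2}\trace\blambda$; using \cref{eq:trace} that $\trace\blambda = C$ is uniform, and $|\phi(x_i)-\phi(y)| = O(\hmax)$ together with the Wasow-Motzkin scaling $\hmax/k^2 \le (\hmax/k)\cdot(1/k) \to 0$ — here one must be slightly careful, but since $k$ is bounded below by a positive constant once it has stabilized on $\Omega_i$, and $|\phi(x_i)-\phi(y)| \to 0$, this term vanishes. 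The remaining second-difference expression is, by \cref{eqn:SLvector} applied to $\phi \in C^4$, equal to $B : D^2\phi(x_i) + O(k^2)$ uniformly in $B \in \bS_1$ (uniform because $\bS_1$ is compact, equivalently $\bF$ is compact), so the nodal value is $\sup_{B\in\bS_1}(-B:D^2\phi(x_i) + f(x_i)\sqrt[d]{\det B}) + O(k^2) = H(D^2\phi(x_i),f(x_i)) + O(k^2)$.

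Next I would handle the passage from nodal values back to the point $y$ and then to $x$. The interpolation step costs at most $\max_i |H(D^2\phi(x_i),f(x_i)) - H(D^2\phi(y),f(y))|$ over the at most $d+1$ vertices of the element containing $y$; since $D^2\phi$ and $f$ are continuous and each such vertex is within $\hmax$ of $y$, and $y \to x$, this difference tends to $0$. Combined with $O(k^2) \to 0$ from \cref{eq:krefinement} (or directly \cref{eq:cons_scaling}), we obtain $H_h(\phi(y)+\xi,\phi+\xi)(y) \to H(D^2\phi(x),f(x))$, which after multiplying by $\hmax$ and dividing by $\hmax$ in the definition of $S$ gives both the $\limsup$ and $\liminf$ bounds in \cref{conscond} — in fact equality of the limit, which is stronger than required. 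One subtlety I would flag: the continuity of $(A,f)\mapsto H(A,f)$, which is needed both for the uniformity of the Taylor remainder over $\bS_1$ and for the interpolation estimate; this is immediate since $H$ is a supremum of functions affine in $(A,f)$ over the compact set $\bS_1$, hence Lipschitz on bounded sets.

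The main obstacle is bookkeeping the interplay between the three independent limits $\hmax \to 0$, $y \to x$, $\xi \to 0$ and the $x$-dependence of $k$: one has to first restrict to $\hmax$ small enough that $y$ and the whole stencil around $y$ remain in a fixed $\Omega_i$ on which $k$ has become a constant, so that the clean formula \cref{eqn:SLvector} is valid with a stencil that does not get cropped. Once that reduction is in place the estimate is a routine Taylor expansion plus uniform continuity argument; the only genuinely delicate point is confirming that the spurious term $\frac{2(\phi(x_i)-\phi(y))}{k^2}C$ really is negligible, for which one uses that $k$ is eventually bounded away from $0$ on $\Omega_i$ while $\phi(x_i)-\phi(y)\to 0$.
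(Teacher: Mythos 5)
Your overall decomposition mirrors the paper's proof (restrict to a fixed $\Omega_i$, split the nodal value into a genuine central difference plus a correction proportional to $\phi(x_i)-\phi(y)$, pull the control-independent factor $\trace\blambda = C$ out of the supremum using \cref{eq:trace}, Taylor-expand, and then pass through the interpolation by continuity of $H(D^2\phi(\cdot),f(\cdot))$). However, there is a genuine gap in your treatment of the correction term $\frac{2C(\phi(x_i)-\phi(y))}{k^2}$. You argue it is negligible because ``$k$ is bounded below by a positive constant once it has stabilized on $\Omega_i$.'' That is not what the stabilization assumption says: it only asserts that $x\mapsto k(h,x)$ is constant \emph{in the spatial variable} on $\Omega_i$ for small $\hmax$; the value of that constant still depends on $h$ and must tend to $0$ as $\hmax\to 0$ by \cref{eq:krefinement} (indeed the prototypical choice in \cref{ex:k} is $k=\kappa(\hmax)=\sqrt{\hmax}$). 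Your argument is also internally inconsistent, since in the very same paragraph you invoke $O(k^2)\to 0$ for the Taylor remainder, which requires $k\to 0$. With the naive bound $|\phi(x_i)-\phi(y)|=O(\hmax)$ the correction term is only $O(\hmax/k^2)$, and $\hmax/k^2$ need not vanish under the Wasow--Motzkin scaling: for $k=\sqrt{\hmax}$ it is identically $1$. So as written this step fails.

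The missing idea, which is exactly where the paper uses the constant-trace assumption \cref{eq:trace} in an essential way, is that the correction terms must not be estimated node by node but \emph{after} interpolation: since $\trace\blambda^{(i)}=C$ can be taken outside $I_h$, the interpolated correction is
\[
C\, I_h\Bigl( x_i \mapsto \frac{\phi(y)-\phi(x_i)}{k^2} \Bigr)(y)
= C\,\frac{\phi(y) - (I_h\phi)(y)}{k(h,y)^2},
\]
using that $k$ is (locally) constant and $I_h$ is linear with $I_h$ reproducing the value $\phi(y)$-term through the partition of unity of the hat functions. The second-order interpolation estimate $\|\phi - I_h\phi\|_{L^\infty}\le C\hmax^2\|\phi\|_{W^{2,\infty}}$ then gives a bound $O\bigl((\hmax/k)^2\bigr)$, which tends to $0$ precisely by \cref{eq:cons_scaling}. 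Replacing your boundedness-below claim by this cancellation-through-interpolation argument closes the gap; the rest of your proof is sound.
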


\begin{proof}
\blue{There is an $i$ such that $x \in \Ome_i$. Also $x \in \Ome_h$ for $\blue{\hmax}$ sufficiently small. Recall that $\Ome_i$ and $\Ome_h$ are open. Hence we may restrict our attention to $y \in \Ome_i \cap \Ome_h$. Let $\blue{\hmax}$ be small enough such that $k(h,x) + \blue{\hmax} < {\rm dist}(y,\p \Ome_h \cup \p \Ome_i)$. The numerical operator fully expanded is
\begin{align*} 
& \, \frac{1}{\hmax} \, S(\hmax,y,\phi(y) + \xi, \phi + \xi)\\
= & \, I_h \Bigl( x_i \mapsto \sup_{B \in \bS_1} - \sum_{j = 1}^d \lambda_j \frac{\phi(x_i + k \, \sigma_j) - 2 \phi(y) + \phi(x_i - k \, \sigma_j)}{k^2} + f(x_i) \sqrt[d]{\det B} \Bigr)(y), \nonumber
\end{align*}
where the interpolation operator $I_h$ acts on a mapping which assigns to internal nodes $x_i$ real values arising from the supremum over the finite-difference-like terms and the determinant term. The finite-difference-like terms depend on the $y$, however, and therefore are not a central differences at this point. Finally, the interpolation operator returns an interpolating finite element function, which is evaluated at the very same $y$. The $\xi$, appearing in the Barles--Souganidis formulation \cref{conscond} of consistency, cancels itself out immediately.

To prove consistency we first restore the central differences. We denote the $B$ maximising above at node $x_i$ by $B_i = \bsigma^{(i)} \blambda^{(i)} (\bsigma^{(i)})^T$. Then
\begin{align*} 
& \, \frac{1}{\hmax} \, S(\hmax,y,\phi(y) + \xi, \phi + \xi)\\
= & \, I_h \Bigl( x_i \mapsto - \sum_{j = 1}^d \lambda^{(i)}_j \frac{\phi(x_i + k \, \sigma^{(i)}_j) - 2 \phi(y) + \phi(x_i - k \, \sigma^{(i)}_j)}{k^2} + f(x_i) \sqrt[d]{\det B} \Bigr)(y)\\
= & \, I_h \Bigl( x_i \mapsto - \sum_{j = 1}^d \lambda^{(i)}_j \frac{\phi(x_i + k \, \sigma^{(i)}_j) - 2 \phi(x_i) + \phi(x_i - k \, \sigma^{(i)}_j)}{k^2} + f(x_i) \sqrt[d]{\det B} \Bigr)(y)\\
& + I_h \Bigl( x_i \mapsto \frac{2 \, \trace (\blambda^{(i)}) \, \bigl( \phi(y) - \phi(x_i) \bigr)}{k^2} \Bigr)(y)\\
\le & \, I_h \Bigl( x_i \mapsto \sup_{B \in \bS_1} - \sum_{j = 1}^d \lambda_j \frac{\phi(x_i + k \, \sigma_j) - 2 \phi(x_i) + \phi(x_i - k \, \sigma_j)}{k^2} + f(x_i) \sqrt[d]{\det B} \Bigr)(y)\\
& + I_h \Bigl( x_i \mapsto \frac{2 \, \trace (\blambda^{(i)}) \, \bigl( \phi(y) - \phi(x_i) \bigr)}{k^2} \Bigr)(y).
\end{align*}
Denoting the maximising $B$ in the last display at node $x_i$ by $\bar{B}_i = \bar{\bsigma}^{(i)} \bar{\blambda}^{(i)}(\bar{\bsigma}^{(i)})^T$, we obtain similarly
\begin{align*} 
& \, \frac{1}{\hmax} \, S(\hmax,y,\phi(y) + \xi, \phi + \xi)\\
\ge & \, I_h \Bigl( x_i \mapsto - \sum_{j = 1}^d \bar{\lambda}^{(i)}_j \frac{\phi(x_i + k \, \bar{\sigma}^{(i)}_j) - 2 \phi(y) + \phi(x_i - k \, \bar{\sigma}^{(i)}_j)}{k^2} + f(x_i) \sqrt[d]{\det B} \Bigr)(y)\\
= & \, I_h \Bigl( x_i \mapsto \sup_{B \in \bS_1} - \sum_{j = 1}^d \lambda_j \frac{\phi(x_i + k \, \sigma_j) - 2 \phi(x_i) + \phi(x_i - k \, \sigma_j)}{k^2} + f(x_i) \sqrt[d]{\det B} \Bigr)(y)\\
& + I_h \Bigl( x_i \mapsto \frac{2 \, \trace (\bar\blambda^{(i)}) \, \bigl( \phi(y) - \phi(x_i) \bigr)}{k^2} \Bigr)(y).
\end{align*}
\red{Because of \cref{eq:trace} we conclude that the last two inequalities are in fact equalities and that the traces of $\blambda$ and $\bar\blambda$ may be taken out of $I_h$.} For the test functions $\phi$ 
\begin{align*} 
\sup_{z \in \Ome_i} \; \sup_{B \in \bS_1} \; \sup_{1 \le j \le d} \; \left| \frac{\phi(z + k \, \sigma_j) - 2 \phi(z) + \phi(z - k \, \sigma_j) }{k^2} - \partial^2_{\sigma_j,\sigma_j} \phi(z) \right|
\end{align*}
is of the order $O(\sup_{z \in \Ome_i} k^2(h,z))$. Thence,
\begin{align*}
& \, I_h \Bigl( x_i \mapsto \sup_{B \in \bS_1} - \sum_{j = 1}^d \lambda_j \frac{\phi(x_i + k \, \sigma_j) - 2 \phi(x_i) + \phi(x_i - k \, \sigma_j)}{k^2} + f(x_i) \sqrt[d]{\det B} \Bigr)(y)\\
= & \, I_h \Bigl( x_i \mapsto \sup_{B \in \bS_1} - \sum_{j = 1}^d \lambda_j \partial^2_{\sigma_j,\sigma_j} \phi(x_i) + f(x_i) \sqrt[d]{\det B} \Bigr)(y) + O(\sup_{z \in \Ome_i} k^2(h,z))\\
\to & \, H(D^2 \phi(x), f(x))
\end{align*}
as $\hmax \to 0$ and $y \to x$, since $O(\sup_{z \in \Ome_i} k^2(h,z)) \to 0$ as $\blue{\hmax} \to 0$. 

Finally we show that $I_h \bigl( x_i \mapsto ( \phi(y) - \phi(x_i) ) / k^2 \bigr)(y) \to 0$ as $\hmax \to 0$. 
Recall that
\[
\| \phi - I_h \phi \|_{L^\infty(\Ome_h)} \le C \, \blue{\hmax}^2 \| \phi \|_{W^{2,\infty}(\Ome_h)},
\]
see \cite[Corollary 1.109]{ErnGuermond}.  \red{There is a neighborhood $N$ of $y$ so that eventually} all elements containing $y$ belong to $N$ and \red{$k$ is constant on $N$.} Thus, for $\hmax$ small enough,
\begin{align*} 
I_h \Bigl( x_i \mapsto \frac{\phi(y) - \phi(x_i)}{k^2} \Bigr)(y) = \frac{\phi(y) - (I_h \phi) (y)}{k(h,y)^2} \to 0,
\end{align*}
as $\hmax \to 0$ due to \cref{eq:cons_scaling}.}
\end{proof}

\begin{figure}
\begin{center}
\begin{tikzpicture}[
  line width=0.2mm,
  >=stealth',
  dot/.style = {
   draw,
   fill = white,
   circle,
   inner sep = 0pt,
   minimum size = 4pt
  }
 ]
 \coordinate (O) at (0,0);
 \draw[->] (0,0) -- (11,0) coordinate[label = {below:$\nabla P$}] (xmax);
 \draw[-] (0,-1.5) -- (0,3) coordinate[label = {right:${\rm ker} P$}] (ymax);
 \draw plot[smooth] coordinates {(0.7,-1.5) (0,0) (1,1.5) (4,3)};
 \draw [dashed] (2,0) arc [radius=8, start angle=180, end angle= 190];
 \draw [dashed] (2,0) arc [radius=8, start angle=180, end angle= 160];
 \draw[-,dotted] (2.3,2.2) -- (2.3,0);
 \node [dot] at (2,0) {};
 \node [below] at (1.8,0) {$y_\ell$};
 \node [dot] at (2.3,0) {};
 \node [below] at (2.5,0) {$y_\infty$};
 \node [dot] at (1.2,0) {};
 \draw[<->] (0,-0.2) -- (1.2,-0.2);
 \node [below] at (0.6,-0.2) {$2 K$};
 \node [left] at (2.3,2.3) {$y$};
 \node [dot] at (2.3,2.2) {};
 \node [left] at (0,0) {$x$};
 \node [dot] at (0,0) {};
 \node [below] at (10,0) {$p_\ell$};
 \node [dot] at (10,0) {};
 \node [right] at (2.05,-1.2) {$\partial B_\ell$};
 \node [below] at (3.2,2.6) {$\partial \Omega$};
\end{tikzpicture}
\caption{The constant $K$ can be found for sufficiently large $\ell$ because $y_\ell$ converges to $y_\infty$ as $\ell \to \infty$, where $B_\ell$ denotes the ball with center $p_\ell$ and radius $|y - p_\ell|$.}
\label{findK}
\end{center}
\end{figure}
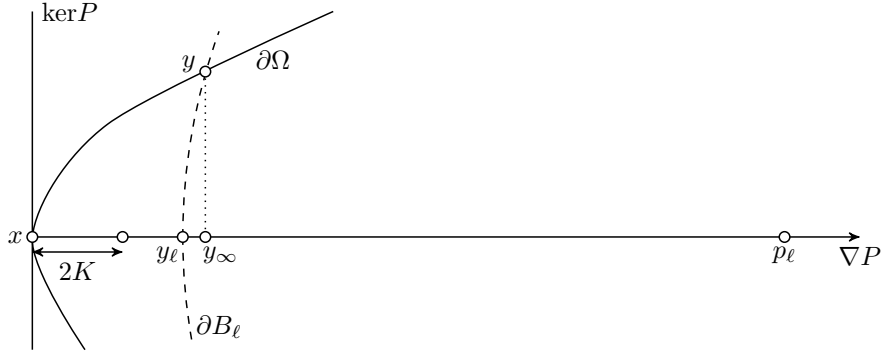

We define, with $x,y \in \oOme$ and $h > 0$,
\begin{align} \label{envelopes}
\overline{u}(x):=\limsup_{\substack{y\to x\\ \blue{\hmax} \to 0}} u_h(y) \qquad\mbox{and}\qquad
\underline{u}(x):=\liminf_{\substack{y\to x\\ \blue{\hmax} \to 0}} u_h(y). 
\end{align}
The following lemma confirms that $\overline{u} \in \USC(\oOme)$ and $\underline{u} \in \LSC(\oOme)$ are consistent with the pointwise Dirichlet conditions at the boundary.

\begin{lemma} \label{classicalBC}
\blue{Let $\Ome$ be a strictly convex domain, then we} have $\overline{u}(x) = \underline{u}(x) = g(x)$ 
for all $x \in \partial \Ome$.
\end{lemma}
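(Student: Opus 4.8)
Fix $x_0 \in \p\Ome$. It suffices to establish the two one–sided bounds $\overline u(x_0) \le g(x_0)$ and $\underline u(x_0) \ge g(x_0)$: since $\underline u \le \overline u$ pointwise by \cref{envelopes}, these force $\underline u(x_0) = \overline u(x_0) = g(x_0)$. Both bounds will come from the comparison function $\zeta(\cdot;M,p) = \tfrac M2 |\cdot - p|^2$ of \cref{stability}, with $M := \max\{\|f\|_{L^\infty(\Ome)}/d,\,1\}$ (admissible in \cref{comparisoncoef}) and with the centre $p$ placed far from $\Ome$ in the outward supporting direction at $x_0$. Because the normal extension of $u_h$ across $\p\Ome_h$ takes, near $x_0$, values within the range of $u_h$ on $\p\Ome_h$, and because on $\p\Ome_h$ near $x_0$ the function $u_h$ is a convex combination of the data $g$ at boundary nodes converging to $x_0$, it is enough to bound $u_h(y)$ for interior nodes $y \in \Ome_h$ when computing the envelopes.

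\textbf{The geometric step.} Let $\epsilon>0$. By uniform continuity of $g$, pick $\delta_1 \in (0,\sqrt{\epsilon/M})$ with $|g(z)-g(x_0)| < \epsilon/2$ for all $z\in\p\Ome$, $|z-x_0|\le\delta_1$. Let $\nu$ be a unit supporting direction at $x_0$, so that $\nu\cdot(z-x_0)\le 0$ for every $z\in\oOme$. \emph{This is the only point in the whole convergence analysis where the convexity, indeed the strict convexity, of $\Ome$ enters}: strict convexity forces the supporting hyperplane $\{\nu\cdot(\cdot-x_0)=0\}$ to meet $\oOme$ in the single point $x_0$, so by compactness of $\oOme$
\[
c := -\max\{\,\nu\cdot(z-x_0) : z\in\oOme,\ |z-x_0|\ge\delta_1\,\} > 0,
\]
cf.~\cref{findK}. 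Set $p := x_0 - \ell\,\nu$ with $\ell$ so large that $\tfrac M2 \diam(\Ome)^2 - M\ell c \le -2\|g\|_{L^\infty(\p\Ome)}$, and write $\zeta := \zeta(\cdot;M,p)$; the figure depicts the limiting half–space geometry as $\ell\to\infty$, though a single finite $\ell$ is all we need. The elementary identity $\zeta(z)-\zeta(x_0) = \tfrac M2|z-x_0|^2 + M\ell\,\nu\cdot(z-x_0)$ for $z\in\oOme$ then gives $\zeta(z)-\zeta(x_0)\le \tfrac M2\delta_1^2 < \epsilon/2$ when $|z-x_0|\le\delta_1$, and $\zeta(z)-\zeta(x_0)\le -2\|g\|_{L^\infty(\p\Ome)}$ when $|z-x_0|\ge\delta_1$.

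\textbf{The barrier estimates.} By \cref{stability}, $u_h+I_h\zeta$ attains its maximum over $\oOme$ at some node $x_i\in\cN_h^B$, where its value is $g(x_i)+\zeta(x_i)$. Since $\cN_h^B\subset\p\Ome$, distinguishing the cases $|x_i-x_0|\le\delta_1$ and $|x_i-x_0|\ge\delta_1$ and combining the two estimates above with $g(x_i)-g(x_0)\le 2\|g\|_{L^\infty(\p\Ome)}$ yields $\max_{\oOme}(u_h+I_h\zeta)\le g(x_0)+\zeta(x_0)+\epsilon$, with a right–hand side independent of $h$. As $\zeta$ is convex we have $I_h\zeta\ge\zeta$ on $\oOme_h$, hence $u_h(y)\le g(x_0)+\zeta(x_0)+\epsilon-\zeta(y)$ for all $h$ and $y\in\oOme_h$; letting $y\to x_0$, $\hmax\to0$ and using continuity of $\zeta$ gives $\overline u(x_0)\le g(x_0)+\epsilon$. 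Running the symmetric argument with $u_h-I_h\zeta$, which by \cref{stability} attains its minimum over $\oOme$ at a boundary node, and again using $I_h\zeta\ge\zeta$, gives $\underline u(x_0)\ge g(x_0)-\epsilon$. Since $\epsilon>0$ was arbitrary, $\overline u(x_0)\le g(x_0)\le\underline u(x_0)$, which completes the proof.

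\textbf{Main obstacle.} The crux is producing the strictly positive gap $c$: for a domain that is convex but not strictly convex the supporting hyperplane may touch $\p\Ome$ along an entire face, in which case $c=0$ and the barrier $\zeta$ fails to dominate the boundary data at the nodes lying on that face. The rest — the identity for $\zeta(z)-\zeta(x_0)$, the interpolation inequality $I_h\zeta\ge\zeta$, the reduction to interior nodes, and the $\epsilon$–bookkeeping — is routine.
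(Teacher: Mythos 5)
Your proposal is correct and uses essentially the same argument as the paper: the quadratic barrier $\zeta(\cdot;M,p)$ centered far along the inward normal at $x_0$, the discrete maximum principle of \cref{stability} placing the extrema of $u_h\pm I_h\zeta$ at boundary nodes, the convexity inequality $I_h\zeta\ge\zeta$, and strict convexity of $\Ome$ to localize the boundary extremum near $x_0$. The only difference is organizational — you fix one sufficiently large $\ell$ per $\epsilon$ and run a two-case estimate at the extremal boundary node, whereas the paper tracks the minimizers $q_\ell$ of $g-\zeta_\ell$ and lets $\ell\to\infty$ — which does not change the substance of the proof.
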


\begin{proof}
We show that $\underline{u}$ satisfies the pointwise boundary conditions on $\partial \Ome$. The proof for $\overline{u}$ is analogous. Fix $x \in \partial \Omega$. As $\Ome$ is convex there exists an affine mapping $P : \mathbf R^d \to \mathbf R$ such that
\[
( \oOme \setminus \{x\} ) \subset \{ y \in \mathbf R^d : P y > 0\} \qquad \text{and} \qquad P x = 0.
\]
For $\ell \in \mathbf N$ let $p_\ell = x + \ell \nabla P$, noting that $\nabla P$ is an inner normal vector of $\Ome$. Again we use $\zeta_\ell(x) = \zeta(x; M, p_\ell)$ with $M$ satisfying \cref{comparisoncoef}. We denote by $q_\ell$ the minimizer of $g - \zeta_\ell$ over $\p\Ome$.

Due to compactness of $\p\Ome$ the sequence $(q_\ell)_\ell$ has a converging subsequence with a limit $y \in \p\Ome$. If $y \neq x$ it follows from the strict convexity that there is a constant $K > 0$ such that $|y - p_\ell| + 2 K \le |x - p_\ell|$ for all large $\ell$, see \cref{findK}. But then, for $|q_\ell - y| < K$,
\begin{align} \label{quad_inequality}
\zeta_\ell(x) \blue{= \frac{M}{2} | x- p_\ell|^2} > \frac{M}{2} \left( | q_\ell - p_\ell| + K \right)^2 = \zeta_\ell(q_\ell) + M K | q_\ell - p_\ell| + \frac{M K^2}{2}.
\end{align}
Because $C < M K | q_\ell - p_\ell|$ for large $\ell$ with $C$ as in \cref{stabconst}, \cref{quad_inequality} contradicts that $q_\ell$ is a minimizer. Hence 
\begin{align} \label{limit_to_boundary}
\lim_{\ell \to \infty} q_\ell = x.
\end{align}
Consider a sequence $(y_i, \blue{\hmax_i})_{i \in \mathbf N}$ with $\lim_{i \to \infty} (y_i,\blue{\hmax_i}) = (x,0)$. Then, for all $\ell \in \mathbf N$,
\begin{align*}
\liminf_{i \to \infty} u_{h_i}(y_i) & = \lim_{i \to \infty} I_{h_i} \zeta_\ell(y_i) 
+ \liminf_{i \to \infty} \bigl[ u_{h_i}(y_i) - I_{h_i} \zeta_\ell(y_i) \bigr]\\
& \ge \zeta_\ell(x) + \liminf_{i \to \infty} \inf_{y \in \p\Ome} \bigl[ u_{h_i}(y) - I_{h_i} \zeta_\ell(y) \bigr]\\
& \ge \zeta_\ell(x) + \liminf_{i \to \infty} \inf_{y \in \p\Ome} \bigl[ g(y) - \zeta_\ell(y) \bigr]\\
& = \zeta_\ell(x) + g(q_\ell) - \zeta_\ell(q_\ell),
\end{align*}
where we used that $u_{h_i} - I_{h_i} \zeta_\ell$ attains its minimum at a node on the boundary, cf.~\cref{stability}. Together with \cref{limit_to_boundary} we have
\[
\liminf_{i \to \infty} u_{h_i}(y_i) \ge g(x).
\]
As this inequality holds for all sequences $(y_i, \blue{\hmax_i})_{i \in \mathbf N}$ converging to $(x,0)$, we have $\underline{u} \ge g$ on $\p \Ome$. The opposite inequality follows by choosing sequences with $y_i = x$.
\end{proof}

We are ready to state the main result of this paper. 

\begin{theorem}\label{thm:main}
Let $\Ome$ be a strictly convex domain. Assume that $f \in C(\Ome)$ with $f\ge 0$ and $g\in C(\p\Ome)$. Then, as $\blue{\hmax} \to 0$, the solutions $u_h$ of \cref{eq:disc} converge uniformly to a function \blue{$u \in C(\oOme)$}, which is the unique viscosity solution on the set of convex functions of the Monge--Amp\`ere problem \cref{EMAeq}. This $u$ is also the unique viscosity solution of the Bellman problem \cref{EHJBeq} and it satisfies the Dirichlet boundary conditions pointwise.
\end{theorem}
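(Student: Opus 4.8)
The plan is to run the Barles--Souganidis machinery, but with the boundary conditions handled separately via the pointwise-consistency of the envelopes rather than through viscosity boundary conditions. The ingredients are already in place: monotonicity (\cref{monotonicity}), stability (\cref{stability}), interior consistency (\cref{consistency}), the boundary identification of the envelopes (\cref{classicalBC}), the comparison principle for the Bellman problem (\cref{thm:comparison_principle}), and the equivalence of the Bellman and convex Monge--Amp\`ere solution sets (\cref{thm:MA_to_HJB,thm:HJB_to_MA}). The first step is to define $\overline u$ and $\underline u$ as in \cref{envelopes}; by \cref{stability} these are finite, so $\overline u \in \USC(\oOme)$ and $\underline u \in \LSC(\oOme)$, and by construction $\underline u \le \overline u$ on $\oOme$.

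Next I would show that $\overline u$ is a viscosity subsolution and $\underline u$ a viscosity supersolution of the Bellman equation \cref{EHJBeq} in the \emph{interior}~$\Omega$. This is the standard half-relaxed-limits argument: if $\varphi \in C^2(\Omega)$ (which one may upgrade to $C^4$ near the point by mollification, or argue directly using second-order jets together with the $C^4$ form of \cref{consistency}) and $\overline u - \varphi$ has a strict local maximum at $x \in \Omega$, then there is a sequence of near-maximizers $y_i$ of $u_{h_i} - \varphi$ with $y_i \to x$, $\hmax_i \to 0$, $u_{h_i}(y_i) \to \overline u(x)$. Writing $\xi_i := u_{h_i}(y_i) - \varphi(y_i) \to 0$ and using that $u_{h_i}$ solves the scheme together with monotonicity in the last argument to replace $u_{h_i}$ by $\varphi + \xi_i$ (compare with \cref{eq:L_pw_disc} and \cref{monotonicity}), one obtains $S(\hmax_i, y_i, \varphi(y_i) + \xi_i, \varphi + \xi_i) \le 0$; dividing by $\hmax_i$ and passing to the limsup, \cref{consistency} gives $H(D^2\varphi(x), f(x)) \le 0$. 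The supersolution property of $\underline u$ is symmetric. Since \cref{classicalBC} gives $\overline u = \underline u = g$ pointwise on $\partial\Omega$, the pair $(\overline u, \underline u)$ satisfies the hypotheses of the comparison principle \cref{thm:comparison_principle} (note $\overline u = g = \underline u \le \underline u$ on $\partial\Omega$, so $\overline u \le \underline u$ on the boundary), whence $\overline u \le \underline u$ on $\oOme$. Combined with $\underline u \le \overline u$ this forces $u := \overline u = \underline u$ on $\oOme$, a single continuous function; a standard argument then promotes the pointwise convergence $u_{h} \to u$ to uniform convergence on the compact set $\oOme$ (if it failed, extract a subsequence with $|u_{h_i}(y_i) - u(y_i)| \ge \delta$ and $y_i \to z$, contradicting $\overline u(z) = \underline u(z) = u(z)$ together with continuity of $u$).

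Finally, $u$ is simultaneously a viscosity subsolution and supersolution of \cref{EHJBeq} on $\Omega$, hence a viscosity solution of the Bellman problem, and $u|_{\partial\Omega} = g$ pointwise. By \cref{thm:HJB_to_MA} it is a viscosity solution on the set of convex functions of the Monge--Amp\`ere problem \cref{EMAeq}; uniqueness in that class follows because any two such solutions are, by \cref{thm:MA_to_HJB}, viscosity solutions of the Bellman problem agreeing with $g$ on $\partial\Omega$, and \cref{thm:comparison_principle} applied in both directions gives equality. I expect the main obstacle to be the interior consistency/jet bookkeeping near $\partial\Omega$: because the stencil is cropped there, \cref{consistency} only delivers the consistency inequalities for $x \in \Omega$ (not $x \in \oOme$), so the argument must keep the test-function point strictly interior and rely on \cref{classicalBC} — rather than on a consistent discretization of the boundary operator, as in the original Barles--Souganidis framework — to close the comparison at $\partial\Omega$. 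Care is also needed in the passage from $C^4$ (required by \cref{consistency}) to general $C^2$ test functions, which is handled by a local mollification that perturbs $D^2\varphi(x)$ by an arbitrarily small amount together with continuity of $H$ in its first argument.
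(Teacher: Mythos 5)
Your proposal is correct and follows essentially the same route as the paper's proof: half-relaxed limits with \cref{monotonicity} and \cref{consistency} to show $\overline{u}$ and $\underline{u}$ are interior sub- and supersolutions of \cref{EHJBeq} (with the standard $C^4$-to-$C^2$ test-function upgrade), then \cref{classicalBC} together with \cref{thm:comparison_principle} to collapse the envelopes and obtain uniform convergence, and finally \cref{thm:MA_to_HJB,thm:HJB_to_MA} to transfer the result to the Monge--Amp\`ere problem. Your explicit uniqueness argument via comparison applied in both directions is just a spelled-out version of what the paper leaves implicit.
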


\begin{proof}
We have consistency for $\phi \in C^4(\Omega)$. It follows directly from the proof of Theorem 2.1 in \cite{Barles_Souganidis91} that for all $x \in \Ome$
\[
H(D^2\phi(x), f(x)) \le 0\qquad \text{(}H(D^2\phi(x), f(x)) \ge 0 \text{)}
\]
whenever $\underline{u} - \phi$ (respectively $\overline{u} - \phi$) attains a local maximum (minimum) at $x$. The result carries over \cite[p.57]{Crandall_Ishii_Lions92} to test functions $\phi \in C^2(\Ome)$ so that~$\underline{u}$ and $\overline{u}$ are super- and subsolutions of \cref{EHJBeq}. 

Now \cref{thm:comparison_principle} and \cref{classicalBC} yield $\overline{u} \le \underline{u}$ on $\oOme$. The opposite inequality is clear from the definition of $\overline{u}$ and $\underline{u}$. This, together with \cref{envelopes}, implies the uniform convergence to the unique viscosity solution of the Bellman problem on $\oOme$. Now the result follows from \cref{thm:MA_to_HJB} and \cref{thm:HJB_to_MA}.
\end{proof}

\section{Parameter selection} \label{sec-ps}

It remains to show that a suitable compact set $\bF$ can be found so that \cref{factorisation} is bijective. It turns out that there are several viable candidates.

A natural starting point is the eigen-decomposition $B = Q \Lambda Q^T$ of real symmetric matrices, where where $Q$ is an orthonormal matrix and $\Lambda$ is a diagonal matrix whose entries are the eigenvalues of $B$. Similarly one can use the Cholesky decomposition or the closely related LDL decomposition $B = LDL^T$, where $L$ is a lower unit triangular matrix and $D$ a diagonal matrix. A widely used choice for the discretization of Bellman equations is $B = \bsigma \blambda \bsigma^T = \bsigma \bsigma^T$, that is $\blambda = \Id$, see \cite{Kushner90} and Section 5 of \cite{Debrabant_Jakobsen12}. 

From the implementational point of view it is desirable to keep the set of $\bsigma$ small: While evaluation of $\phi(x_i-k\bsig^{(i)}_j)$ and $\phi(x_i+k\bsig^{(i)}_j)$ in \cref{eq:VL_pw_disc} can be implemented efficiently \cite[Remark 4]{Achdou_Falcone12}, unnecessary evaluations should be avoided, especially if $k \gg h$. In contrast no significant savings arise in \cref{eq:VL_pw_disc} from a small set of $\blambda$. In the numerical experiments in the next section we use therefore the eigen-decomposition of $B$ as in this case the $\sigma_j$ \blue{can be} normalized and multiple $B$ share the same $\bsigma = Q$.

\section{Numerical experiments} \label{sec:experiments}

\begin{figure}
\centerline{
\includegraphics[width=6.0cm]{./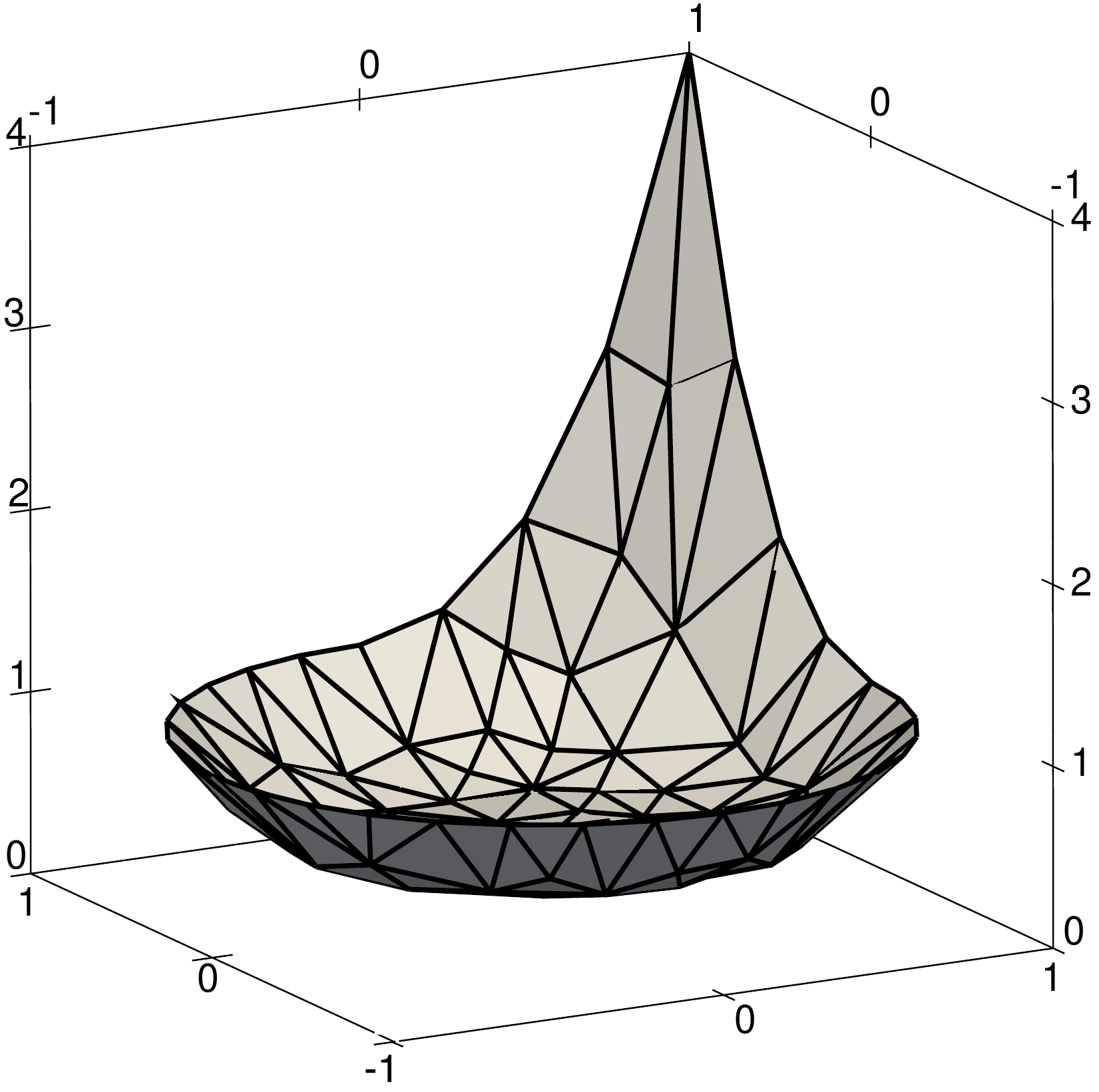}
\includegraphics[width=6.0cm]{./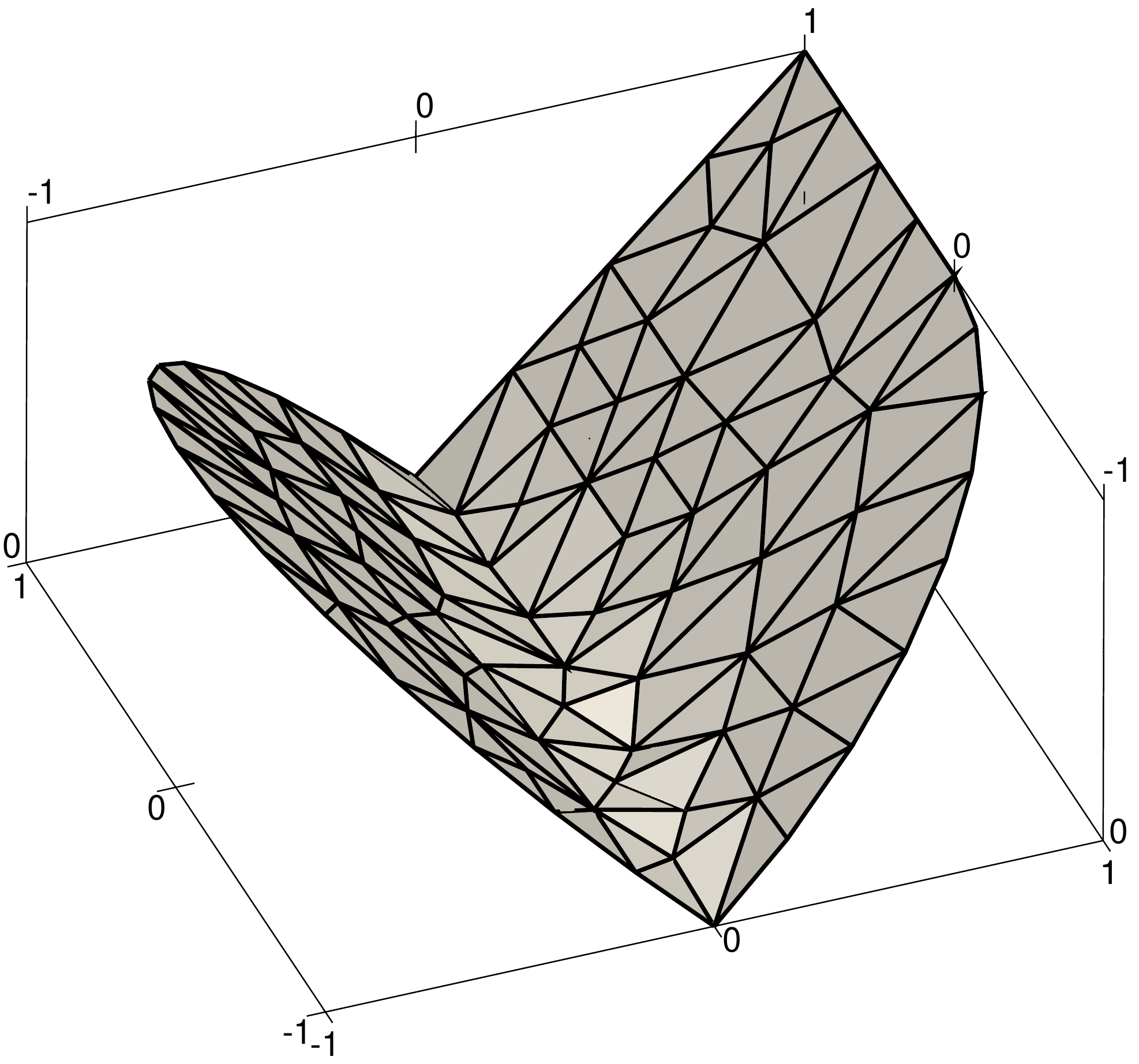}
}
\caption{Solution of the quartic and non-smooth problem on the coarsest mesh.}
\label{fig:numsol}
\end{figure}

In this section we present two 2-d numerical experiments to test the proposed wide-stencil method and Howard's \blue{Newton} solver. The first experiment has the exact smooth solution $u(x)= |x|^4=(x_1^2 + x_2^2)^2$ and the second experiment computes the non-smooth viscosity solution $u(x)=|x_1|$. In both experiments the computational domain is the union of the unit circle and the unit square so that the strict convexity condition is violated in part of the domain:
\[
\Ome = \{ (x,y) \in \mathbf{R}^2 : x^2 + y^2 < 1 \} \cup \{ (x,y) \in \mathbf{R}^2 : 
0 < x,y < 1 \}.
\]
The \blue{quasi-uniform} grid has at the coarsest level $91$ nodes and at the finest level after $7$ uniform refinements $1,214,465$ nodes. The computations were carried out in Python with FEniCS on an Apple iMac computer. The numerical solutions on the coarsest grid are shown in \cref{fig:numsol}.

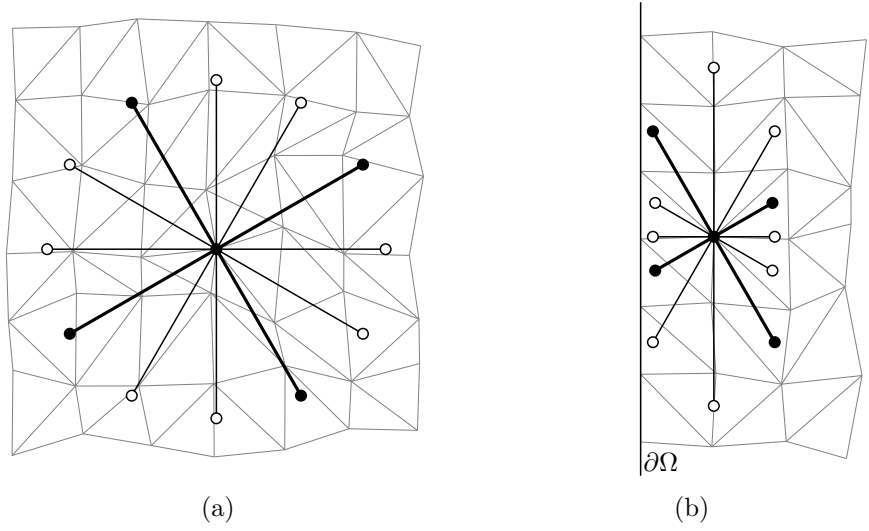
\begin{figure}
\begin{center}
\begin{tikzpicture}[
    line width=0.2mm,
    >=stealth',
    x=0.9cm,y=0.9cm,
    dot/.style = {
      draw,
      fill = white,
      circle,
      inner sep = 0pt,
      minimum size = 4pt
    },
    blackdot/.style = {
      draw,
      fill = black,
      circle,
      inner sep = 0pt,
      minimum size = 4pt
    }
  ]
\foreach \i [evaluate={\ii=int(\i-1);}] in {0,...,6}{
  \foreach \j [evaluate={\jj=int(\j-1);}] in {0,...,6}{
    \coordinate [shift={(\j,\i)}] (n-\i-\j) at (rand*90:1/6+rnd/9);
\ifnum\i>0
  \draw [help lines] (n-\i-\j) -- (n-\ii-\j);
\fi
\ifnum\j>0
  \draw [help lines] (n-\i-\j) -- (n-\i-\jj);
  \ifnum\i>0
    \pgfmathparse{int(rnd>.5)}
    \ifnum\pgfmathresult=0
      \draw [help lines] (n-\i-\j) -- (n-\ii-\jj);
    \else%
      \draw [help lines] (n-\ii-\j) -- (n-\i-\jj);
    \fi%
  \fi
\fi
}}
\begin{scope}[shift=(n-3-3)]
\foreach \x in {0,60,90,150,180,240,270,330} {\draw (0,0) to (\x:2.5); \node [dot] at (\x:2.5) {};};
\foreach \x in {30,120,210,300} {\draw [very thick] (0,0) to (\x:2.5); \node [blackdot] at (\x:2.5) {};};
\node [blackdot] at (0.0:0.0) {};
\end{scope}
\node at (3.2,-1.0) {(a)};

\begin{scope}[xshift=8.5cm,yshift=0]
\foreach \i [evaluate={\ii=int(\i-1);}] in {0,...,6}{
  \foreach \j [evaluate={\jj=int(\j-1);}] in {0,...,3}{
    \coordinate [shift={(\j,\i)}] (n-\i-\j) at (rand*90:\j*1/15+\j*rnd/20);
\ifnum\i>0
  \draw [help lines] (n-\i-\j) -- (n-\ii-\j);
\fi
\ifnum\j>0
  \draw [help lines] (n-\i-\j) -- (n-\i-\jj);
  \ifnum\i>0
    \pgfmathparse{int(rnd>.5)}
    \ifnum\pgfmathresult=0
      \draw [help lines] (n-\i-\j) -- (n-\ii-\jj);
    \else%
      \draw [help lines] (n-\ii-\j) -- (n-\i-\jj);
    \fi%
  \fi
\fi
}}
\begin{scope}[shift=(n-3-1)]
\draw (0:-0.9) to (0:0.9); \node [dot] at (0:0.9) {}; \node [dot] at (0:-0.9) {};
\draw [very thick] (30:-1.0) to (30:1.0); \node [blackdot] at (30:1.0) {}; \node [blackdot] at (30:-1.0) {};
\draw (60:-1.8) to (60:1.8); \node [dot] at (60:1.8) {}; \node [dot] at (60:-1.8) {};
\draw (90:-2.5) to (90:2.5); \node [dot] at (90:2.5) {}; \node [dot] at (90:-2.5) {};
\draw [very thick] (120:-1.8) to (120:1.8); \node [blackdot] at (120:1.8) {}; \node [blackdot] at (120:-1.8) {};
\draw (150:-1.0) to (150:1.0); \node [dot] at (150:1.0) {}; \node [dot] at (150:-1.0) {};
\node [blackdot] at (0.0:0.0) {};
\end{scope}
\draw [-] (0.0,-0.5) -- (0.0,6.5);
\node at (0.3,-0.3) {$\partial \Ome$};
\end{scope};
\node at (10.2,-1.0) {(b)};
\end{tikzpicture}

\caption{Plot (a) shows a stencil of the discrete Hamiltonian where the \blue{finite} differences are spaced at angles of $\pi / 6$ and $m$ is about $2.5$. \blue{The black dots mark a single stencil, the white dots stencil positions of other angles.} Plot (b) illustrates how the \blue{finite} differences are rescaled near the boundary to ensure that the stencil does not extend out of the boundary. \blue{We illustrate here how $k = k(h,x,\sigma_j)$ can also be rescaled depending on the direction $\sigma_j$, noting that our analysis easily extends to this case.} }
\label{stencils}
\end{center}
\end{figure}

The compact control set is
\[
\bF = \left( {\rm SO}(2) \times \left\{ \begin{pmatrix} a & 0\\ 0 & 1-a \end{pmatrix}: a \in \left[0,{\textstyle \frac{1}{2}}\right) \right\} \right) \cup \left\{ \left( \Id, \begin{pmatrix} \frac{1}{2} & 0\\ 0 & \frac{1}{2} \end{pmatrix} \right) \right\}.
\]
In order to compute the numerical solutions we discretize the special orthogonal group ${\rm SO}(2)$ by considering only the rotation angles $i \pi / 64$, $i \in \{0, 1, \ldots, 63\}$, see \cref{stencils} (a) for an illustration of angles $i \pi / 6$. The stencil diameter $k$ is, away from the boundary, represented through $k = m \cdot h$ by a fixed \blue{positive factor} $m$ and the (average) mesh size $h$. \blue{Near the boundary, so where $m \cdot h$ is larger than the distance to $\partial \Omega_h$, the stencil is reduced in size to remain within $\Omega_h$, see \cref{stencils} (b).}
 
\begin{figure}
\begin{center}
\begin{tabular}{r|rr|rr|rr}
\multicolumn{7}{c}{quartic problem}\\[1mm]
DoFs & $L^2$-error & $m$ & $L^\infty$-error & $m$ & $H^1$-error & $m$ \\ \hline
91 & $6.92 \cdot 10^{-2}$ & $2$ & $9.26 \cdot 10^{-2}$ & $2$ & $1.86 \cdot 10^{-1}$ & $2$\\
329 & $2.99 \cdot 10^{-2}$ & $2$ & $3.85 \cdot 10^{-2}$ & $2$ & $9.08 \cdot 10^{-2}$ & $2$\\
1,249 & $1.69 \cdot 10^{-2}$ & $4$ & $2.15 \cdot 10^{-2}$ & $2$ & $4.93 \cdot 10^{-2}$ & $4$\\
4,865 & $7.12 \cdot 10^{-3}$ & $4$ & $9.59 \cdot 10^{-3}$ & $4$ & $2.28 \cdot 10^{-2}$ & $4$\\
19,201 & $4.18 \cdot 10^{-3}$ & $8$ & $5.63 \cdot 10^{-3}$ & $4$ &$1.28 \cdot 10^{-2}$ & $8$\\
76,289 & $1.78 \cdot 10^{-3}$ & $8$ & $2.44 \cdot 10^{-3}$ & $8$ &$5.82 \cdot 10^{-3}$ & $8$\\
304,129 & $1.06 \cdot 10^{-3}$ & $16$ & $1.51 \cdot 10^{-3}$ & $8$ & $3.36 \cdot 10^{-3}$ & $8$\\
1,214,465 & $4.82 \cdot 10^{-4}$ & $16$ & $6.59 \cdot 10^{-4}$ & $16$ & $1.59 \cdot 10^{-3}$ & $16$
\end{tabular}

\vspace*{3mm}

\begin{tabular}{r|rr|rr|rr}
\multicolumn{7}{c}{non-smooth problem}\\[1mm]
DoFs & $L^2$-error & $m$ & $L^\infty$-error & $m$ & $H^1$-error & $m$ \\ \hline
91 & $4.50 \cdot 10^{-2}$ & $4$ & $1.03 \cdot 10^{-1}$ & $4$ & $2.02 \cdot 10^{-1}$ & $2$\\
329 & $1.62 \cdot 10^{-2}$ & $4$ & $5.69 \cdot 10^{-2}$ & $4$ & $1.51 \cdot 10^{-1}$ & $4$\\
1,249 & $7.11 \cdot 10^{-3}$ & $8$ & $3.08 \cdot 10^{-2}$ & $8$ & $1.21 \cdot 10^{-1}$ & $8$\\
4,865 & $3.35 \cdot 10^{-3}$ & $16$ & $2.03 \cdot 10^{-2}$ & $16$ & $9.80 \cdot 10^{-2}$ & $16$\\
19,201 & $1.70 \cdot 10^{-3}$ & $32$ & $1.38 \cdot 10^{-2}$ & $32$ &$7.91 \cdot 10^{-2}$ & $32$\\
76,289 & $9.63 \cdot 10^{-4}$ & $32$ & $9.12 \cdot 10^{-3}$ & $32$ &$6.33 \cdot 10^{-2}$ & $32$\\
304,129 & $5.10 \cdot 10^{-4}$ & $64$ & $6.04 \cdot 10^{-3}$ & $64$ & $5.12 \cdot 10^{-2}$ & $64$\\
1,214,465 & $3.12 \cdot 10^{-4}$ & $64$ & $4.51 \cdot 10^{-3}$ & $64$ & $4.25 \cdot 10^{-2}$ & $64$
\end{tabular}
\end{center}
\caption{The second column shows the smallest relative $L^2$ error for a given grid across the \blue{factors} $m \in \{ 2, 4, 8, 16, 32, 64\}$, with the minimizing $m$ listed in the third column. The remaining columns are structured analogously.}
\label{tab:error}
\end{figure}

\begin{figure}
\begin{center}
\begin{tikzpicture}
    \begin{axis}[
        xlabel={Quartic problem: Number of refinements},
        ylabel={$\| u - u_h \|_{\infty} / \| u \|_{\infty}$},
        ymode=log,
        legend entries={$m=2$,$m=4$,$m=8$,$m=16$,$m=32$,$m=64$},
        legend pos=south west,
        x=1.2cm,y=0.75cm,
        ymin=0.0005, ymax=0.3,
        cycle list name=black white,
    ]

    \addplot plot coordinates {
        (0, 0.09264686) (1, 0.03848972) (2, 0.02147328) (3, 0.01595599) (4, 0.01387613) (5, 0.01299948) (6, 0.01260062) (7, 0.01241146) }; 
     \addplot plot coordinates {
        (0, 0.09476099) (1, 0.0730312 ) (2, 0.02412993) (3, 0.00959138) (4, 0.00563159) (5, 0.00452801) (6, 0.00418338) (7, 0.00405839) }; 
     \addplot plot coordinates {
        (0, 0.11693424) (1, 0.08072497) (2, 0.06444549) (3, 0.01997167) (4, 0.00606306) (5, 0.00243683) (6, 0.00150851) (7, 0.0012695 ) }; 
     \addplot plot coordinates {
        (0, 0.17739561) (1, 0.10839021) (2, 0.07419719) (3, 0.06175568) (4, 0.0186791 ) (5, 0.00506872) (6, 0.00154685) (7, 0.00065936) }; 
     \addplot plot coordinates {
        (0, 0.17727959) (1, 0.16839183) (2, 0.10242908) (3, 0.07215608) (4, 0.06045472) (5, 0.01824949) (6, 0.00477594) (7, 0.00128875) }; 
     \addplot plot coordinates {
        (0, 0.17727959) (1, 0.16825611) (2, 0.16441069) (3, 0.10050325) (4, 0.07109564) (5, 0.05979031) (6, 0.01808672) (7, 0.00468619) }; 
    \end{axis}
\end{tikzpicture}

\bigskip

\begin{tikzpicture}
    \begin{axis}[
        xlabel={Non-smooth problem: Number of refinements},
        ylabel={$\| u - u_h \|_{\infty} / \| u \|_{\infty}$},
        ymode=log,
        legend entries={$m=2$,$m=4$,$m=8$,$m=16$,$m=32$,$m=64$},
        legend pos=south west,
        x=1.2cm,y=0.75cm,
        ymin=0.0005, ymax=0.3,
        cycle list name=black white,
    ]

    \addplot plot coordinates {
        (0, 0.10296365) (1, 0.07818203) (2, 0.08125336) (3, 0.0816648 ) (4, 0.07860496) (5, 0.07660087) (6, 0.07549515) (7, 0.07487687) }; 
    \addplot plot coordinates {
        (0, 0.10296365) (1, 0.05692912) (2, 0.0497189 ) (3, 0.04769402) (4, 0.04583895) (5, 0.04460013) (6, 0.04364363) (7, 0.04304393) }; 
    \addplot plot coordinates { 
        (0, 0.11570551) (1, 0.05955971) (2, 0.03078344) (3, 0.02865993) (4, 0.02741087) (5, 0.02637367) (6, 0.02567876) (7, 0.02521068) }; 
    \addplot plot coordinates {
        (0, 0.233414  ) (1, 0.10945945) (2, 0.03919945) (3, 0.02029866) (4, 0.01570089) (5, 0.01509889) (6, 0.0140704 ) (7, 0.01356076) }; 
    \addplot plot coordinates {
        (0, 0.23590807) (1, 0.23491063) (2, 0.10710124) (3, 0.03358211) (4, 0.01377854) (5, 0.00911636) (6, 0.00753515) (7, 0.00693339) }; 
    \addplot plot coordinates {
        (0, 0.23590807) (1, 0.24051648) (2, 0.24970561) (3, 0.09912176) (4, 0.03142673) (5, 0.01151443) (6, 0.00603893) (7, 0.00450505) }; 
    \end{axis}
\end{tikzpicture}

\end{center}
\caption{Relative $L^\infty$-error for the test problem with quartic (above) and non-smooth (below) exact solution.}
\label{fig:rates}
\end{figure}
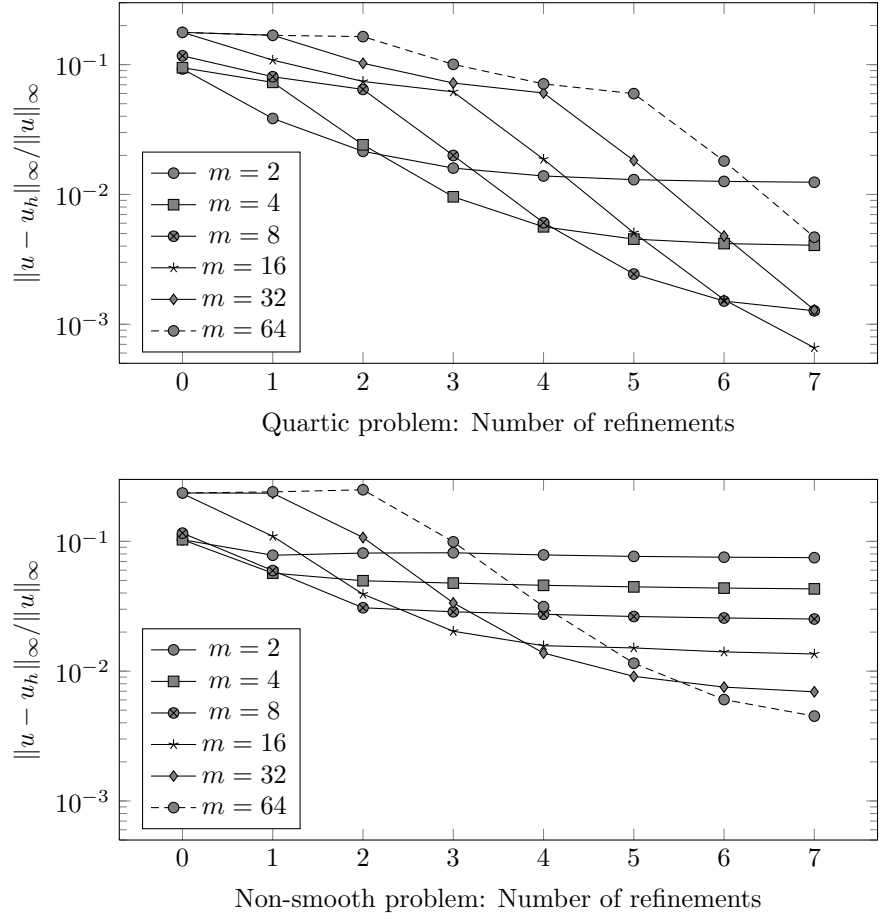
The relative errors in the $L^2$, $L^\infty$ and $H^1$ norms when approximating the quartic and non-smooth exact solution are summarized in the \cref{tab:error}.  The $L^\infty$-error graphs for different $m$ are plotted in \cref{fig:rates}. Across the seven levels of refinement the orders of convergence in $h$ and $k$ are, with $C$ representing generic constants:\\[-1mm]

\begin{center}
\begin{tabular}{l|ll|ll}
& \multicolumn{2}{c|}{quartic problem} & \multicolumn{2}{c}{non-smooth problem}\\ \hline
$\| u - u_h \|_{L^2}$ \rule{0pt}{2.3ex} & $\approx C h^{1.02}$ & $\approx C k^{1.79}$ & $\approx C h^{1.02}$ & $\approx C k^{2.39}$\\[0.5mm]
$\| u - u_h \|_{L^\infty}$ & $\approx C h^{1.02}$ & $\approx C k^{1.78}$ & $\approx C h^{0.64}$ & $\approx C k^{1.50}$\\[0.5mm]
$\| u - u_h \|_{H^1}$ & $\approx C h^{0.98}$ & $\approx C k^{1.72}$ & $\approx C h^{0.33}$ & $\approx C k^{1.22}$
\end{tabular}
\end{center}

\vspace*{2mm}

The number of Newton iterations in \cref{newton_iter} increases only moderately with the level of refinement and stencil size, so that fine meshes remain feasible on desktop computers. Importantly, Howard's algorithm displays a robust performance when approximating the non-smooth solution $|x_1|$ with $f = 0$; noting that the line $\{ x_1 = 0 \}$ where $|x_1|$ is non-differentiable is not aligned with the computational mesh. The iterations are started with the control $\mathbf{B}_0 = \frac{1}{d} \Id$. Due to global convergence, the starting iterate does not need to be guessed in close vicinity of the numerical solution. The stopping criterion is an iteration step size less than $10^{-6}$ in the $L^\infty$-norm.

\vspace*{1mm}
\begin{figure}
\begin{center}
\begin{tabular}{c|cccccc|cccccc}
& \multicolumn{6}{c|}{$m$ for quartic problem} & \multicolumn{6}{c}{$m$ for non-smooth problem}\\
refinement & $2 $ & $4$ & $8$ & $16$ & $32$ & $64$ & $2 $ & $4$ & $8$ & $16$ & $32$ & $64$\\ \hline
0 & \fbox{5} & 5 & 5 &  4 &  5 &  5 & 4 & \fbox{5} & 6 & 5 & 5 &  5\\
1 & \fbox{5} & 5 & 6 & 10 &  5 &  5 & 4 & \fbox{5} & 6 & 7 & 9 &  6\\
2 & \fbox{5} & 5 & 7 &  9 & 12 &  5 & 5 & 5 & \fbox{6} & 6 & 7 & 11\\
3 & 5 & \fbox{6} & 7 &  9 & 12 & 13 & 5 & 5 & 7 & \fbox{7} & 8 &  9\\
4 & 5 & \fbox{6} & 7 & 11 & 12 & 16 & 7 & 5 & 6 & 7 & \fbox{7} &  7\\
5 & 6 & 6 & \fbox{6} & 10 & 12 & 15 & 8 & 6 & 6 & 7 & \fbox{8} &  8\\
6 & 5 & 6 & \fbox{6} &  9 & 12 & 15 & 7 & 6 & 6 & 7 & 8 &  \fbox{8}\\
7 & 5 & 5 & 7 &  \fbox{8} & 10 & 14 & 8 & 5 & 7 & 7 & 8 &  \fbox{9}
\end{tabular}
\end{center}
\caption{Number of Newton iterations to achieve a Newton step size of less than $10^{-6}$. The boxes highlight the \blue{factor} $m$ which minimizes the $L^\infty$-error for a given level of refinement.}
\label{newton_iter}
\end{figure}

\bibliographystyle{siamplain}

\end{document}